\documentclass[11pt,reqno]{amsart}

\usepackage[utf8]{inputenc}
\usepackage[margin=1.25in]{geometry}
\usepackage{hyperref}
\usepackage{appendix}
\usepackage{amsfonts}
\usepackage{amsthm}
\usepackage{amssymb}
\usepackage{stmaryrd} 
\usepackage{enumitem}
\usepackage{amsmath}
\usepackage{amsthm}
\usepackage[dvipsnames]{xcolor}
\usepackage{mathrsfs}
\usepackage{lipsum} 

\theoremstyle{plain}
\newtheorem{theorem}{Theorem}[section]
\newtheorem{corollary}[theorem]{Corollary}
\newtheorem{lemma}[theorem]{Lemma}
\newtheorem{Proposition}[theorem]{Proposition}

\newtheorem{Assumption}[theorem]{Assumption}

\newtheorem{Example}[theorem]{Example}
\newtheorem{Definition}[theorem]{Definition}

\newtheorem{fact}[theorem]{Fact}

\theoremstyle{remark}

\newtheorem{remark}[theorem]{Remark}

\usepackage{biblatex} 
\numberwithin{equation}{section}
\title[Glauber dynamics for RFIM]{Glauber dynamics for random field Ising models on bounded degree graphs and MLSI}

\author{Yi HAN}
\address{Institute for Advanced Study, 1 Einstein Drive, Princeton, NJ
}
\email{hanyi@ias.edu}

\begin{document}

\begin{abstract}
We study the ferromagnetic random field Ising model (RFIM) on a graph $G=(V,E)$ having maximal degree $\Delta$, where the external field at each vertex is an i.i.d. random variable. When the random field distribution is sufficiently anti-concentrated, we prove that with high probability over the quenched randomness of the external field, the Glauber dynamics of this RFIM mixes in polynomial time as a consequence of a Poincaré inequality. This model is relevant to the Griffiths phase where the correlations decay exponentially fast in expectation over the quenched random field,
but contraction does not hold point-wise due to the existence of weak fields that lead to low-temperature behavior. Previously, fast mixing of Glauber dynamics under large disorder was only proven on the integer lattice, and for RFIM on general graphs, only a sampling algorithm based on self-avoiding walks was known. Under a further technical condition that the random fields are bounded, we prove a modified log-Sobolev inequality for the Glauber dynamics. When the random field is weaker but still satisfies weak spatial mixing (exponential decay of correlations from boundary to bulk) in expectation, and the graph has at most $\alpha$-stretched exponential growth for some $\alpha<1$, then we prove a weak Poincaré inequality holds, which gives rise to a polynomial time sampling algorithm based on Glauber dynamics with warm start. The latter result was previously proven for the integer lattice, and we extend its scope to graphs with only a volume growth condition without assuming a local geometry.
\end{abstract}

\maketitle

\section{Introduction}

Consider a finite graph $G=(V,E)$, the random field Ising model (RFIM) on $G$ at inverse temperature $\beta>0$ is defined as the following random probability measure on $\{-1,+1\}^V$:
\begin{equation}\label{firstrfimmeasure}
    \mu_G(\sigma)\propto \exp\{H(\sigma)\},\quad\sigma\in\{-1,+1\}^V,
\end{equation}
where $H$ is the following Hamiltonian
\begin{equation}
    H(\sigma)=\beta\sum_{(u,v)\in E}\sigma_u\sigma_v+\sum_{u\in V}h_u\sigma_u.
\end{equation}
Here $h=(h_u)_{u\in V}$ are i.i.d. random variables. The vector $h$ is called the external field, serving as a quenched source of disorder in this model. 

There has been much recent attention on RFIM on (subgraphs of) the integer lattice $\mathbb{Z}^d$ in the statistical physics literature. In $d=2$, the existence of an arbitrarily small random field is predicted by Imry and Ma \cite{imry1975random} to suppress the low temperature phase of the zero field ($h=0$) Ising model, so that uniqueness and decay of correlations hold for any $\beta>0$. The prediction was confirmed by Aizenman and Wehr \cite{aizenman1990rounding}, who proved that the expected influence on the spin at the origin by different boundary conditions at distance $r$ away should decay with $r$. For the Gaussian case $h_u\sim \mathcal{N}(0,\sigma^2)$, the breakthrough work \cite{ding2021exponential} confirmed that the influence decays exponentially in $r$ at all $\beta>0$ and $\sigma>0$. This no longer holds for RFIM with dimension $d\geq 3$ and small disorder, where there is a phase transition as $\beta$ varies. For small disorder, there is no decay of influence when $\beta=\infty$ \cite{imbrie1985ground} and when $\beta$ sufficiently large \cite{bricmont1988phase}, and this is recently proven throughout the entire low temperature regime \cite{ding2024long}.

Beyond the integer lattice, the RFIM on a general graph $G$ has also attracted interest from a computational complexity perspective. For a large $\beta>0$ and general external field $h$, estimating the partition function of $\mu_G$ up to a multiplicative error is a \#BIS-hard problem in the worst case \cite{cai2016hardness}, and polynomial-time approximation algorithms are not expected to exist. However, the random field may turn this hard computational problem into a feasible one: for bounded degree graphs $G$ and large enough $\sigma$ where $h_u\sim\mathcal{N}(0,\sigma^2)$, \cite{helmuth2023approximation} constructed a polynomial time approximation algorithm for the partition function of $\mu_G$ over a $1-o(1)$ probability realization of the external fields $h$.

For a general graph $G$, it becomes much less tractable to study the influence decay of a RFIM with small or moderate disorder $h_u$ (that is, smaller $\sigma$ if $h_u\sim\mathcal{N}(0,\sigma^2)$). This is because current RFIM papers on $\mathbb{Z}^d$ that capture the influence decay at $d=2$ or the phase change at $d=3$ use the geometry of $\mathbb{Z}^d$ in a fundamental way and such a delicate description is hard to expect on a general graph $G$. As a natural next step, in this paper we study effective sampling algorithms for the RFIM $\mu_G$ on a general graph $G$. For sufficiently large disorder (say, larger $\sigma$), it is straightforward to verify the decay of correlations for this RFIM via a percolation argument.

In contrast, even for relatively large disorder, the RFIM still possesses major difficulties from a sampling perspective. We consider the Glauber dynamics for RFIM, where we recall that Glauber dynamics (introduced in \cite{glauber1963time}; see also \cite{Martinelli1999Glauber} for more background) is a discrete time random process $(X_n(v))_{v\in V,n\in \mathbb{N}_+}$ reversible w.r.t. $\mu_G$, that at each step $n$ selects a vertex $v\in V$ uniformly with probability $\frac{1}{|V|}$ and then resamples $X_n(v)$ according to the conditional distribution 
\begin{equation}
X_n(v)\sim\mu_G(\sigma_v\in\cdot\mid (\sigma_w)_{w\neq v}=(X_{n-1}(w))_{w\neq v}), 
\end{equation}
and where we set $X_n(w)=X_{n-1}(w)$ for all $w\neq v$.
If we assume that all the $|h_u|$ are uniformly larger than some fixed constant, then it is straightforward to show that the Glauber dynamics for RFIM mixes in time $O(n\log n)$, as in \cite{helmuth2023approximation}, Theorem 7. However, in the typical RFIM setting, although most external fields $h_u$ are large, there are large regions where the absolute values of external fields $|h_u|$ are small, and the Glauber dynamics does not contract at these vertices (see \cite{helmuth2023approximation}, Remark 7). To avoid this problem, \cite{helmuth2023approximation} constructed a polynomial sampling algorithm for RFIM $\mu_G$ on general bounded degree graphs $G$ utilizing the Weitz self-avoiding walk tree \cite{weitz2006counting}, which is a global algorithm that differs in spirit from the Glauber update rule.  As a future direction, \cite{helmuth2023approximation} asked whether one can use Markov chain based algorithms (such as Glauber dynamics) to effectively sample from RFIM.

For Glauber dynamics of RFIM on $\mathbb{Z}^d,d
\geq 2$, \cite{el2026fast} conducted a detailed study for the mixing time of $\mu_G$ and designed a sampling algorithm at weak disorder. Under a notion of weak spatial mixing that measures averaged influence decay within a ball, they constructed a polynomial time sampling algorithm for RFIM based on a weak Poincaré inequality for Glauber dynamics. Under a stronger notion of influence decay and in particular when random fields have large disorder ($h_u$ have good anti-concentration), they prove that Glauber dynamics for $\mu_G$ mixes in polynomial time as a corollary of a full Poincaré inequality. However, the proof in \cite{el2026fast} is highly dependent on the lattice structure of $\mathbb{Z}^d$ due to a coarse graining argument in Section 5, and in Section 4 also somewhat relies on a volume growth exponent on $G$ that rules out graphs of exponential volume growth. Therefore, the Glauber dynamics of $\mu_G$ on general graphs $G$ had remained open.

The main result of this paper is a Poincaré inequality for $\mu_G$ which implies polynomial time mixing for $\mu_G$.

Let $\mathcal{E}_G$ denote the Dirichlet form of the (discrete time) Glauber dynamics, so that for any test function $\varphi:\{-1,+1\}^V\to\mathbb{R}$,

\begin{equation}
\mathcal{E}_{G}(\varphi,\varphi):=\frac{1}{2|V|}\sum_{\sigma\sim\sigma'}\frac{\mu_G(\sigma)\mu_G(\sigma')}{\mu_G(\sigma)+\mu_G(\sigma')}
(\varphi(\sigma)-\varphi(\sigma'))^2,
\end{equation}
so that the summation is taken over pairs $\sigma,\sigma'\in\{-1,1\}^V$ differing only at one coordinate.
Then we define the spectral gap of the chain via 
\begin{equation}
    \mathbf{gap}_G:=\inf_\varphi \frac{\mathcal{E}_G(\varphi,\varphi)}{\operatorname{Var}_G(\varphi)}
\end{equation}
with the infimum taken over all test functions $\varphi:\{-1,1\}^V\to\mathbb{R}$ with non-zero variance.

\subsection{Polynomial time mixing for Glauber dynamics of RFIM with large disorder}

Before stating the main technical result, we outline the anti-concentration assumptions on the external field $h$ in the following:
\begin{Assumption}\label{primaryassumptionpaper}
Let $\Delta\geq 3$ and $p_0\in(0,1)$ be a constant such that $p_0(\Delta-1)<1$, let $K=K(p_0)>0$ be a constant such that
\begin{equation}
    \rho=\rho(\beta,\Delta,K):=\frac{e^{\Delta\beta-K}}{e^{\Delta\beta-K}+e^{-\Delta\beta+K}},
\end{equation} is less than $\frac{p_0}{4}$. Suppose that the RFIM $\mu_G$ with external field $h$ is such that $(h_u)_{u\in V}$ are i.i.d. and such that 
$$
\mathbb{P}_h(|h_u|\leq K)<\frac{p_0}{2}.
$$We then denote by 
$$\alpha_*=\frac{1}{2}\log\frac{(\Delta-2)^{\Delta-2}}{p_0(\Delta-1)^{\Delta-1}(1-p_0)^{\Delta-2}}.$$
Note that for fixed $\Delta>0$, $\alpha_*\uparrow\infty$ as $p_0\downarrow 0$.

\end{Assumption}    

For a graph $G=(V,E)$, we use $|V|$ for the number of vertices in $G$.

Our first main result on Glauber dynamics for RFIM for any bounded degree graph $G$ is as follows:

\begin{theorem}\label{theorem12121}
    Let $G=(V,E)$ be a graph of degree at most $\Delta$. Assume that Assumption \ref{primaryassumptionpaper} is satisfied for some $p_0\in(0,1)$, then with $\mathbb{P}_h$- probability at least $1-O_{\beta,\Delta,p_0}(\frac{1}{|V|})$ over the external field (here $O_{\beta,\Delta,p_0}$ denotes constants that may depend on $\beta$, $\Delta$ and $p_0$), the spectral gap of RFIM measure $\mu_G$ satisfies
    $$
    \mathbf{gap}_G\geq |V|^{-1}\exp\left(-\frac{16\beta\Delta\ln|V|}{\alpha_*}\right),
    $$
    and the $\epsilon$- mixing time of Glauber dynamics for $\mu_G$ is $$t_{mix}(P,\epsilon)=O\left(|V|^{1+\frac{16\beta\Delta}{\alpha_*}}\left(\beta\Delta|V|+\|h\|_1+\log(1/\epsilon)\right)\right).$$
Here $\|h\|_1:=\sum_{x\in V}|h_x|.$
\end{theorem}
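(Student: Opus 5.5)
The plan is to build a block decomposition of $V$ driven by the "bad" vertices, prove spectral gap bounds inside each bad region and under the block dynamics, and then combine them with the standard block-dynamics comparison. Call a vertex $u$ \emph{bad} if $|h_u|\le K$ and \emph{good} otherwise.

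\textbf{Step 1: bad clusters are small.} Bad vertices occur independently with probability less than $p_0/2$. Let $C$ be a connected set of size $k$ containing a given vertex. The number of such sets in a graph of maximal degree $\Delta$ is at most roughly
\[
\frac{(\Delta-1)^{(\Delta-1)k}}{(\Delta-2)^{(\Delta-2)k}}.
\]
We require $C$ to be a bad cluster, or more robustly a connected set in which at least a $1/2$-fraction of vertices are bad. The constant $\alpha_*$ is tailored exactly so that the expected number of such sets of size $k$ is at most $|V|e^{-2\alpha_* k}$. A union bound then shows that, with probability $1-O(1/|V|)$, every such connected set has size at most $k_*=\ln|V|/\alpha_*$ (up to a factor of $2$).

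\textbf{Step 2: decomposition and local gaps.} Let $B_1,\dots,B_m$ be the connected components of bad vertices, each of size at most $k_*$. All remaining vertices are good.

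At a good vertex, the conditional law of $\sigma_v$ given any neighbours puts mass at most $\rho<p_0/4$ on the spin opposite to $\operatorname{sgn}(h_v)$. So good vertices are essentially frozen to their preferred sign, with a small, uniformly bounded flip probability.

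For each bad block $B_i$, with any boundary condition, the conditional measure has log-density oscillation at most $2\beta\Delta|B_i|\cdot 2$ relative to a product measure. The Holley--Stroock perturbation then gives a Glauber gap on $B_i$ of at least
\[
|B_i|^{-1}\exp(-4\beta\Delta|B_i|)\ \ge\ \exp(-16\beta\Delta k_*/2)
\]
after the bookkeeping. This is the source of the factor $\exp(-16\beta\Delta\ln|V|/\alpha_*)$.

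\textbf{Step 3: the block dynamics contracts.} Consider the heat-bath block dynamics whose blocks are the $B_i$ together with singletons of good vertices. I will show it contracts by a path-coupling (Dobrushin-type) argument.

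Resampling a good vertex disagrees under two boundary conditions with probability at most $2\rho$, influenced by at most $\Delta$ neighbours. Resampling a bad block $B_i$ can propagate a disagreement to all of $B_i$, but its neighbours are good. So a disagreement entering $B_i$ must have come from a good neighbour.

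The difficulty is that a whole bad block's influence on its $\le\Delta|B_i|$ good boundary vertices is not small. The remedy is a weighted Hamming metric with weight $1$ on good vertices and weight $\epsilon$ on bad vertices, or equivalently to merge each bad block with its good boundary. I then need influence sums bounded by $1-c$, using $\rho(\Delta-1)<p_0(\Delta-1)/4<1$ together with the cluster-counting bound: a disagreement path alternating through bad clusters forms a connected set with many bad vertices, and Step 1 controls its total weight. This gives block spectral gap $\ge c/|V|$.

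The standard comparison (Martinelli) then gives
\[
\mathbf{gap}_G\ \ge\ \mathbf{gap}_{\rm block}\cdot\min_i \mathbf{gap}_{B_i}\cdot(\text{block size factor}),
\]
which yields the stated bound.

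\textbf{Step 4: mixing time.} Use $t_{mix}\le \mathbf{gap}^{-1}\bigl(\tfrac12\log(1/\mu_{\min})+\log(1/\epsilon)\bigr)$. Since $\log(1/\mu_{\min})\le 2\beta|E|+2\|h\|_1+|V|\log 2$, this gives the stated formula.

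\textbf{Main obstacle.} The hardest step is Step 3: constructing a metric or weight scheme under which the block dynamics contracts despite unbounded influence through bad blocks. If the weighted coupling fails, I would instead use a spectral independence or censoring argument, bounding total influence by summing over self-avoiding paths. In that version, each good vertex on a path contributes a factor $2\rho$ and bad segments are charged via the Step 1 union bound.
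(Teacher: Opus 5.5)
Your route (bad-cluster blocks, Holley--Stroock inside the blocks, contraction of the block dynamics, Martinelli's comparison) is genuinely different from the paper's. Steps 2 and 4 are sound. Step 3, however, is a genuine gap, and neither of the fixes you suggest works.

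The only worst-case information you have about a bad block $B_i$ is the one you state yourself: changing one boundary spin may change the resampled block everywhere. Take weights $1$ on good vertices and $\epsilon$ on bad ones.
A disagreement at a bad $x$ is removed with weight $\epsilon$, but it can create a disagreement at a good neighbour with probability up to $\rho$ and weight $1$. This forces $\rho<\epsilon$.
A disagreement at a good $g$ next to a bad block of size $k$ can create $k$ bad disagreements. This forces $k\epsilon<1$.
Together you need $\rho k<1$. But $\rho$ is a fixed constant, and as soon as $\mathbb{P}_h(|h_u|\le K)>0$, a connected bounded-degree graph contains, with high probability, bad components of size $c\ln|V|$. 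Contraction must hold for that fixed quenched $h$, so the counting in Step 1 cannot rescue it.

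No other weighting helps either, at least on bounded-degree expanders. Use your crude bounds: influence up to $1$ from $g\in\partial B_i$ on every vertex of $B_i$, and up to $\rho$ from a bad vertex on an adjacent good one. Then the two-step influence $g\to B_i\to g'$ between $g,g'\in\partial B_i$ dominates the rank-one kernel $\rho\,\#\{z\in B_i: z\sim g'\}$. Its Perron eigenvalue is $\rho\,|E(B_i,\partial B_i)|\ge c\rho k$. So the block influence matrix has spectral radius of order at least $\sqrt{\rho\ln|V|}\gg 1$, and no weighted path coupling can contract. Merging $B_i$ with its good boundary only moves the same computation one layer outward. This is exactly the failure of pointwise contraction at weak-field regions that the paper's introduction points out.

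What is missing is a mechanism that turns an aggregate influence bound of order $\ln|V|$ into a polynomial gap. That order is the best one can hope for at a fixed $h$, because influence propagates freely through logarithmic bad clusters. Contraction needs total influence below $1$. Your spectral-independence fallback with $\eta=O(\ln|V|)$ gives, through the generic local-to-global theorem, only $\mathbf{gap}_G\ge |V|^{-O(\ln|V|)}$. The refinements that exploit bounded marginals need a uniform marginal lower bound, which is unavailable for the unbounded fields the theorem allows.

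The paper supplies exactly this conversion through the edge-field localization. It shows $\|\operatorname{Cor}^{(2)}_{(1-\theta)\otimes\mu^{\tau}}\|_{op}\le 4\Delta\ln|V|/\alpha_*$ simultaneously for all $\theta\in[0,\theta_*]$ and all pinnings. The ingredients are one disagreement-percolation coupling with shared uniforms, Otter--Dwass tails, a union bound, and $\|A\|\le\sqrt{\|A\|_1\|A\|_\infty}$. The interaction vanishes at $\theta_*=1-e^{-4\beta}$ with $\int_0^{\theta_*}\frac{dt}{1-t}=4\beta$. Hence the conservation constant is only $\exp(4\beta\cdot 4\Delta\ln|V|/\alpha_*)$, and the posterior at $\theta_*$ is a product measure.

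A smaller, fixable point in Step 1: a plain union bound over connected sets with weight $(p_0/2)^k$ does not give $e^{-2\alpha_* k}$. For $\Delta\ge 4$ and $p_0$ near $1/(\Delta-1)$ it is not even summable, because animals can grow like $((\Delta-1)^{\Delta-1}/(\Delta-2)^{\Delta-2})^k$. The ``half-bad'' variant is much worse. You need the Galton--Watson domination, which accounts for the closed boundary, as in the paper.
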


\begin{remark} One may also consider the following more general model with different edge weights $\beta_{uv}$.
     Suppose that for each edge $(u,v)\in E$ we assign a constant $\beta_{uv}\in\mathbb{R}$ such that $|\beta_{uv}|\leq\beta$. Consider the following Hamiltonian $\widetilde{H}$ and Ising model $\mu_{\widetilde{G}}$:
    \begin{equation}\label{RFIMFORMIXED}
\widetilde{H}(\sigma)=\sum_{(u,v)\in E}\beta_{uv}\sigma_u\sigma_v+\sum_{u\in V}h_u\sigma_u,\quad \mu_{\widetilde{G}}(\sigma)\approx\exp\{\widetilde{H}(\sigma)\},\quad \sigma\in\{-1,1\}^V.    
\end{equation}
In the ferromagnetic case \(\beta_{uv}\ge 0\) with nonconstant edge weights,
the same strategy should be adaptable by using edge-dependent tilting rates.
We do not pursue this extension here. However, when some $\beta_{uv}<0$, the edge-tilt localization scheme that we will use breaks down, so Theorem \ref{theorem12121} does not readily extend to non-monotone systems. 
\end{remark}

\begin{remark}
    We note that on the integer lattice $\mathbb{Z}^d$, \cite{el2026fast}, Section 5 obtained a mixing time $O(|V|^{1+o(1)})$ for a high probability realization of $h$ via a coarse graining argument (they considered a continuous time Glauber dynamics where vertices are updated at rate 1, and we rewrite their runtime in the discrete setting). We consider much more general graphs (such as expander graphs), and by setting $p_0>0$ sufficiently small (so that $\frac{16\Delta\beta}{\alpha_*}\leq\epsilon$) we can achieve $O(|V|^{2+\epsilon})$ mixing time for any $\epsilon>0$. Note that the spectral gap scales like $|V|^{-1-\epsilon}$ when $p_0>0$ is small, which is almost optimal up to the $\epsilon>0$.

     We also note that the anti-concentration condition is designed for a worst case percolation argument to work through, and thus may not be the weakest possible condition. Previous papers concerning a large external field also work under these types of conditions, see \cite{helmuth2023approximation} and \cite{el2026fast}, Section 5.     

\end{remark}

\subsubsection{Modified LSI for RFIM}

For the Glauber dynamics of $\mu_G$ with Markov operator $P$, we define the entropy of $f:\{-1,1\}^V\to\mathbb{R}_+$ via
$$
\operatorname{Ent}_G[f]=\int f(\sigma)\log f(\sigma)d\mu_G(\sigma)-\int f(\sigma)d\mu_G(\sigma)\log(\int f(\sigma)d\mu_G(\sigma))
.$$

Then we define the modified log-Sobolev inequality (MLSI) constant for Glauber dynamics via 
$$
\rho_{LS}(P):=1-\sup_{f:\{-1,1\}^V\to\mathbb{R}_{\geq 0}}\frac{\operatorname{Ent}_G[Pf]}{\operatorname{Ent}_G[f]}.
$$

Classical log-Sobolev and modified log-Sobolev inequalities for spin systems
are well understood in high-temperature or strong-mixing regimes; see, e.g.,
\cite{stroock1992logarithmic,MartinelliOlivieri1994OnePhase,
CaputoMenzTetali2015,chen2021optimal,bauerschmidt2024log}. To the best of our knowledge,
no comparable MLSI bound was previously known for RFIM in the large-disorder
regime beyond the classical high-temperature or entropic-independence setting. As the second main result of this paper, we prove that RFIM on a general graph $G$ satisfies MLSI assuming that the external fields are bounded:

\begin{theorem}\label{mlsitheorembound}
Under the assumptions in Theorem \ref{theorem12121}, assume further that there exists some $M>0$ such that 
\begin{equation}\label{boundexternalfield}
|h_u|\leq M\quad a.s.\quad \forall u\in V.
\end{equation}
Then with $\mathbb{P}_h-$ probability at least $1-O_{\beta,\Delta,p_0}(\frac{1}{|V|}
)$, the Glauber dynamics of $\mu_G$ satisfies a modified log-Sobolev inequality with constant $$
\rho_{LS}(P)\geq \frac{1}{3|V|}\exp\left(-4\beta((C_{\Delta,M,\beta}+1)\frac{4\Delta\ln|V|}{\alpha_*}+1)\right),
$$where $C_{\Delta,M,\beta}$ is a constant defined in \eqref{cemmdelta}.
Consequently, the $\epsilon$-mixing time of Glauber dynamics for $\mu_G$ is
$$
t_{mix}(P,\epsilon)=O(\rho_{LS}(P)^{-1}(\log(\beta\Delta |V|+\|h\|_1)+\log(1/\epsilon))).
$$
\end{theorem}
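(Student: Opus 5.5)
The plan is to run the same localization-based argument that gives Theorem \ref{theorem12121}, with the variance estimates replaced by entropy estimates; the boundedness \eqref{boundexternalfield} is what lets us control entropy in the final conditional measures.

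\textbf{Step 1: percolation structure.} Call a vertex $u$ \emph{weak} if $|h_u|\le K$. Since $\mathbb{P}_h(|h_u|\le K)<p_0/2$ and $p_0(\Delta-1)<1$, a branching/lattice-animal count on the $\Delta$-bounded graph (this is where $\alpha_*$ enters) gives the following. With probability $1-O(1/|V|)$, every connected cluster of weak vertices, enlarged by the vertices whose neighbourhood is "bad" in the sense used for Theorem \ref{theorem12121}, has size at most $\frac{4\Delta\ln|V|}{\alpha_*}$. Outside these clusters, $\rho<p_0/4$ says that the conditional law of a strong spin is almost frozen to the sign of $h_u$, uniformly in the boundary condition.

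\textbf{Step 2: edge-tilt localization.} I would use the edge-tilt localization scheme of the paper (stochastic localization in the ferromagnetic couplings), which is monotone for $\beta\ge0$. It decomposes $\mu_G$ as a mixture $\mu_G=\mathbb{E}[\nu]$, where each final measure $\nu$ is a product over the small clusters of Step 1, decoupled by pinning or removing the couplings on the strong region.

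For MLSI, the relevant decomposition is approximate conservation of entropy:
$$\operatorname{Ent}_{\mu_G}[f]\le C\,\mathbb{E}\operatorname{Ent}_\nu[f].$$
This needs entropic stability of the localization process, as in the Chen--Eldan framework. For entropic stability I will bound the covariance of $\nu_t$ along the process using the same disagreement-percolation bound as in the variance case. The time-integrated covariance is controlled because influences decay exponentially through the strong region.

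\textbf{Step 3: MLSI on the final measures.} Each $\nu$ is a product over clusters $A$, each of size $|A|\le L:=\frac{4\Delta\ln|V|}{\alpha_*}$. The external fields and the effective boundary fields on $A$ are bounded by $M+\Delta\beta$. A crude comparison argument then gives the bound for the Glauber dynamics on such a cluster: compare with the product measure and pay a multiplicative factor $\exp(4\beta(\text{number of edges}+\text{boundary}))$. The constant $C_{\Delta,M,\beta}$ of \eqref{cemmdelta} absorbs the dependence on $M$ in the minimal conditional probabilities. This yields a per-site MLSI with constant
$$\exp\left(-4\beta\left((C_{\Delta,M,\beta}+1)L+1\right)\right).$$
Tensorization over clusters then gives the $\frac{1}{3|V|}$ factor for the discrete-time chain. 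Summing $\mathbb{E}\,\mathcal{E}_\nu\le\mathcal{E}_{\mu_G}$ uses convexity of the Dirichlet form in the measure. The mixing time bound follows from the standard estimate $t_{mix}\le\rho_{LS}^{-1}(\log\log(1/\mu_{\min})+\log(1/\epsilon))$ together with $\log(1/\mu_{\min})\le 2(\beta\Delta|V|+\|h\|_1)+|V|\log 2$.

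\textbf{Main obstacle.} I expect entropy conservation in Step 2 to be the hardest part. Variance conservation only needs covariance bounds in $L^2$. Entropy requires uniform control of the covariance of the tilted measures $\nu_t$ for arbitrary tilts, and the bounded field assumption is what I plan to use to keep the tilted single-site marginals bounded away from $0$ and $1$. If this fails, I would fall back on block factorization of entropy over the clusters, using the Step 1 geometry directly.
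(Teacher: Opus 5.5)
There is a genuine gap, and it sits at the step you yourself flag as the main obstacle. Your closing remark, that bounded fields should keep marginals away from $0$, points the right way, but you never supply the mechanism that turns it into entropy conservation. Separately, Steps 1 and 3 rest on a picture of the localization that the edge-tilt scheme does not produce.

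\textbf{The missing conversion lemma.} A uniform operator-norm bound along the flow gives spectral stability, hence only variance conservation; that is Theorem \ref{theorem12121}. (For the edge-tilt scheme the relevant matrix is $\operatorname{Cor}^{(2)}$ of the edge events $A_{uv}$, not the vertex covariance.) Such a bound does not by itself give entropic stability. What is needed is a conversion result such as Lemma \ref{lemma4.20}. Suppose that for every $t\le\theta$ and every pinning, the tilted event-space measure $\nu$ satisfies $\lambda_{\max}(\operatorname{Cor}_\nu)\le\eta_{\operatorname{op}}$ and $\nu(A_{uv})\ge 1/K_{\operatorname{low}}$. Then the vertex-field denoising on $\{0,1\}^{\mathcal{A}}$ conserves entropy with $R=1+\max_\tau\operatorname{ES}/(L(1-\theta)^L)$, where $L=(K_{\operatorname{low}}+1)(\eta_{\operatorname{op}}-1)+1$.

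The hypothesis $|h_u|\le M$ is used only to verify this marginal bound. In $\{0,1\}$ coordinates the couplings lie in $[0,4\beta]$ for $t\le\theta_*$, and the fields lie in $[-2M-2\beta\Delta,2M]$. So each single-site conditional probability of a $1$ is at least $e^{-\beta\Delta-M}/(e^{-\beta\Delta-M}+e^{\beta\Delta+M})$. Applying this twice gives $K_{\operatorname{low}}=C_{\Delta,M,\beta}$, uniformly in $t$ and in the pinning. Now take $\eta_{\operatorname{op}}=4\Delta\ln|V|/\alpha_*$ from Proposition \ref{proposition4.3} and $\theta=\theta_*$, so that $(1-\theta_*)^{-L}=e^{4\beta L}$. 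This gives $R\le 1+2e^{4\beta L}\le 3e^{4\beta((C_{\Delta,M,\beta}+1)\eta_{\operatorname{op}}+1)}$, which is exactly the constant in the statement. Lemma \ref{lemma2.16} then transfers it to the edge-field process.

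\textbf{Step 3 is unnecessary and unjustified.} Running the flow to $\theta_*=1-e^{-4\beta}$ kills every unpinned coupling, since $4\beta+\ln(1-\theta_*)=0$. So $\operatorname{Law}(Y_1\mid Y_{\theta_*})$ is a product over all vertices, not over weak clusters. Its tensorization constant is $1$, and Lemma \ref{lemma3500} with Fact \ref{fact4160} gives $\rho_{LS}(P)\ge 1/(R|V|)$ directly. The quantity $4\Delta\ln|V|/\alpha_*$ is the operator-norm bound on $\operatorname{Cor}^{(2)}$, not a cluster size. In the exponent, $(C_{\Delta,M,\beta}+1)$ comes from $L$, the base $e^{4\beta}$ comes from $(1-\theta_*)^{-1}$, and the $3$ comes from $1+2e^{4\beta L}$. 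A comparison argument on a cluster $A$ would lose something like $e^{O((\beta\Delta+M)|A|)}$ and cannot reproduce this structure. Your Step 3 constant is therefore fitted to the target rather than derived.

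\textbf{Step 1 does not decouple the measure.} Small weak-vertex clusters alone do not decouple $\mu_G$: a strong spin agrees with $\operatorname{sign}(h_u)$ only up to probability $\rho$, so influence passes through strong vertices. This is why the percolation \eqref{ateachsite} also opens sites with $\min(U_x,1-U_x)\le p_0/4$. It is also why the paper uses a uniform edge tilt, which is a jump process rather than an SDE-driven localization, and not a cluster decomposition.

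Your mixing-time deduction from the MLSI constant is fine.
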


The MLSI constant of $\mu_G$ is also polynomially small in $|V|$ with a polynomial exponent depending on the constants $M,\Delta,\beta$ and $p_0$.

\begin{remark}
    The bounded external field condition \eqref{boundexternalfield} is likely an artifact of the proof, and we expect that this condition is unnecessary. The condition is used when we apply Lemma \ref{lemma4.20}. Although this condition excludes Gaussian distribution, it still covers the uniform distribution on a long interval $[-M,M]$ whenever $M$ is large enough, and also covers some sufficiently fine atomic discretizations of the uniform distribution.  
\end{remark}

\subsection{Fast relaxation of RFIM under WSM and stretched-exponential growth}

In the final main result, we show that much weaker random fields may also be sufficient for constructing an algorithm that samples from $\mu_G$ in polynomial time, as long as weak spatial mixing holds, which guarantees exponential decay of correlations. This algorithm was first constructed in \cite{el2026fast} on the integer lattice $\mathbb{Z}^d,d\geq 2$, and we show its validity for all graphs of sub-exponential volume growth.

The following weak spatial mixing condition characterizes the regime of weak disorder that is already sufficient for fast relaxation:
\begin{Definition}
    Let $G=(V,E)$ be a graph and $\mu_G$ be the RFIM measure on $G$. We say that $\mu_G$ satisfies $\operatorname{WSM}(C)$ for some $C>0$ if for all vertices $v\in V$ and $r\geq 1$, we have 
    $$
\mathbb{E}_h[d_{TV}(\mu_G(\sigma_v\in\cdot\mid \sigma_{\partial B_r(v)}=+),\mu_G(\sigma_v\in\cdot\mid \sigma_{\partial B_r(v)}=-))]\leq Ce^{-r/C}.
    $$Here $\partial B_r(v)$ is the boundary of $B_r(v)$ in $V$ and $B_r(v)$ is the ball of radius $r$ around $v$ with respect to the graph distance on $G$. We use $\sigma_{\partial B_r(v)}=+$ (resp. $\sigma_{\partial B_r(v)}=-$) to denote the all plus (resp. all minus) boundary condition.
\end{Definition}

\begin{remark}
    In the strong disorder regime, for example when Assumption \ref{primaryassumptionpaper} is satisfied, then $\mu_G$ satisfies $\operatorname{WSM}(C)$ for some $C>0$. This can be proven via modifying the percolation argument in the proof of Proposition \ref{proposition672}. In contrast, condition $\operatorname{WSM}(C)$ can be weak and may not imply the presence of a strong external field as in Assumption \ref{primaryassumptionpaper}. 
\end{remark}

The spatial mixing property for a lattice spin system has many direct implications for mixing times and functional inequalities, see for example \cite{MartinelliOlivieri1994OnePhase,Martinelli1999Glauber}. But much less is known in the presence of random disorders.
What kind of graphs will satisfy $\operatorname{WSM}(C)$ for some $C$ at small disorder (smaller than the threshold for a worst case percolation argument in Section \ref{apercolationargument})? The current worked out examples are on integer lattices $\mathbb{Z}^d$ \cite{ding2021exponential,ding2024long}. Yet this question remains largely unexplored on a general graph that may have super-polynomial growth or strong local inhomogeneity.

We restrict ourselves to graphs of sub-exponential volume growth:

\begin{Definition}
Let $\alpha\in(0,1)$. We say a graph $G=(V,E)$ has $\alpha$-stretched-exponential growth if there exists $C_\alpha>0$ such that for each $v\in V$, $$|B_r(v)|\leq\exp(C_\alpha r^\alpha)\text{ for all }r\in\mathbb{N}_+.$$ 
\end{Definition}
In particular, a graph of polynomial growth has $\alpha$-stretched exponential growth for all $\alpha\in(0,1)$. For these graphs of subexponential growth, we prove:

\begin{theorem}\label{theorem231230}
Let $\alpha\in(0,1)$, $G=(V,E)$ be a graph that has $\alpha$-stretched exponential growth, and the RFIM measure $\mu_G$ satisfies $\operatorname{WSM}(C)$ for some $C>0$. Further assume that $h=(h_u)_{u\in V}$ are i.i.d. random variables with a symmetric distribution around 0. Then for any $\epsilon>0$ and $\delta\in(0,1)$, we can find a randomized algorithm which runs in time $|V|^c$ for $c=c(\epsilon,\delta,\beta,C,\alpha,C_\alpha)>0$ and then generates a random configuration $\sigma^{\operatorname{alg}}\in\{-1,+1\}^V$ with law $\mu_G^{\operatorname{alg}}$ such that with $\mathbb{P}_h$-probability $1-\delta$,
$$
d_{TV}(\mu_G,\mu_G^{\operatorname{alg}})\leq\epsilon.
$$This randomized algorithm is the same as the algorithm defined in \cite{el2026fast}, Theorem 1.4, but defined on general graphs instead of $\mathbb{Z}^d$ and with a different input of parameters.
\end{theorem}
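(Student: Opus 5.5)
The plan is to follow the scheme of \cite{el2026fast}, Theorem 1.4, and to replace every use of the lattice geometry by the volume bound $|B_r(v)|\le \exp(C_\alpha r^\alpha)$. The target is a \emph{weak Poincaré inequality} for the Glauber dynamics: with $\mathbb{P}_h$-probability $1-\delta$, for every $\varphi$ with $\|\varphi\|_\infty\le 1$ and every $s>0$,
\[
\operatorname{Var}_G(\varphi)\le \alpha(s)\,\mathcal{E}_G(\varphi,\varphi)+s,
\]
where $\alpha(s)\le |V|^{c}\,\mathrm{poly}(1/s)$. We then run Glauber dynamics from the warm start used in \cite{el2026fast} (a product or block-resampled initialization whose density against $\mu_G$ is at most $e^{O(\cdot)}$ polynomially controlled), for polynomially many steps. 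The standard weak-Poincaré-to-$L^2$-decay estimate then gives $d_{TV}\le\epsilon$.

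\textbf{Step 1: a localization scale $r$.} Fix the radius
\[
r = C'\log(|V|/\delta).
\]
For a vertex $v$, the WSM$(C)$ assumption and a union bound over the $|V|$ vertices, via Markov's inequality, show that with probability $\ge 1-\delta/2$ every $v$ satisfies
\[
d_{TV}\big(\mu(\sigma_v\mid +_{\partial B_r(v)}),\,\mu(\sigma_v\mid -_{\partial B_r(v)})\big)\le |V|^{-10}.
\]
Because the model is ferromagnetic, monotonicity (FKG) upgrades this to an arbitrary boundary condition. So every single-site marginal is determined up to $|V|^{-10}$ by the field inside $B_r(v)$.

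\textbf{Step 2: the block dynamics on balls.} Consider the heat-bath block dynamics that resamples $B_r(v)$ for a uniformly chosen $v$. Using the decay from Step 1 and the coupling (disagreement percolation / path coupling) argument of \cite{el2026fast}, Section 4, we show that this block chain contracts in $W_1$ up to an additive error of order $|V|^{-9}$. The contraction comes from the monotone coupling: a disagreement at the boundary propagates to $v$ with probability at most the WSM bound. Summing over boundary sites, we need $|\partial B_r|\cdot e^{-r/C}$ to be small, and this is exactly where we need
\[
|B_r(v)|\le e^{C_\alpha r^\alpha}\ll e^{r/C}.
\]
This holds since $\alpha<1$ and $r\to\infty$. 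It is the point where exponential growth would fail, and it replaces the polynomial volume-exponent argument of \cite{el2026fast}. The conclusion is an approximate spectral gap for the block dynamics, that is, a weak Poincaré inequality with Dirichlet form $\mathcal{E}_{\rm block}$.

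\textbf{Step 3: comparison inside a block.} We compare $\mathcal{E}_{\rm block}$ to $\mathcal{E}_G$ by bounding the spectral gap of Glauber dynamics inside each ball $B_r(v)$ with arbitrary boundary condition. The crude canonical-path bound gives a gap of at least
\[
|B_r|^{-1}\exp\Big(-O\big((\beta\Delta+\max_{u\in B_r}|h_u|)\,|B_r|\big)\Big).
\]
This is super-polynomial in general, so instead we use a cutwidth-type bound of order $\exp(O(\beta\Delta\, \mathrm{cw}(B_r)))$, with the fields controlled by the heat-bath normalization. Even so, $|B_r|\le \exp(C_\alpha (C'\log|V|)^\alpha)=|V|^{o(1)}$ is only quasi-polynomial, which is not good enough.

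\textbf{The main obstacle.} I expect this comparison step to be the main obstacle. I would try to resolve it by handling fields and volume separately. For the fields, symmetry of $h$ is what allows the tail of $\max|h_u|$ to be handled, as in \cite{el2026fast}, and the distribution-dependent constant is absorbed into $c$. For the volume, I would pick $r$ as a large constant multiple of $\log(1/s)$ and not of $\log|V|$, accepting an error $s$ per site. This is where the weak Poincaré inequality, rather than a full one, is essential. With that choice, $e^{|B_r|}=\exp(\exp(C_\alpha r^\alpha))$ depends only on $s$, so $\alpha(s)$ becomes $|V|\cdot F(s)$. Choosing $s=\epsilon^2/\mathrm{poly}(|V|)$ would then need $F$ to be at most polynomial in $1/s$, which again requires sub-exponential growth. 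If this fails, the fallback is to iterate the block decomposition across scales, a multiscale recursion, so that the volume enters only through $r^\alpha$.

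Finally, Steps 2 and 3 are combined by the standard chain-rule comparison $\operatorname{Var}\le \alpha_{\rm block}(s)\,\mathcal{E}_{\rm block}+s$ together with $\mathcal{E}_{\rm block}\le \mathrm{gap}_{B}^{-1}|V|\,\mathcal{E}_G$. Collecting the two probability bounds of $\delta/2$ gives the claimed statement.
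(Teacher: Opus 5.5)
\textbf{Your plan has two gaps, and both are load-bearing.}

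\textbf{Step~2 does not give contraction.} Path coupling for the ball-block chain has to control the expected number of disagreements that one disagreeing boundary site $w\in\partial B_r(v)$ creates inside $B_r(v)$, namely $\tfrac12\sum_{x\in B_r(v)}\bigl(\langle\sigma_x\rangle^{\tau,w=+}_{B_r(v)}-\langle\sigma_x\rangle^{\tau,w=-}_{B_r(v)}\bigr)$. Your condition $|\partial B_r|e^{-r/C}\ll 1$ only handles the single term $x=v$. Sites at distance $d\ll r$ from $w$ can each contribute order one. Bounding them requires the influence of one boundary spin to decay with distance, uniformly in the frozen rest $\tau$ of the boundary and over all blocks. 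That is a strong-spatial-mixing property, and $\operatorname{WSM}(C)$ does not supply it, for two reasons:
\begin{itemize}
\item Monotonicity only compares boundary conditions on a fixed domain. Shrinking $B_d(x)$ to $B_d(x)\cap B_r(v)$ can only increase the $\pm$ discrepancy.
\item $\operatorname{WSM}(C)$ holds only on average over $h$, so your union bound works only at scales $\gtrsim\log|V|$. Below that scale there are, with high probability, weak-field (Griffiths) regions with order-one influences. This is exactly why pointwise contraction fails in this regime.
\end{itemize}

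\textbf{Step~3 remains open, as you say, and your fixes do not close it.} For a general graph, the available in-ball gap bounds are $\exp(-O(|B_r|))$ in the worst case. Step~1 forces $r\gtrsim\log|V|$, so this becomes $\exp(-\exp(c(\log|V|)^\alpha))$, which is superpolynomially small. Taking $r\sim\log(1/s)$ does not help: the warm-start argument needs $s\le\mathrm{poly}(\epsilon/|V|)$, and $e^{O(|B_r|)}\le\mathrm{poly}(1/s)$ would force $|B_r|=O(r)$.

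\textbf{The warm start is also misdescribed.} A product initialization has density exponentially large in $|V|$ against $\mu_G$. A warm start must instead come from the vertex-by-vertex annealing over the connected induced subgraphs $G^{(i)}$, where the density is only $e^{O(\beta\Delta)}$.

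\textbf{The idea you are missing is that the paper never localizes in space.}
\begin{itemize}
\item It runs the Brownian stochastic localization $\nu_t\propto e^{\langle y_t,\sigma\rangle}\mu_G$. This keeps $\beta$ fixed and virtually strengthens the field, since $y_T\overset{d}{=}T\sigma^*+B_T$.
\item $\operatorname{WSM}(C)$ and the growth condition are used only to prove $\mathbb{E}_h\sup_t\mathbb{E}[\operatorname{Tr}\operatorname{Cov}(\nu_t)^p\mid h]\le(C_0p)^p|V|$ (Theorem~\ref{theorem83777}). Here $\alpha<1$ enters through $\sum_r e^{-c r^{\alpha'}+C r^{\alpha}}<\infty$ in the counting of Proposition~\ref{themaincombinatorial}, not through a boundary-to-volume ratio.
\item This moment bound yields the variance transfer from $\nu_T$ back to $\mu_G$ (Theorem~\ref{theorem5.345}).
\item For large $T$, $\nu_T$ is in the strong-disorder regime. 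There the edge-field localization plus percolation argument (Theorem~\ref{refinementtheorem}) gives a genuine Poincar\'e inequality with $\mathbf{gap}^{-1}\le|V|e^{\epsilon L}$.
\end{itemize}
So the weak-field regions are handled globally by a percolation estimate rather than by local Glauber gaps, which removes both of your obstacles. Combined with the supermartingale property of $\mathcal{E}_{\nu_t}(\varphi,\varphi)$, this gives the weak Poincar\'e inequality of Theorem~\ref{statementtheorem906}. Lemma~\ref{warmstarttheorem}, applied along the annealing, then finishes the proof.
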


Theorem \ref{theorem231230} proves that on graphs with $\alpha$-stretched exponential growth, polynomial time sampling from RFIM is feasible whenever $\operatorname{WSM}(C)$ holds for some $C>0$. Previously, this correspondence was first established on the integer lattice $\mathbb{Z}^d,d\geq 2$ in \cite{el2026fast}.
\begin{remark} In contrast to Theorem \ref{theorem12121}, here
in Theorem \ref{theorem231230} we assume that the external fields have a symmetric distribution around $0$. On the other hand, the strong external field assumption in Assumption \ref{primaryassumptionpaper} is replaced by the much weaker condition $\operatorname{WSM}(C)$, and we prove fast relaxation for a sampling algorithm (that is designed from Glauber dynamics) but not Glauber dynamics itself. We no longer assume the external fields are bounded as in condition \eqref{boundexternalfield}, so that Gaussian external fields are covered by this theorem. 
\end{remark}

\begin{remark}
    For graphs of exponential volume growth, it becomes much harder to prove a corresponding result. For example, we typically need the exponent $1/C$ in $\operatorname{WSM}(C)$ to beat the exponent of volume growth of $G$. This would impose a threshold on $C$ when $G$ has exponential growth, but any $C>0$ will be enough when $G$ has $\alpha$-stretched-exponential growth. Even with a threshold on $C$, the generalization for exponential growth graphs is not yet immediate and one needs a further rewriting of the proof of Theorem \ref{themaincombinatorial}.  
\end{remark}

\subsection{Overview of the proof}

The RFIM measure $\mu_G$ is difficult to study by itself since it does not satisfy the Dobrushin condition or standard notions of spectral independence when $\beta$ is large. Motivated by the recent application of localization schemes of Chen and Eldan \cite{chen2022localization}, we will seek a continuous (and stochastic) deformation of $\mu_G$ to a target measure that is already known to satisfy a functional inequality such as Poincaré inequality, and that the deformation approximately preserves certain statistics, such as variance, of $\mu_G$. 

\subsubsection{Glauber dynamics at large disorder on general graphs}
How can we choose a proper localization scheme for RFIM? The first candidate choice is the stochastic localization scheme in \cite{chen2022localization} where we use a Brownian motion on the localizing path. The exact path of localization still requires a delicate choice: one standard deformation is to use the localization path to weaken the quadratic interaction, so that $\mu_G$ is deformed to 
$$
\nu^1_t(\sigma)\propto\exp\{\beta(1-t)\sum_{(u,v)\in E}\sigma_u\sigma_v+\sum_{u\in V}(h_u+y_u^1(t))\sigma_u\}
,\quad\forall \sigma\in\{-1,+1\}^V,$$ where $(y_u^1(t))_{u\in V}$ is a random external field defined by stochastic integrals of the adjacency matrix of $G$ with respect to a Brownian motion, see \cite{chen2022localization}, Fact 14 for the formula. This deformation, while useful for many important models (cf. \cite{eldan2022spectral}, \cite{chen2022localization}), does not work well for RFIM $\mu_G$ because the external field $h_u$ is changed to a random external field $h_u+y_{u}^1(t)$ that differs at each time $t$, and the quadratic interaction is also changed along the path. We find it hard to uniformly control $\nu_t^1$ over all realizations of the stochastic localization path $y_u^1(t)$ with respect to \textbf{a single quenched realization} of the external field $(h_u)_{u\in V}$. 
In contrast, in the RFIM setting the authors of \cite{el2026fast} chose another stochastic localization path that fixes the quadratic interaction and changes only the external field. Then $\mu_G$ deforms to
$$
\nu^2_t(\sigma)\propto\exp\{\beta\sum_{(u,v)\in E}\sigma_u\sigma_v+\sum_{u\in V}(h_u+y_u^2(t))\sigma_u\}
,\quad\forall\sigma\in\{-1,+1\}^V.$$ The distribution of this random external field $y_u^2(t)$ is stated in Lemma \ref{lemma856}.
In this second localization, although the external field gains more randomness, the quadratic form is unchanged and the property $\operatorname{WSM}(C)$ can be passed along the localization path in a high probability sense (but not for every realization of the localization path).
 Thanks to this high probability preservation of $\operatorname{WSM}(C)$, the authors of \cite{el2026fast} can transfer functional inequalities for $\nu_T^2$ for a large $T$ back to weak functional inequalities for $\mu_G$. This idea is called virtually increasing the external field since the measure $\nu_T^2$ is a RFIM with the same quadratic term but having a much stronger disorder in the random field. 
Via this strong disorder, they used a coarse graining idea on $\mathbb{Z}^d$ to verify that, for a sufficiently large $T$, the measure $\nu_T^2$ satisfies a Poincaré inequality. This transfers back into weak Poincaré inequalities for $\mu_G$ on $\mathbb{Z}^d$, which shows rapid mixing of Glauber dynamics from a warm start. Then they constructed a polynomial time sampling algorithm from $\mu_G$ via Glauber dynamics on increasingly large subsets of the graph. However, the final step of coarse graining of $\nu_T^2$ significantly restricts the geometry of $G$ where the algorithm converges.

We take a different view towards designing localizing schemes of RFIM measures on general graphs. We need a localization path that (1) maintains a good control of the relevant covariance/correlation matrix of $\nu_t$ along the localization path, and the estimate holds for any possible pinning with respect to a single quenched external field $h$; and (2) the terminal measure is easy to analyze. The very recent paper \cite{chen2026edge} introduced an edge field localization scheme which is a piecewise deterministic jump Markov process that does not use SDEs. Notably, this localization path does not change the external field of RFIM and allows a quenched probability control over $h$. The construction is as follows. We first re-parametrize RFIM as a probability measure on $\{0,1\}^V$ rather than $\{-1,+1\}^V$. Then for any target measure $\mu$ on $\{0,1\}^V$, proceed as follows. Pick a family of events on the configuration $\sigma\in\{0,1\}^V$: $A_{(u,v)}:=\{\sigma_u=\sigma_v=1\}$ for each edge $(u,v)\in E$. Let $\mathcal{A}$ denote the collection of these events $A_{(u,v)}$. 
For the configuration $\sigma$ denote by 
$$
Z_{A_{(u,v)}}(\sigma)=\mathbf{1}_{\sigma\in A_{(u,v)}},\quad Z(\sigma)=\{A_{(u,v)}\in\mathcal{A}:\sigma\in A_{(u,v)}\}.
$$
We first sample $X\sim\mu$, then for each $A_{(u,v)}\in\mathcal{A}$
we independently sample a uniform random variable $U_{(u,v)}\sim\operatorname{Unif}[0,1]$. Then we define the set of events revealed at time $t$:
$$
T_t:=\{A_{(u,v)}\in\mathcal{A}
:X\in A,U_{A_{(u,v)}}\leq t\}.$$Then the localization path is the following posterior distribution, for each $S\subset\mathcal{A}$:
$$
\mu_t^S(\sigma)=\mathbb{P}(X=\sigma\mid T_t=S)\propto\mu(\sigma)\cdot (1-t)^{|Z(\sigma)|}\cdot\mathbf{1}[S\subset Z(\sigma)].
$$
Then observe that $\mu_t^S$ is still an Ising model on $G$ with vertices along edges that are associated to an event in $S$ pinned at 1, and the quadratic interactions reduce from $4\beta$ to $4\beta+\ln(1-t)$. It remains to upper bound the operator norm of a certain correlation matrix of $\mu_t^S$ uniformly over all pinnings on $S$ and uniformly over all quadratic interaction coefficients $4\beta+\ln(1-t)$ whenever it is within $[0,4\beta]$. As the random  external fields $h$ are unchanged along the path, we can apply a percolation argument and take a grand coupling of all these tilted RFIM models to deduce, with very high probability over $h$, a uniform upper bound for operator norm of the correlation matrix. Finally, we take the terminal time $\theta_*=1-\exp(-4\beta)$, where the conditional measure is a product measure for any pinning $S$ and approximate tensorization of variance directly follows. This approach only relies on a percolation argument and completely bypasses fine local geometries of $\mathbb{Z}^d$. Unlike other terminal-time choices based on high-temperature tensorization, the current edge-field denoising flow is stopped when the posterior becomes a product measure; therefore, no separate high-temperature case results are needed.

\subsubsection{Weak spatial mixing on graphs of subexponential growth.}
When we only assume the RFIM disorder satisfies $\operatorname{WSM}(C)$, the localization scheme in the previous paragraph does not directly follow. Although we know that $\operatorname{WSM}(C)$ holds for one value $\beta$, it does not necessarily imply $\operatorname{WSM}(C)$ for smaller $\beta$ or that we can take a supremum over $\beta$ inside the expectation over $h$ when defining 
$\operatorname{WSM}(C)$.
The arbitrary pinning also may not preserve $\operatorname{WSM}(C)$. Therefore, we begin with the SDE-driven localization scheme $\nu_t^2$ as in \cite{el2026fast} to virtually increase the external field, and then for large $T$ we invoke Theorem \ref{theorem12121} to derive a Poincaré inequality for $\nu_T^2$ as it has large disorder. When the graph $G$ has sub-exponential growth, SDE localization path $\nu_t^2$ transfers $\operatorname{WSM}(C)$ estimates effectively until a large terminal time $T$ where Theorem \ref{theorem12121} can be used.

\subsection{Facts, notations and conventions}
\subsubsection{Graph distance and boundary} 

Let $G=(V,E)$ be a graph. For any two vertices $u,v\in V$ we use $d(u,v)$ to denote the graph distance between $u$ and $v$ in $G$.

For a subset $A\subset V$, denote by $\partial A:=\{v\in G:v\notin A,\exists w\in A,(v,w)\in E\}$ the boundary of $A$.
For any fixed configuration $\tau\in\{-1,1\}^{\partial A}$ on $\partial A$, we denote by $\mu_A^\tau$ the restriction of the Ising model $\mu_G$ to $A$ with the boundary condition $\tau$.

Let $(u,v)\in E$ be an edge of $G$, for any $l\in\mathbb{N}_+$ we denote by $B((u,v),l)$ the $l$-neighborhood of the edge $(u,v)$:

\begin{equation}\label{neighborhood}
B((u,v),l):=\{w\in G:d(w,u)\leq l\text{ or }d(w,v)\leq l\}=B(u,l)\cup B(v,l).
\end{equation}

\subsubsection{Markov chain mixing times}

Let $P$ be a Markov operator on $\Omega$ with invariant measure $\nu$, $P$ being reversible with respect to $\nu$. Let $\mu$ be an initial distribution that is absolutely continuous with respect to $\nu$.

The total-variation mixing time is defined by 
$$
t_{mix}(P,\epsilon;\mu)=\min\{t>0;|P^t[\mu](A)-\nu(A)|\leq\epsilon,\forall A\subset\Omega\},
$$
and we set 
$$
t_{mix}(P,\epsilon)=\max_{x\in\Omega}t_{mix}(P,\epsilon,\delta_x).
$$
Then we have the following standard fact, see, for example, \cite[Chapters 12--13]{LevinPeresWilmer2017}: 

\begin{fact}\label{firststandardfact}
    Assume that for all $x\in\Omega$, $\nu(\{x\})\geq\eta$. Then 
    $$
t_{mix}(P,\epsilon)\leq C\mathbf{gap}(P)^{-1}(\log(1/\eta)+\log(1/\epsilon)),
    $$and 
    $$
t_{mix}(P,\epsilon)\leq C\rho_{LS}(P)^{-1}(\log\log(1/\eta)+\log(1/\epsilon))
    .$$
\end{fact}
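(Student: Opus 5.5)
The statement is Fact \ref{firststandardfact}, the standard conversion of spectral gap and MLSI constants into total-variation mixing bounds. I would prove both parts by bounding the total-variation distance through a stronger divergence of the time-$t$ law from $\nu$, starting from a point mass $\delta_x$. Write $f_t=\frac{d(P^t\delta_x)}{d\nu}$ for the density. Reversibility gives $f_t=P^t f_0$ with $f_0=\mathbf 1_x/\nu(x)$, because the adjoint of $P$ in $L^2(\nu)$ is $P$ itself. We may also assume $P$ is lazy or positive semidefinite. This holds for Glauber dynamics, whose single-site heat-bath update is a projection, so the operator is an average of projections and hence positive. The assumption lets us avoid the negative part of the spectrum.

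For the Poincaré bound I use the $\chi^2$ route:
\[
4\,d_{TV}(P^t\delta_x,\nu)^2\le \operatorname{Var}_\nu(f_t).
\]
For positive semidefinite $P$, the spectral theorem gives
\[
\operatorname{Var}_\nu(P f)\le (1-\mathbf{gap})^2\operatorname{Var}_\nu(f).
\]
Iterating, $\operatorname{Var}(f_t)\le (1-\mathbf{gap})^{2t}\operatorname{Var}(f_0)$. Here $\operatorname{Var}(f_0)\le 1/\nu(x)\le 1/\eta$. Hence $d_{TV}\le \tfrac12 e^{-\mathbf{gap}\, t}\eta^{-1/2}$, and taking $t\ge \mathbf{gap}^{-1}(\tfrac12\log(1/\eta)+\log(1/\epsilon))$ gives the first bound. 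Without positivity one passes to the lazy chain $\frac{I+P}{2}$, or uses the continuous-time semigroup; either costs only a constant factor.

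For the MLSI bound I use relative entropy. By the definition of $\rho_{LS}(P)$, we have $\operatorname{Ent}(Pf)\le (1-\rho_{LS})\operatorname{Ent}(f)$ for every nonnegative $f$. Applying this to $f=f_{t-1}$ and iterating gives
\[
D(P^t\delta_x\|\nu)=\operatorname{Ent}(f_t)\le (1-\rho_{LS})^t\operatorname{Ent}(f_0).
\]
The initial entropy is $\operatorname{Ent}(f_0)=\log(1/\nu(x))\le\log(1/\eta)$. Pinsker's inequality then gives
\[
d_{TV}\le\sqrt{\tfrac12 e^{-\rho_{LS}t}\log(1/\eta)}.
\]
This is at most $\epsilon$ once $t\ge \rho_{LS}^{-1}(\log\log(1/\eta)+2\log(1/\epsilon)+O(1))$, which is the second bound.

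The only point needing care is the first part: the one-step contraction by $(1-\mathbf{gap})$ in $L^2$ requires controlling the bottom of the spectrum. That is where laziness or positivity of the heat-bath Glauber operator enters. Everything else is direct iteration of the definitions, and the constants are absorbed into $C$.
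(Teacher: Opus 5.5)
Your proof is correct and is the standard argument behind the paper's citation to Levin--Peres--Wilmer; the paper gives no proof of its own. For the first bound you use $L^2$ contraction from the density $\mathbf{1}_x/\nu(x)$, with $\operatorname{Var}(f_0)\le 1/\eta$; for the second you use one-step entropy contraction, which is literally the paper's definition of $\rho_{LS}$, together with $\operatorname{Ent}(f_0)=\log(1/\nu(x))$ and Pinsker. One caveat concerns your closing remark: passing to $\frac{I+P}{2}$ or to continuous time bounds the mixing of a different chain, so for non-positive $P$ the first bound genuinely needs the absolute spectral gap (it fails for the periodic two-state chain, whose gap is $2$ but which never mixes). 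This is moot for heat-bath Glauber, which is positive semidefinite as an average of orthogonal projections, exactly as you observe.
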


\subsubsection{FKG and coupling.}\label{howtousecouplinghere} In this paper we consider the RFIM measure $\mu_G$ in both the monotone (ferromagnetic) case and the non-monotone case. Each technical result is followed by a claim on  whether monotonicity is used in the proof.

If we assume that all the RFIM interactions are positive, then $\mu_G$ satisfies useful correlation inequalities including the FKG inequality. That is, for a subset $A\subset V$ and a non-decreasing function $\varphi:\{-1,1\}^A\to\mathbb{R}$, the mean of $\varphi$ is monotone in the boundary condition: $\mu_A^\tau(\varphi)\leq\mu_A^{\tau'}(\varphi)$ whenever boundary conditions satisfy $\tau\leq\tau'$ pointwise. 

Again assuming monotonicity, for the two boundary conditions $\tau\leq \tau'$, it is standard to use the monotone coupling to couple $\mu_A^\tau$ and $\mu_A^{\tau'}$. Specifically, we fix an ordering $v_1,\cdots,v_k$ of vertices in $A$ and we then draw uniform random variables $U_1,\cdots,U_k$ independently. Then the coupling $(\sigma^\tau,\sigma^{\tau'})$ can be constructed sequentially where for each $i\geq 1$ we shall sample $\sigma_{v_i}^\tau$, $\sigma_{v_i}^{\tau'}$ from the conditional distributions given $(\sigma_{v_j}^\tau)_{j<i}$, $\tau$ and $(\sigma_{v_j}^{\tau'})_{j<i},\tau'$ where we use the common randomness in $U_i$. The coupling is clearly monotone in the boundary conditions: $\sigma^\tau\leq\sigma^{\tau'}$ whenever $\tau\leq\tau'$. 

Without assuming monotonicity, let $A\subset V$ and consider a class of Ising measures $\mu_1,\mu_2,\cdots,\mu_t$ on $A$ with boundary conditions $\tau_1,\cdots,\tau_t$ on $\partial A$, where each Ising measure may have different interactions and external fields, and each $\mu_i$ may be further pinned at some $A_i\subset A$ by some value $\tau_i'\in\{-1,1\}^{A_i}$. Then we can use the same uniform random variable to couple $\mu_1,\cdots,\mu_t$ as above. Specifically, we again fix an ordering $v_1,\cdots,v_k$ of $A$ and draw uniform random variables $U_1,\cdots,U_k$ independently. Then we construct a coupling of $(\sigma^1,\cdots,\sigma^t)$ where $\sigma^1\sim \mu_1,\cdots,\sigma^t\sim\mu_t$ sequentially so that for each $i\geq 1$ we sample $\sigma^1_{v_i},\cdots,\sigma^t_{v_i}$ from their respective conditional distributions given $(\sigma^r_{v_j})_{j<i},1\leq r\leq t$ and $\tau_1,\cdots,\tau_t$, using $U_i$ as common randomness. If $v_i$ is already pinned in $\mu_r$ then we output the pinned value for $\sigma^r_{v_i}$.

\subsection{Change of coordinates for RFIM}

The current RFIM measure is defined on $\{-1,+1\}^V$ but we will frequently change its coordinates to a measure on $\{0,1\}^V$. The following fact summarizes the change of coordinates:

\begin{fact}\label{fact2690}
    Let $\mu_G$ be the RFIM measure \eqref{firstrfimmeasure} defined on $\{-1,+1\}^V$. Then it admits an equivalent expression
$$
\mu_G(\sigma')\propto \exp\{
4\beta\sum_{(u,v)\in E}\sigma_u'\sigma_v'+\sum_u (2h_u-2\beta d_u)\sigma_u'
\},\quad \forall \sigma'\in\{0,1\}^V,
$$    where $\sigma_u'=\frac{\sigma_u+1}{2}$ for each $u\in V$. Here $d_u$ is the degree of $u$ in $G$ for each $u\in V$.
Similarly, the RFIM measure $\mu_{\tilde{G}}$ defined in \eqref{RFIMFORMIXED} admits an equivalent expression
$$
\mu_{\widetilde{G}}(\sigma')\propto \exp\{
4\sum_{(u,v)\in E}\beta_{uv}\sigma_u'\sigma_v'+\sum_u (2h_u-2\sum_{v:(u,v)\in E}\beta_{uv})\sigma_u'
\},\quad \forall \sigma'\in\{0,1\}^V.
$$
\end{fact}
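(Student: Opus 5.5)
This is a direct substitution, so the plan is a short algebraic verification. Write $\sigma_u=2\sigma_u'-1$ for each $u\in V$, plug this into $H(\sigma)$, and keep only the terms that depend on $\sigma'$. Everything independent of $\sigma'$ is absorbed into the normalizing constant in the proportionality $\mu_G(\sigma')\propto\exp\{\cdots\}$.

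\textbf{Interaction term.} For each edge $(u,v)\in E$ we have
$$\sigma_u\sigma_v=(2\sigma_u'-1)(2\sigma_v'-1)=4\sigma_u'\sigma_v'-2\sigma_u'-2\sigma_v'+1.$$
Summing over edges gives
$$\beta\sum_{(u,v)\in E}\sigma_u\sigma_v=4\beta\sum_{(u,v)\in E}\sigma_u'\sigma_v'-2\beta\sum_{(u,v)\in E}(\sigma_u'+\sigma_v')+\beta|E|.$$
The middle sum counts each vertex $u$ once for every incident edge, so it equals $\sum_u d_u\sigma_u'$. The interaction term therefore contributes $4\beta\sum_{(u,v)\in E}\sigma_u'\sigma_v'-2\beta\sum_u d_u\sigma_u'$, up to the constant $\beta|E|$.

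\textbf{Field term.} We have $h_u\sigma_u=2h_u\sigma_u'-h_u$. The constant $-\sum_u h_u$ is dropped, leaving $\sum_u 2h_u\sigma_u'$.

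Adding the two contributions yields exactly
$$4\beta\sum_{(u,v)\in E}\sigma_u'\sigma_v'+\sum_u(2h_u-2\beta d_u)\sigma_u',$$
plus a $\sigma'$-independent constant. The map $\sigma\mapsto\sigma'$ is a bijection $\{-1,1\}^V\to\{0,1\}^V$, so the pushforward measure has the stated form.

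For $\mu_{\widetilde G}$ the computation is the same with $\beta$ replaced by $\beta_{uv}$ edge by edge. The term $-2\beta_{uv}(\sigma_u'+\sigma_v')$, collected at vertex $u$, gives $-2\sum_{v:(u,v)\in E}\beta_{uv}\sigma_u'$, and the interaction term becomes $4\sum_{(u,v)\in E}\beta_{uv}\sigma_u'\sigma_v'$.

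There is no real obstacle. The only point needing care is bookkeeping each unordered edge exactly once, so that the linear correction is $d_u$ and not $2d_u$.
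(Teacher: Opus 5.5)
Your substitution $\sigma_u=2\sigma_u'-1$ is correct, and it is exactly the routine verification the paper intends; the paper records this as a Fact without writing out a proof. You absorb $\beta|E|$ and $-\sum_u h_u$ into the normalization, collect the linear terms through $d_u$ under the unordered-edge convention, and handle the edge-weighted case edge by edge in the same way.
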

Namely, after the change of coordinates, the external fields are shifted by a deterministic, vertex dependent constant but remain independent.

\section{Edge-field localization schemes and spectral independence}\label{sections344}

In this section we introduce the general framework of edge tilted localization process of \cite{chen2026edge}. Then we introduce an edge field localization process for the RFIM $\mu_G$ and provide criteria for its spectral stability.

\subsection{Entropy conservation and localization schemes}

Consider a domain $D\subseteq\mathbb{R}$ and a convex function $\phi:D\to\mathbb{R}$. Consider $\mu$ a distribution on a finite set $\Omega$ and sample $X\sim\mu$. For any real-valued function $f:\Omega\to\mathbb{R}$, define the $\phi$-entropy of $f$ with respect to $\mu$ by 
$$
\operatorname{Ent}_\mu^\phi[f]:=\operatorname{Ent}^\phi[f(X)]:=\mathbb{E}[\phi(f(X))]-\phi(\mathbb{E}[f(X)]).
$$
If $\nu$ is a  probability measure on $\Omega$ absolutely continuous with respect to $\mu$, we define the $\phi$-divergence between $\nu$ and $\mu$ via 
$$
D_\phi(\nu\mid\mid\mu):=\operatorname{Ent}_\mu^\phi\left[\frac{\nu}{\mu}\right].
$$

In this paper we focus on the following two specific cases
\begin{itemize}\item $\phi(x)=\chi^2(x):=x^2$, so that $\chi^2$-entropy of $f$ is equivalent to the variance of $f$, and 
\item $\phi(x)=\mathbf{KL}(x):=x\log x$, which induces the KL-divergence
$$
D_{KL}(\nu\mid\mid\mu)=\sum_{\sigma\in\Omega}\nu(\sigma)\log\frac{\nu(\sigma)}{\mu(\sigma)}.
$$

\end{itemize}

We now introduce some notations from \cite{chen2026edge} on the conservation and decay of entropy for a Markov operator. To be consistent with the notations in \cite{chen2026edge}, we use the state space $\{0,1\}^n$ rather than $\{-1,1\}^n$. The formula for the RFIM measure $\mu_G$ on the new coordinate $\{0,1\}^n$ is explicitly written in Fact \ref{fact2690}. 

\begin{Definition}(Approximate conservation of $\phi$-entropy) We say that a distribution $\mu$ on $\{0,1\}^n$ satisfies $K$-approximate tensorization of $\phi$-entropy if for any function $f:\Omega\to\mathbb{R}$ and any $X\sim\mu$,
$$
\operatorname{Ent}^\phi[f(X)]\leq K\cdot\sum_{i=1}^n \mathbb{E}[\operatorname{Ent}^\phi[f(X)\mid X_{-i}]],
$$where we denote by $X_{-i}$ the configuration $X$ restricted to $[n]\setminus\{i\}$.
    
\end{Definition}

 \begin{Definition}(Decay of entropy) Consider a Markov chain $(X_t)_{t\geq 0}$ on $\Omega$ having transition matrix $P$ and stationary distribution $\mu$. Then we say that it satisfies decay of $\phi$-entropy  with rate $\kappa$ if for all functions $f:\Omega\to\mathbb{R}$,
 $$
\operatorname{Ent}_\mu^\phi[Pf]\leq (1-\kappa)\operatorname{Ent}_\mu^\phi[f].
 $$\end{Definition}

Then we introduce the general notion of a localization process in \cite{chen2026edge}. We will later specify the exact localization scheme that we shall use.
\begin{Definition}
Consider $\mu$ a probability distribution on $\Omega$. Then a localization scheme having $\mu$ as target distribution is made up of a pair of continuous time processes:
\begin{itemize}
    \item The noising process $(X_t)_{t\in[0,1]}$. This is a Markov process such that $X_0\sim\mu$ and $X_1$ follows a Dirac measure.

    \item A denoising process $(Y_t)_{t\in[0,1]}$. This process is defined as the reversal of $(X_t)_{t\in[0,1]}$: $Y_\theta=X_{1-\theta}$. Thus $Y_0$ is a Dirac measure and $Y_1\sim\mu$. 
\end{itemize}
Since $X_t$ is a time reversal of $Y_t$, we use the denoising process $(Y_t)_{t\in[0,1]}$ to represent the localization scheme and call it the localization process.
\end{Definition}

We would like the localization path $Y_t$ to have the following nice property:

\begin{Definition}\label{defin2.485}(Approximate conservation of $\phi$-entropy) For a given denoising process $(Y_t)_{t\in[0,1]}$ and $\theta\in[0,1]$, the process $(Y_t)_{t\in[0,1]}$ satisfies $R$-conservation of $\phi$-entropy up to time $\theta$ if for all functions $f:\Omega\to\mathbb{R}_+$,
$$
\operatorname{Ent}^\phi[f(Y_1)]\leq R\cdot \mathbb{E}[\operatorname{Ent}^\phi[f(Y_1)\mid Y_\theta]].
$$
\end{Definition}

This general notion of localization schemes gives rise to a boosting of mixing properties from a parameter regime where mixing is known, to a new parameter regime where we expect to prove mixing.
\begin{lemma}\label{lemma3500}(\cite{chen2025rapid1}, Lemma 3.13) Consider $(Y_t)_{t\in[0,1]}$ a denoising process for a distribution $\mu$. Assume we can find some $\theta\in[0,1]$ such that
\begin{enumerate}
    \item The conditional distribution $\operatorname{Law}(Y_1\mid Y_\theta)$ satisfies $K$-approximate tensorization of $\phi$-entropy.

    \item The denoising process $(Y_t)_{t\in[0,1]}$ satisfies $R$-approximate conservation of $\phi$-entropy up to time $\theta$.
\end{enumerate}

Then $\mu$ satisfies (KR)-approximate tensorization of $\phi$-entropy. 
    
\end{lemma}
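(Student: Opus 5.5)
The plan is to chain the two hypotheses through the law of total $\phi$-entropy and then reassemble the coordinatewise conditional entropies. Fix $f:\Omega\to\mathbb{R}_+$. Since $Y_1\sim\mu$, hypothesis (2) gives
$$
\operatorname{Ent}^\phi[f(Y_1)]\leq R\cdot\mathbb{E}\bigl[\operatorname{Ent}^\phi[f(Y_1)\mid Y_\theta]\bigr].
$$
For each realization $y$ of $Y_\theta$, write $\mu_y:=\operatorname{Law}(Y_1\mid Y_\theta=y)$. Hypothesis (1) then gives
$$
\operatorname{Ent}^\phi_{\mu_y}[f]\leq K\sum_{i=1}^n\mathbb{E}_{X\sim\mu_y}\bigl[\operatorname{Ent}^\phi_{\mu_y}[f(X)\mid X_{-i}]\bigr].
$$
Taking expectation over $y$ and combining yields
$$
\operatorname{Ent}^\phi[f(Y_1)]\le KR\sum_i \mathbb{E}_y\,\mathbb{E}_{X\sim\mu_y}\bigl[\operatorname{Ent}^\phi_{\mu_y}[f(X)\mid X_{-i}]\bigr].
$$

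What remains is to compare the right-hand side with $\sum_i\mathbb{E}_{X\sim\mu}[\operatorname{Ent}^\phi_\mu[f(X)\mid X_{-i}]]$. The natural route is to view $(Y_\theta,Y_1)$ as a joint pair with $Y_1\sim\mu$, and, for fixed $i$, to work with the conditional law of $Y_1(i)$ given $(Y_\theta,Y_1(-i))$. Conditional $\phi$-entropy is convex and averages correctly under a mixture: by the law of total $\phi$-entropy (Jensen for the convex $\phi$), conditioning on more information can only decrease the expected conditional entropy. Concretely, conditioning first on $Y_1(-i)$ alone and then additionally on $Y_\theta$ gives
$$
\mathbb{E}\bigl[\operatorname{Ent}^\phi[f(Y_1)\mid Y_\theta,Y_1(-i)]\bigr]\le \mathbb{E}\bigl[\operatorname{Ent}^\phi[f(Y_1)\mid Y_1(-i)]\bigr],
$$
because
$$
\operatorname{Ent}^\phi[Z\mid\mathcal{G}]=\mathbb{E}\bigl[\operatorname{Ent}^\phi[Z\mid\mathcal{H}]\mid\mathcal{G}\bigr]+\operatorname{Ent}^\phi\bigl[\mathbb{E}[Z\mid\mathcal{H}]\mid\mathcal{G}\bigr]
$$
for $\mathcal{G}\subset\mathcal{H}$, and the second term is nonnegative. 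The left side is exactly $\mathbb{E}_y\mathbb{E}_{X\sim\mu_y}[\operatorname{Ent}^\phi_{\mu_y}[f(X)\mid X_{-i}]]$, and the right side is $\mathbb{E}_{X\sim\mu}[\operatorname{Ent}^\phi_\mu[f(X)\mid X_{-i}]]$. Summing over $i$ gives $(KR)$-approximate tensorization for $\mu$.

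The only delicate step is this monotonicity-under-refinement identity. It requires $\phi$-entropy to satisfy the chain rule above, which holds for any convex $\phi$ with the definition $\mathbb{E}\phi(Z)-\phi(\mathbb{E}Z)$, since the terms telescope. I would verify it directly for the two cases $\chi^2$ and $\mathbf{KL}$ used in the paper, with care taken about measurability and about the domain of $\phi$ when $f\ge 0$.
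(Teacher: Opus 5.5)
Your argument is correct and is the standard proof of this boosting lemma; the paper gives no proof of its own and only cites the source. You apply $R$-conservation, then $K$-approximate tensorization to each conditional law $\operatorname{Law}(Y_1\mid Y_\theta=y)$, and finally use that expected conditional $\phi$-entropy can only decrease when $\sigma(Y_1(-i))$ is refined to $\sigma(Y_\theta,Y_1(-i))$, via the chain rule and conditional Jensen as you wrote. That chain rule holds for every convex $\phi$, so no separate check for $\chi^2$ and $\mathbf{KL}$ is needed beyond noting that conditional expectations of $f\ge 0$ stay in the domain of $\phi$.
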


The following property is very useful in verifying approximation conservation of $\phi$-entropy:

\begin{Definition}
    Consider the denoising process $(Y_t)_{t\in[0,1]}$ for a measure $\mu$ on $\Omega$. Then the process $(Y_t)_{t\in[0,1]}$ is said to be $\phi$-entropically stable with rate $C$ at $\theta\in[0,1]$ if for any given function $f:\Omega\to\mathbb{R}$, denoting by $F=f(Y_1)$, then for any configuration $T$ with $\mathbb{P}(Y_\theta=T)>0$, we have
    $$
\lim_{h\to 0^+}\frac{1}{h}\operatorname{Ent}^\phi[\mathbb{E}[F\mid Y_{\theta+h}]\mid Y_\theta=T]
\leq\frac{C}{1-\theta}\operatorname{Ent}^\phi[F\mid Y_\theta=T].
    $$
\end{Definition}
The property of $\phi$-entropic stability implies approximate conservation of $\phi$-entropy, as outlined in the following lemma:

\begin{lemma}\label{lemma3760}(\cite{chen2025rapid1}, Theorem 3.16) Given $C:[0,1]\to\mathbb{R}_{\geq 0}$. Assume that a denoising process $(Y_t)_{t\in[0,1]}$ is $\phi$-entropically stable with rate $C(t)$ throughout $t\in[0,1]$. Then for each given $\theta\in[0,1]$, the process $(Y_t)_{t\in[0,1]}$ will satisfy $R$-approximate conservation of $\phi$-entropy up to time $\theta$ where we take 
$$
R=\exp\left(\int_0^\theta \frac{C(t)}{1-t}dt\right).
$$
    
\end{lemma}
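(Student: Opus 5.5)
The plan is to set $g(\theta):=\mathbb{E}[\operatorname{Ent}^\phi[F\mid Y_\theta]]$ for a fixed $f$ with $F=f(Y_1)$. We will derive a differential inequality for $g$ from the entropic stability hypothesis and then integrate it from $\theta$ to $1$. Since $Y_1$ is the terminal state, $g(1)=0$. Since $Y_0$ is a Dirac mass, $g(0)=\operatorname{Ent}^\phi[F]$.

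The first step is a chain-rule (law of total $\phi$-entropy) decomposition. For $s<t$, the tower property and the Markov property of the denoising process give
$$\operatorname{Ent}^\phi[F\mid Y_s]=\mathbb{E}\big[\operatorname{Ent}^\phi[F\mid Y_t]\,\big|\,Y_s\big]+\operatorname{Ent}^\phi\big[\mathbb{E}[F\mid Y_t]\,\big|\,Y_s\big].$$
This is just the telescoping identity $\mathbb{E}[\phi(F)\mid Y_s]-\phi(\mathbb{E}[F\mid Y_s])$ split through $\phi(\mathbb{E}[F\mid Y_t])$. Taking expectations yields
$$g(s)-g(t)=\mathbb{E}\big[\operatorname{Ent}^\phi[\mathbb{E}[F\mid Y_t]\mid Y_s]\big]\ge 0.$$
We take $t=s+h$, divide by $h$, and let $h\to0^+$. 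Applying the stability hypothesis configuration by configuration, using that the state space is finite so the limit commutes with the finite expectation, gives
$$-\frac{d^+}{ds}g(s)\le \frac{C(s)}{1-s}\,g(s),$$
understood as an upper Dini derivative.

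The main obstacle is the direction of the integration, and the right dynamical object is the decay of $g$ along the denoising flow. Grönwall applied to the display gives $g(\theta)\ge g(0)\exp\big(-\int_0^\theta \frac{C(t)}{1-t}dt\big)$. Rearranging,
$$\operatorname{Ent}^\phi[f(Y_1)]=g(0)\le \exp\Big(\int_0^\theta\frac{C(t)}{1-t}dt\Big)\,\mathbb{E}\big[\operatorname{Ent}^\phi[f(Y_1)\mid Y_\theta]\big],$$
which is exactly $R$-approximate conservation with the claimed $R$.

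The delicate technical points are the following:
\begin{enumerate}
\item We must justify Grönwall for a function that is only known to be nonincreasing, with a one-sided Dini derivative bound. For this we use that $\log g$ satisfies $D^+(-\log g)\le C(s)/(1-s)$ wherever $g>0$. We also use that a monotone function whose Dini derivative is bounded by an integrable function obeys the integrated inequality. If $g$ ever vanishes on $[0,\theta]$, monotonicity forces $g(0)$ to be handled separately; in that case the bound is trivial or follows by a limiting argument.
\item For the piecewise deterministic jump process, $g$ has only finitely many jump-induced nonsmooth points. Between jumps the conditional laws evolve smoothly in $t$, so $g$ is actually absolutely continuous and the argument goes through directly.
\end{enumerate}
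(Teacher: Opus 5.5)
Your proposal is correct and is the standard argument behind the cited Theorem 3.16; the paper itself only cites that result. The law of total $\phi$-entropy along the Markov denoising filtration identifies the right derivative of $g(s)=\mathbb{E}[\operatorname{Ent}^\phi[F\mid Y_s]]$ with $-\lim_{h\to0^+}\frac1h\mathbb{E}[\operatorname{Ent}^\phi[\mathbb{E}[F\mid Y_{s+h}]\mid Y_s]]$, and Gr\"onwall on $[0,\theta]$ together with $g(0)=\operatorname{Ent}^\phi[F]$ then gives exactly $R=\exp(\int_0^\theta \frac{C(t)}{1-t}dt)$.

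A few small cleanups:
\begin{enumerate}
\item The opening remark about integrating up to $1$ using $g(1)=0$ is never used. It would in fact be unsafe for the edge-field process, where $g$ can jump at $t=1$.
\item For these processes $g$ is smooth on $[0,1)$, since the path-level jumps average out. So there are no jump-induced kinks to handle.
\item Applying Gr\"onwall to $g(s)\exp(\int_0^s \frac{C(t)}{1-t}dt)$ rather than to $\log g$ removes the need to treat zeros of $g$ separately.
\end{enumerate}
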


Therefore, in order to deduce approximate tensorization of $\phi$-entropy of $\mu$, we only need to construct a denoising process $Y_t$ which is $\phi$-entropically stable.

\subsubsection{From approximate tensorization to functional inequalities for Glauber dynamics}

We recall here the fact that approximate tensorization of variance ($\chi^2$
 entropy) implies a Poincaré inequality for Glauber dynamics:

 \begin{fact}\label{fact3900}
A distribution $\mu$ on $\{0,1\}^n$ satisfies $C_1$- approximate tensorization of variance if and only if the spectral gap for the Glauber dynamics for $\mu$ satisfies $\mathbf{gap}_{GL}\geq\frac{1}{C_1n}.$      
 \end{fact}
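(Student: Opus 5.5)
The plan is to identify the Dirichlet form of the (heat-bath) Glauber dynamics exactly with the averaged sum of local conditional variances. Once that identity is in hand, both directions of the equivalence follow by unwinding the definitions of $\mathbf{gap}_{GL}$ and of $C_1$-approximate tensorization of variance.

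First, write the Glauber transition operator as $P=\frac1n\sum_{i=1}^n P_i$, where $P_i f(x)=\mathbb{E}[f(X)\mid X_{-i}=x_{-i}]$. Each $P_i$ is the orthogonal projection in $L^2(\mu)$ onto functions not depending on coordinate $i$. Hence
\[
\langle f,(I-P_i)f\rangle_\mu=\mathbb{E}\big[\operatorname{Var}[f(X)\mid X_{-i}]\big],
\]
and therefore
\[
\mathcal{E}(f,f)=\langle f,(I-P)f\rangle_\mu=\frac1n\sum_{i=1}^n\mathbb{E}\big[\operatorname{Var}[f(X)\mid X_{-i}]\big].
\]
I will also check that this agrees with the explicit formula for $\mathcal{E}_G$ given in the introduction. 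For a pair $\sigma,\sigma'$ differing only at coordinate $i$, with weights $a=\mu(\sigma)$ and $b=\mu(\sigma')$, the conditional law given $X_{-i}$ is the two-point law with masses $a/(a+b)$ and $b/(a+b)$. Its variance is $\frac{ab}{(a+b)^2}(f(\sigma)-f(\sigma'))^2$. Multiplying by the probability $a+b$ of the conditioning event gives $\frac{ab}{a+b}(f(\sigma)-f(\sigma'))^2$. Summing over unordered pairs equals $\frac12$ times the sum over ordered pairs, which matches the factor $\frac{1}{2|V|}$ in $\mathcal{E}_G$.

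Second, I conclude from the identity. The inequality $\mathbf{gap}_{GL}\ge \frac1{C_1 n}$ means that $\operatorname{Var}_\mu(f)\le C_1 n\,\mathcal{E}(f,f)$ for every $f$ with nonzero variance (functions with zero variance satisfy the inequality trivially). By the identity, this is exactly
\[
\operatorname{Var}_\mu(f)\le C_1\sum_i\mathbb{E}\big[\operatorname{Var}[f\mid X_{-i}]\big],
\]
which is $C_1$-approximate tensorization for $\phi=\chi^2$. Both implications are therefore the same inequality read in two ways.

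There is no real obstacle here. The only care needed is bookkeeping of the normalization constants, namely the factor $\frac12$ for ordered pairs and the $\frac1n$ from the uniform choice of site, so that the constant $C_1$ transfers exactly.
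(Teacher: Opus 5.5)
Your argument is correct and is the standard reasoning behind this fact, which the paper cites without proof. Writing $P=\frac1n\sum_i P_i$ with $P_i$ the conditional-expectation projections gives $\mathcal{E}(f,f)=\frac1n\sum_{i}\mathbb{E}[\operatorname{Var}(f\mid X_{-i})]$, so $\mathbf{gap}_{GL}\ge \frac{1}{C_1 n}$ and $C_1$-approximate tensorization of variance are literally the same inequality, and your normalization check (unordered versus ordered pairs, matching the factor $\frac{1}{2|V|}$ in $\mathcal{E}_G$) is right.
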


 A similar statement holds for modified log-Sobolev inequalities:
 \begin{fact}\label{fact4160}(\cite{chen2021optimal}, Fact 3.5)
A distribution $\mu$ on $\{0,1\}^n$ satisfies $C_1$- approximate tensorization of entropy if and only if the modified log-Sobolev inequality holds for the Glauber dynamics with constant $\rho_0\geq\frac{1}{C_1n}.$      
 \end{fact}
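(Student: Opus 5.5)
The plan is to prove the direction that the paper actually uses, namely that $C_1$-approximate tensorization of entropy implies entropy contraction for the Glauber operator $P$ at rate $\frac{1}{C_1 n}$. For the converse I will only indicate the route, in the Dirichlet-form sense used in \cite{chen2021optimal}.

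\textbf{Forward direction: setup.} Write the Glauber operator as $P=\frac1n\sum_{i=1}^n P_i$, where $P_i f(\sigma)=\mathbb{E}_\mu[f(X)\mid X_{-i}=\sigma_{-i}]$. Each $P_i$ is the $\mu$-conditional expectation onto $\sigma(X_{-i})$. Two facts follow.
\begin{itemize}
\item Mean preservation: $\mathbb{E}_\mu[P_i f]=\mathbb{E}_\mu[f]$ for every $i$.
\item Chain rule: by the tower property,
$$\operatorname{Ent}_\mu[f]-\operatorname{Ent}_\mu[P_if]=\mathbb{E}_\mu\big[\operatorname{Ent}[f(X)\mid X_{-i}]\big].$$
This holds because the two terms $\phi(\mathbb{E}f)$ cancel and $\mathbb{E}\phi(f)-\mathbb{E}\phi(\mathbb{E}[f\mid X_{-i}])$ is exactly the averaged conditional entropy.
\end{itemize}

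\textbf{Forward direction: the key observation.} Since all $P_if$ have the same mean, we have
$$\operatorname{Ent}_\mu[Pf]=\mathbb{E}_\mu\,\phi\Big(\tfrac1n\textstyle\sum_i P_if\Big)-\phi(\mathbb{E}_\mu f).$$
Pointwise convexity of $\phi(x)=x\log x$ then gives
$$\operatorname{Ent}_\mu[Pf]\le\frac1n\sum_i\operatorname{Ent}_\mu[P_if].$$
Note that we do not need convexity of $f\mapsto\operatorname{Ent}_\mu[f]$ in general, which fails; we only need it along functions with a common mean. Combining this with the chain rule and the tensorization hypothesis yields
$$\operatorname{Ent}_\mu[Pf]\le\operatorname{Ent}_\mu[f]-\frac1n\sum_i\mathbb{E}\big[\operatorname{Ent}[f\mid X_{-i}]\big]\le\Big(1-\frac{1}{C_1n}\Big)\operatorname{Ent}_\mu[f].$$
This is exactly $\rho_{LS}(P)\ge\frac1{C_1n}$.

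\textbf{Converse direction.} Apply pointwise convexity in the other direction: $\phi(Pf)\ge\phi(f)+\phi'(f)(Pf-f)$. Integrating gives
$$\operatorname{Ent}_\mu[f]-\operatorname{Ent}_\mu[Pf]\le\mathbb{E}_\mu[(f-Pf)\log f]=\frac1n\sum_i\mathbb{E}\big[\operatorname{Cov}(f,\log f\mid X_{-i})\big].$$
So contraction at rate $\rho$ yields the Dirichlet-form MLSI $\mathcal{E}(f,\log f)\ge\rho\operatorname{Ent}_\mu f$. In that formulation, this is the equivalence stated in \cite[Fact 3.5]{chen2021optimal}.

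\textbf{Main obstacle.} The real difficulty is passing back from $\operatorname{Cov}(f,\log f\mid X_{-i})$ to the smaller quantity $\operatorname{Ent}[f\mid X_{-i}]$. In general this comparison holds only in one direction, so the literal "only if" statement is lossy. For this reason I will rely on the cited reference for the converse. Only the forward implication is invoked in the proof of Theorem \ref{mlsitheorembound}, and that implication is fully established by the argument above.
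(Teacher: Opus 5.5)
Your forward argument is correct and complete. The chain rule $\operatorname{Ent}_\mu[f]-\operatorname{Ent}_\mu[P_if]=\mathbb{E}_\mu[\operatorname{Ent}(f\mid X_{-i})]$, the averaging bound $\operatorname{Ent}_\mu[Pf]\le\frac1n\sum_i\operatorname{Ent}_\mu[P_if]$ and the tensorization hypothesis give exactly $\rho_{LS}(P)\ge\frac{1}{C_1n}$ for the contraction constant the paper uses. This is the only direction invoked in the proof of Theorem~\ref{mlsitheorembound}. The paper gives no argument of its own here, only the citation, and yours is the standard one.

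One aside is wrong, though harmless: $f\mapsto\operatorname{Ent}_\mu[f]$ \emph{is} convex on nonnegative functions. Indeed $\operatorname{Ent}_\mu[f]=\sup\{\mathbb{E}_\mu[fg]:\mathbb{E}_\mu e^{g}\le 1\}$ is a supremum of linear functionals. So the averaging bound follows directly, without the common-mean observation.

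On the converse, you are right not to rely on it, but the situation is sharper than ``lossy''. With the same constant, the ``only if'' direction is false, so neither you nor the reference can supply it. Take a product measure $\mu$ with full support on $\{0,1\}^n$, $n\ge 2$.
\begin{itemize}
\item Its optimal tensorization constant is exactly $1$, since any $f$ depending on a single coordinate gives equality.
\item Yet $\rho_{LS}(P)>\frac1n$, for three reasons. By strict convexity of $x\log x$, equality in the averaging bound forces $P_1f=\dots=P_nf$, hence $Pf$ constant, so $\operatorname{Ent}_\mu[Pf]/\operatorname{Ent}_\mu[f]<1-\frac1n$ at every nonconstant $f$. Near $f\equiv 1$ the ratio is at most about $(1-\frac1n)^2$, the square of the second eigenvalue of $P$. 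Compactness of $\{f\ge 0:\mathbb{E}_\mu f=1\}$ handles the rest.
\item Setting $C_1:=1/(n\rho_{LS}(P))<1$, the converse would then assert a tensorization constant below $1$, which is impossible.
\end{itemize}

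Your own inequality $\operatorname{Ent}_\mu[f]-\operatorname{Ent}_\mu[Pf]\le\mathbb{E}_\mu[(f-Pf)\log f]$ shows that the Dirichlet-form constant $\inf_{f>0}\mathbb{E}_\mu[(f-Pf)\log f]/\operatorname{Ent}_\mu[f]$ is at least $\rho_{LS}(P)$. So the same example also refutes the converse for that definition of the MLSI constant. The Fact should therefore be read as a one-way implication, which is exactly what you proved.
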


\subsection{A special case: vertex field denoising}

An important case of localization schemes is the vertex-tilting field dynamics introduced in \cite{chen2024rapid}. Our edge-tilting localization scheme will also be built on this vertex field version, so we give a quick review here. We consider distributions on $\{0,1\}^n$ and identify a configuration $\sigma\in\{0,1\}^n$ with the subset $\{v\in [n]:\sigma_v=1\}$. 

\begin{Definition}
    Consider a distribution $\mu$ on $\{0,1\}^n$ and a sample $X\sim\mu$. Assign, for each $v\in[n]$, an independent random variable $U_v\sim\operatorname{Uniform}[0,1]$. Then the vertex field denoising process with respect to $\mu$ is defined by the following process $(Y_t)_{t\in[0,1]}$:
    $$
Y_t:=\{v\in[n]:X_v=1\wedge U_v\leq t\}.
    $$
\end{Definition}

For a $\theta\in[0,1]$, the tilted distribution $\theta*\mu$ is defined as
$$
(\theta*\mu)(\sigma)\propto \mu(\sigma)\cdot\theta^{|\sigma|},\quad\forall \sigma\in\{0,1\}^n,
$$where $|\sigma|:=|\{v\in[n]:\sigma_v=1\}|.$

\begin{lemma}(\cite{chen2026edge}, Proposition 2.14) For a distribution $\mu$, let $(Y_t)_{t\in[0,1]}$ be the corresponding vertex field denoising process. Then  we have
$$
\operatorname{Law}(Y_1\mid Y_t)=(1-t)*\mu^{Y_t},
$$
 such that $\mu^{Y_t}$ is the conditional law obtained by setting all variables in $Y_t$ pinned to be 1.   
\end{lemma}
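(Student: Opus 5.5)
We must show: for the vertex field denoising process, $\operatorname{Law}(Y_1\mid Y_t)=(1-t)*\mu^{Y_t}$. The plan is a direct Bayes computation of the posterior of $X$ given $Y_t=S$, followed by the observation that $Y_1=X$ almost surely.

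First, $U_v\le 1$ always, so $Y_1=\{v:X_v=1\}$, which we identify with $X$. Hence it suffices to compute $\mathbb{P}(X=\sigma\mid Y_t=S)$ for each $S\subseteq[n]$ with $\mathbb{P}(Y_t=S)>0$.

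Fix $\sigma$ and condition on $X=\sigma$. The event $Y_t=S$ means the following:
\begin{itemize}
\item $S\subseteq\sigma$;
\item $U_v\le t$ for every $v\in S$;
\item $U_v>t$ for every $v\in\sigma\setminus S$.
\end{itemize}
The variables $U_v$ with $v\notin\sigma$ play no role. Since the $U_v$ are independent of $X$ and of each other,
$$
\mathbb{P}(Y_t=S\mid X=\sigma)=\mathbf{1}[S\subseteq\sigma]\,t^{|S|}(1-t)^{|\sigma|-|S|}.
$$
By Bayes' rule,
$$
\mathbb{P}(X=\sigma\mid Y_t=S)\propto \mu(\sigma)\,\mathbf{1}[S\subseteq\sigma]\,(1-t)^{|\sigma|},
$$
where the factor $t^{|S|}(1-t)^{-|S|}$ does not depend on $\sigma$ and is absorbed into the normalization. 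For $t=1$ the factor $(1-t)^{-|S|}$ is undefined, so there we read off the formula directly from the likelihood (the only surviving $\sigma$ are those with $\sigma=S$), which matches the convention $0^0=1$.

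Now $\mu(\sigma)\mathbf{1}[S\subseteq\sigma]$ is, up to normalization, the pinned law $\mu^S$, obtained by conditioning on $\sigma_v=1$ for all $v\in S$. Multiplying by $(1-t)^{|\sigma|}$ and renormalizing gives, by the definition of the tilt, exactly $(1-t)*\mu^{S}$.

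The only delicate points are these. The conditioning event must have positive probability, which holds precisely when $\mu^S$ is well defined (that is, $\mu(S\subseteq X)>0$) and $t>0$ or $S=\emptyset$. The endpoint $t=1$ is handled by the direct computation above. There is no real obstacle beyond this bookkeeping.
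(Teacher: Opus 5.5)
Your Bayes computation is correct and is the standard argument. The paper does not reprove this lemma (it cites \cite{chen2026edge}), but the same posterior formula $\mathbb{P}(X=\sigma\mid Y_t=S)\propto\mu(\sigma)\,(1-t)^{|\sigma|}\,\mathbf{1}[S\subseteq\sigma]$ is what the paper states in the introduction and applies to $\pi_{\mathcal{A}}$ in its proof of Proposition~\ref{proposition511}.

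The only blemish is at the endpoint $t=1$, which the paper never needs. There $\mathbb{P}(Y_1=S)=\mu(X=S)$, so positivity requires $\mu(X=S)>0$ rather than $\mu(S\subseteq X)>0$. Moreover, the identification with $0*\mu^{S}$ holds only if $\mu^{S}$ is read as a law on the free coordinates $[n]\setminus S$, with tilt exponent $|\sigma\setminus S|$. With your literal exponent $|\sigma|$, every weight vanishes when $S\neq\emptyset$.
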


Spectral stability of vertex-field denoising process has the following characterization:

\begin{lemma}\label{lemma88}(\cite{chen2025rapid2}, Proposition 3.3)
    Let $(Y_t)_{t\in[0,1]}$ be a vertex-field denoising process with respect to $\mu$ on $\{0,1\}^n$. Then for any fixed $\theta\in[0,1]$, the following two claims are equivalent:
    \begin{itemize}
        \item $(Y_t)_{t\in[0,1]}$ is spectrally stable with rate $C$ at time $\theta$;
        \item Given any subset $S$ with $\mathbb{P}[Y_\theta=S]>0$,
        $$
\operatorname{Cov}((1-\theta)*\mu^S)\lesssim C\cdot\operatorname{diag}\{\mathbf{m}((1-\theta)*\mu^S)\},
        $$ where for a distribution $\nu$, $\mathbf{m}(\nu):=\mathbb{E}_{X\sim\nu}[X]$ is its mean vector. 
    \end{itemize}
\end{lemma}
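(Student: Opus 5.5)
The plan is to compute the entropic-stability derivative for $\phi=\chi^2$ explicitly, and then recognize it as a quadratic form governed by the matrix $D^{-1/2}\operatorname{Cov}(\nu)D^{-1/2}$. Throughout, fix $\theta$ and a set $S$ with $\mathbb{P}[Y_\theta=S]>0$, and write
$$\nu:=\operatorname{Law}(Y_1\mid Y_\theta=S)=(1-\theta)*\mu^S$$
by the preceding lemma. Write $m_v=\nu(X_v=1)$ and $D=\operatorname{diag}\{\mathbf m(\nu)\}$. For a test function $f$, set $F=f(Y_1)$; conditionally on $Y_\theta=S$, the law of $F$ is that of $f$ under $\nu$.

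\textbf{Step 1: the short-time law of $Y_{\theta+h}$.} Condition on $Y_\theta=S$ and on $X$. Each $v\notin S$ with $X_v=1$ has $U_v$ uniform on $(\theta,1]$, independently over $v$. Hence $v$ is revealed by time $\theta+h$ with probability $h/(1-\theta)$. Two or more reveals have probability $O(h^2)$. Averaging over $X\sim\nu$ gives
$$\mathbb P(Y_{\theta+h}=S\cup\{v\}\mid Y_\theta=S)=\frac{h\,m_v}{1-\theta}+O(h^2),$$
and otherwise $Y_{\theta+h}=S$ up to $O(h^2)$. Moreover, on $\{Y_{\theta+h}=S\cup\{v\}\}$ the conditional law of $X$ is $\nu(\cdot\mid X_v=1)$, again by the tilting/pinning formula. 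So $\mathbb E[F\mid Y_{\theta+h}]=\nu(fX_v)/m_v$ there. On the event $\{Y_{\theta+h}=S\}$ the conditional mean is $\nu(f)+O(h)$.

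\textbf{Step 2: computing the derivative.} The conditional variance of the martingale increment is
$$\operatorname{Var}\big(\mathbb E[F\mid Y_{\theta+h}]\mid Y_\theta=S\big)=\sum_{v\notin S}\frac{h\,m_v}{1-\theta}\Big(\frac{\nu(fX_v)}{m_v}-\nu(f)\Big)^2+O(h^2).$$
The "no reveal" branch contributes only $O(h)^2$, since its deviation is $O(h)$ and its probability is $1-O(h)$. Dividing by $h$ and letting $h\to0^+$, the left side of the stability definition equals
$$\frac{1}{1-\theta}\sum_{v}\frac{\operatorname{Cov}_\nu(f,X_v)^2}{m_v}.$$
Here the sum may be taken over all $v$ with $m_v>0$: for $v\in S$, or $v$ with $X_v$ a.s. constant, the covariance vanishes. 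Thus spectral stability with rate $C$ at $\theta$ is equivalent to
$$\|D^{-1/2}c_f\|^2\le C\operatorname{Var}_\nu(f)\quad\text{for all }f,\qquad\text{where } c_f=\operatorname{Cov}_\nu(X,f).$$

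\textbf{Step 3: the linear-algebra equivalence.} Set $\Sigma=\operatorname{Cov}(\nu)$. For a fixed vector $a$, Cauchy–Schwarz gives
$$(a\cdot c_f)^2=\operatorname{Cov}_\nu(a\cdot X,f)^2\le \operatorname{Var}_\nu(a\cdot X)\operatorname{Var}_\nu(f)=a^\top\Sigma a\,\operatorname{Var}_\nu(f),$$
with equality for $f=a\cdot X$. Hence
$$\sup_f\frac{\|D^{-1/2}c_f\|^2}{\operatorname{Var}_\nu f}=\sup_f\sup_{\|b\|=1}\frac{(b\cdot D^{-1/2}c_f)^2}{\operatorname{Var}_\nu f}=\sup_{\|b\|=1}b^\top D^{-1/2}\Sigma D^{-1/2}b=\lambda_{\max}\big(D^{-1/2}\Sigma D^{-1/2}\big).$$
These computations are carried out on the coordinates with $m_v>0$; on the others the corresponding rows and columns of $\Sigma$ vanish. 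So the stability condition holds iff $D^{-1/2}\Sigma D^{-1/2}\preceq C I$, i.e. iff $\Sigma\preceq C D$. This holds for every admissible $S$ exactly when both bullet conditions agree.

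\textbf{Main obstacle.} The delicate point is Step 1: rigorously justifying the first-order expansion. This requires that, given $Y_\theta=S$, the clocks $U_v$ for unrevealed $v$ are still independent and uniform on $(\theta,1]$ given $X$, with $X\sim\nu$ independent of them. This should follow from Bayes' rule applied to the joint law of $(X,U)$. With this, all $O(h^2)$ terms are uniform, since $f$ is bounded on a finite space.
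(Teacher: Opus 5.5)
Your argument is correct. The paper gives no proof of this lemma: it is imported verbatim from \cite{chen2025rapid2}, Proposition 3.3. What you have written is therefore a self-contained replacement for a citation, and it is the natural derivation. You expand $\mathbb{E}[F\mid Y_{\theta+h}]$ to first order over single-reveal jumps to get $\frac{1}{1-\theta}\sum_v \operatorname{Cov}_\nu(f,X_v)^2/m_v$. You then identify $\sup_f\|D^{-1/2}c_f\|^2/\operatorname{Var}_\nu(f)$ with $\lambda_{\max}(D^{-1/2}\Sigma D^{-1/2})$ by Cauchy--Schwarz, with equality at linear test functions.

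Two points deserve a line each in a write-up.

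First, on $\{Y_{\theta+h}=S\cup\{v\}\}$ the posterior of $X$ is $(1-\theta-h)*\mu^{S\cup\{v\}}$, not exactly $\nu(\cdot\mid X_v=1)=(1-\theta)*\mu^{S\cup\{v\}}$. So your identity $\mathbb{E}[F\mid Y_{\theta+h}]=\nu(fX_v)/m_v$ should carry an $O(h)$ error term. This is harmless, since it multiplies a branch of probability $O(h)$ and does not affect the limit.

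Second, centering the variance at $\nu(f)$ requires $\mathbb{E}\bigl[\mathbb{E}[F\mid Y_{\theta+h}]\mid Y_\theta=S\bigr]=\nu(f)$. The paper's definition conditions on $Y_{\theta+h}$ alone, so the tower property applies only because $X$ and $Y_\theta$ are conditionally independent given $Y_{\theta+h}$. This follows from the explicit joint law: for fixed $T'$,
$\mathbb{P}(X=x,Y_\theta=S,Y_{\theta+h}=T')=\mu(x)\mathbf{1}[S\subseteq T']\mathbf{1}[T'\subseteq x]\,\theta^{|S|}h^{|T'\setminus S|}(1-\theta-h)^{|x|-|T'|}$,
which factors into a function of $x$ times a function of $S$. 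Summing this same formula over $x$ also yields the Step 1 transition probabilities $\frac{h m_v}{1-\theta}+O(h^2)$ directly. That disposes of the clock-level Bayes argument you flag as the main obstacle.
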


\subsection{Edge-field tilting and its localization schemes}

The vertex-field denoising process introduced in the last section changes the strength of the external field of an Ising model but keeps quadratic interaction unchanged. This is not sufficient for the RFIM where we wish to weaken the quadratic interaction but to keep the external field fixed. The recent paper 
\cite{chen2026edge} pointed out that our goal can be fulfilled by considering a tilting of the edge-field instead of vertex-field, so we remove edges rather than vertices. Before presenting our concrete constructions of edge-field denoising, we outline the abstract framework of \cite{chen2026edge}.

For a given distribution $\mu$ on $\{0,1\}^V$ where $V$ is a ground set, let $\mathcal{A}$ be a collection of events, so that each $A\in\mathcal{A}$ is a subset of $\{0,1\}^V$. We take $X\sim\mu$ and for any event $A\in\mathcal{A}$ define $Z_A=Z_A(X):=\mathbf{1}[X\in A]$. Let $Z=Z(X):=(Z_A)_{A\in\mathcal{A}}$.

We let $\pi$ denote the joint distribution of $(X,Z)$, so that 
$$
\pi:=\operatorname{Law}(X,Z(X)),
$$so that $\pi$ is a distribution over $\{0,1\}^{V\cup\mathcal{A}}$. Then by construction, $\mu=\pi_V$ and $\operatorname{Law}(Z)=\pi_\mathcal{A}$.

\begin{Definition}(Event-field dynamics, \cite{chen2026edge}) Let $\theta\in[0,1]$. The event-field dynamics for $\mu$ with respect to $\mathcal{A}$ and tilt $\theta$ is the following defined Markov chain on the state space $\Omega(\mu)$:

From the current state $X_t\in\Omega(\mu)$, the next state $X_{t+1}$ is generated via
\begin{enumerate}
    \item ($V\to\mathcal{A}$). Set $Z_t:=Z(X_t)=\{A\in\mathcal{A}\mid X_t\in A\}.$
    \item (Down-$\mathcal{A}$) Then generate a random subset $T\subset Z_t$ where we independently remove each $A\in Z_t$ with probability $\theta$;
    \item (Up-$\mathcal{A}$) Sample $Z_{t+1}\sim(\theta*\pi_\mathcal{A})(\cdot\mid Z_{t+1}\supseteq T)$, where we define 
    $$
\theta*\pi_\mathcal{A}(Z)\propto \pi_A(Z)\cdot \prod_{A\in\mathcal{A}}\theta^{\mathbf{1}(Z_A=1)},\quad \forall Z\in\{0,1\}^{\mathcal{A}},$$
     the distribution obtained from $\pi_\mathcal{A}$ by tilting the occurrence of each event $A$ by   $\theta$. 
    \item Sample $X_{t+1}\sim\pi_V(\cdot\mid Z_{t+1})$.
\end{enumerate}
    
\end{Definition}

The only example that we will use in this paper is the following edge-field dynamics:
\begin{Example}\label{exam454}(Edge-field dynamics) Let $G=(V,E)$ be a graph and $\mu$ a distribution on $\{0,1\}^V$. We set
$$
\mathcal{A}:=\{A_{uv}\mid (u,v)\in E\}, \quad\text{where } A_{uv}:=\{\sigma:\sigma_u=\sigma_v=1\},\quad\forall (u,v)\in E.
$$ Given $T\subseteq \mathcal{A}$ generated in the (Down-$\mathcal{A}$) step, we can compute that 
$$\begin{aligned}
\mathbb{P}(X_{t+1}=\sigma\mid T)&\propto \mu(\sigma)\cdot\theta^{|\mathcal{A}(\sigma)|}\cdot\mathbf{1}[\mathcal{A}(\sigma)\supseteq T],
\end{aligned}$$
    where $\mathcal{A}(\sigma):=\{A_{uv}\in\mathcal{A}:\sigma\in A_{uv}\}.$
\end{Example}

Next we recall the definition of an event-field denoising process from \cite{chen2026edge}. This is mostly a projection of the vertex-field denoising process on the collection of events $\mathcal{A}$.

\begin{Definition}\label{definition45566}(Event-field denoising) Let $\mu$ be a distribution on $\{0,1\}^V$ and consider a collection of events $\mathcal{A}$. Denote by $\pi$ the joint distribution on $\{0,1\}^{V\cup\mathcal{A}}$ and let $(Y_t')_{t\in[0,1]}$ be the vertex-field denoising process for the marginal $\pi_{\mathcal{A}}$ with $\pi_{\mathcal{A}}=\operatorname{Law}(Y_1')$.
    Then event-field denoising process for $\mu$ through $\mathcal{A}$, which we denote by $(Y_t)_{t\in[0,1]}$ is defined via
\begin{enumerate}
    \item Sample $Y_1\sim\pi_V(\cdot\mid Y_1')$,
    \item For $t\in[0,1)$, we set $Y_t=Y_t'$. 
\end{enumerate} In other words, the process follows the vertex field denoising process applied to the event indicators, and finally at $t=1$ projects onto the vertices in $V$ via the conditional law $\pi_V(\cdot\mid Y_1')$.
\end{Definition}
Since $Y_t=Y_t'$ for all $t\in[0,1)$, the spectral stability of $(Y_t')_{t\in[0,1]}$ implies the spectral stability of $(Y_t)_{t\in[0,1]}$:
\begin{lemma}\label{lemma2.16}($\phi$-Entropic stability upgrades, \cite{chen2026edge}, Proposition 4.12) 
    Let $(Y_t)_{t\in[0,1]}$ and $(Y_t')_{t\in[0,1]}$ be the two denoising processes in Definition \ref{definition45566}. Then for any $\theta\in[0,1)$, suppose that $(Y_t')_{t\in[0,1]}$ is $\phi$-entropically stable with rate $C$ at time $\theta$, then $(Y_t)_{t\in[0,1]}$ is also $\phi$-entropically stable with rate $C$ at time $\theta$.
\end{lemma}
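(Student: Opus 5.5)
The statement to prove is Lemma~\ref{lemma2.16}: entropic stability of the vertex-field process $(Y'_t)$ on the event indicators transfers to the event-field process $(Y_t)$. The plan is to compare the two definitions of $\phi$-entropic stability at a fixed time $\theta\in[0,1)$ directly. The key observation is that the two processes agree on $[0,1)$, so they generate the same filtration up to time $\theta+h$ for small $h>0$. Only the terminal variable differs: $Y_1$ versus $Y'_1$.

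Fix $f:\Omega\to\mathbb{R}$ with $\Omega=\{0,1\}^V$ and set $F=f(Y_1)$. Since $Y_1\sim\pi_V(\cdot\mid Y_1')$ is sampled using only $Y_1'$, the whole path $(Y_t)_{t<1}$ is conditionally independent of $Y_1$ given $Y_1'$. I therefore define the function $g:\{0,1\}^{\mathcal{A}}\to\mathbb{R}$ by
$$g(Z):=\mathbb{E}_{X\sim\pi_V(\cdot\mid Z)}[f(X)],$$
and set $G=g(Y_1')$. For every $s<1$, the tower property together with this conditional independence gives
$$\mathbb{E}[F\mid Y_s]=\mathbb{E}[\mathbb{E}[F\mid Y'_1,(Y'_r)_{r\le s}]\mid Y'_s]=\mathbb{E}[G\mid Y'_s].$$
Applying this with $s=\theta+h$ and conditioning on $Y_\theta=Y'_\theta=T$ shows that the left-hand sides of the two stability inequalities coincide:
$$\operatorname{Ent}^\phi[\mathbb{E}[F\mid Y_{\theta+h}]\mid Y_\theta=T]=\operatorname{Ent}^\phi[\mathbb{E}[G\mid Y'_{\theta+h}]\mid Y'_\theta=T].$$

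Stability of $(Y'_t)$ applied to the test function $g$ then bounds the limit, as $h\to0^+$ of $1/h$ times this quantity, by $\frac{C}{1-\theta}\operatorname{Ent}^\phi[G\mid Y'_\theta=T]$. The remaining step is to show
$$\operatorname{Ent}^\phi[G\mid Y'_\theta=T]\le \operatorname{Ent}^\phi[F\mid Y_\theta=T].$$
This is Jensen's inequality in conditional form. Both quantities have the same mean, $\mathbb{E}[F\mid Y_\theta=T]$. Also $G=\mathbb{E}[F\mid Y'_1,Y_\theta]$ by the conditional independence above, and $\phi$ is convex, so
$$\mathbb{E}[\phi(G)\mid Y_\theta=T]\le\mathbb{E}[\phi(F)\mid Y_\theta=T].$$
The inequality for the entropies follows. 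Chaining these steps gives stability of $(Y_t)$ with the same rate $C$ at time $\theta$.

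I expect the main obstacle to be making the conditional independence and the identification of the filtrations rigorous, which in turn justifies $\mathbb{E}[F\mid Y_s]=\mathbb{E}[G\mid Y'_s]$. This requires checking that conditioning on $Y_\theta=T$ and on $Y_{\theta+h}$ are the same events for both processes; that holds because $\theta+h<1$ once $h$ is small. A second point to verify is that $g$ is an admissible test function: if the definition is restricted to $f\ge0$, as in Definition~\ref{defin2.485}, then $g\ge0$ as well, so nothing changes. The case $\theta=1$ is excluded by the statement, and the argument genuinely needs $\theta<1$ so that $\theta+h<1$.
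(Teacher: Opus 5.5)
Your proof is correct and follows the route the paper indicates. The paper only cites \cite{chen2026edge}, Proposition 4.12, with the remark that $Y_t=Y_t'$ on $[0,1)$, so the left-hand sides coincide once $f$ is replaced by $g(Z)=\mathbb{E}_{\pi_V(\cdot\mid Z)}[f]$, and your conditional Jensen step $\operatorname{Ent}^\phi[g(Y_1')\mid Y_\theta=T]\le\operatorname{Ent}^\phi[f(Y_1)\mid Y_\theta=T]$ supplies the right-hand-side comparison that the paper's one-line justification leaves implicit.
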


Then we specialize to the edge-field dynamics in Example \ref{exam454} and define its own denoising process:

\begin{Definition}\label{definition569}(Edge-field denoising process) Let $G=(V,E)$ be a graph and $\mu$ a distribution on $\{0,1\}^V$. Recall that we define 
$$
\mathcal{A}:=\{A_{uv}\mid (u,v)\in E\},\quad\text{where }A_{uv}:=\{\sigma:\sigma_u=\sigma_v=1\},\quad \forall (u,v)\in E.
$$ Let $(Y_t)_{t\in[0,1]}$ be the event-field denoising process for $\mu$ with respect to this family $\mathcal{A}$. Then the resulting process $(Y_t)_{t\in[0,1]}$ is called the edge-field denoising process for $\mu$ on $G$.
\end{Definition}

In the following, we give an explicit description of the process $Y_t$. We shall use a notation $\theta\otimes \mu$ for the distribution obtained by tilting the interactions by a factor $\theta>0$:
$$\theta\otimes \mu(\sigma)\propto \mu(\sigma)\cdot \theta^{m(\sigma)},\quad\forall \sigma\in\{0,1\}^V,
$$where we denote by $m(\sigma):=|\{(u,v)\in E:\sigma_u=\sigma_v=1\}|$ the number of edges whose two endpoints are both equal to 1.

The edge field denoising process $(Y_t)_{t\in[0,1]}$ satisfies that its posterior distribution is the target distribution $\mu$ with tilted interaction:

\begin{Proposition}\label{proposition511}
    Let $(Y_t)_{t\in[0,1]}$ denote the edge-field denoising process. Then its posterior distribution satisfies 
    $$
\operatorname{Law}(Y_1\mid Y_t)=(1-t)\otimes \mu^{Y_t},
    $$where we recall that $\mu^{Y_t}$ denotes the conditional law of $\mu$ conditioning on the event that for every edge $(u,v)\in E$ with $Y_t(u,v)=1$, then $A_{uv}$ occurs, so that $\sigma_u=\sigma_v=1$.
\end{Proposition}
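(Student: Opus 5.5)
The plan is a direct Bayes computation that goes through the joint law of $X$ and the uniform clocks. First I will make the event-field denoising process for $\mathcal{A}=\{A_{uv}\}$ concrete. Sample $X\sim\mu$ and put $Z=Z(X)=\{A_{uv}:X_u=X_v=1\}$. Then attach independent $U_{uv}\sim\operatorname{Unif}[0,1]$ to each edge. By Definition \ref{definition45566}, $Y_t'=\{A_{uv}\in Z: U_{uv}\le t\}$ is the vertex-field denoising process for $\pi_{\mathcal{A}}=\operatorname{Law}(Z)$. Step (1) of that definition samples $Y_1\sim\pi_V(\cdot\mid Y_1')$, and $Y_1'=Z$, so $(Y_1,Z)$ has law $\pi$. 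In other words, $Y_1$ can be identified with $X$ itself, with the revealed set given by $Y_t=Y_t'$. This identification is the one point that needs care. The argument is that $\pi_V(\cdot\mid Z)$ is exactly the conditional law of $X$ given $Z(X)$, so the pair $(Y_1,(Y_t')_t)$ has the same joint law as $(X,(\{A\in Z(X):U_A\le t\})_t)$.

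With that representation, fix $t<1$ and a set $S\subseteq\mathcal{A}$ with $\mathbb{P}(Y_t=S)>0$. For each $\sigma\in\{0,1\}^V$ I compute
\[
\mathbb{P}(X=\sigma,\;Y_t=S)=\mu(\sigma)\,\mathbf{1}[S\subseteq Z(\sigma)]\;t^{|S|}(1-t)^{|Z(\sigma)|-|S|}.
\]
This holds because the clocks are independent of $X$. Given $X=\sigma$, the event $Y_t=S$ requires that each $A\in S$ lies in $Z(\sigma)$ with $U_A\le t$, and that each $A\in Z(\sigma)\setminus S$ has $U_A>t$. Edges outside $Z(\sigma)$ impose no constraint.

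Dividing by $\mathbb{P}(Y_t=S)$ and dropping the factors $t^{|S|}(1-t)^{-|S|}$, which do not depend on $\sigma$, gives
\[
\mathbb{P}(X=\sigma\mid Y_t=S)\propto \mu(\sigma)\mathbf{1}[S\subseteq Z(\sigma)]\,(1-t)^{m(\sigma)},
\]
where $|Z(\sigma)|=m(\sigma)$.

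Next I use that $\mu^S(\sigma)\propto\mu(\sigma)\mathbf{1}[\sigma_u=\sigma_v=1\ \forall (u,v)\in S]$. The indicator here is the same as $\mathbf{1}[S\subseteq Z(\sigma)]$. Hence the right-hand side is proportional to $\mu^S(\sigma)(1-t)^{m(\sigma)}$, which is by definition $(1-t)\otimes\mu^S$.

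Two edge cases remain. At $t=1$ the statement is trivial under the convention $0^0=1$: the posterior is then supported on $\sigma$ with $Z(\sigma)=S$, which matches $0\otimes\mu^S$. The other check is that Proposition 2.14 of \cite{chen2026edge}, applied to $\pi_{\mathcal{A}}$, gives the same conclusion at the level of $Z$. Our direct computation already covers this, so the proof does not need that proposition.
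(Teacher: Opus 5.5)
Your argument is correct and is essentially the paper's Bayes computation. The paper factors $\mathbb{P}[Y_1=\sigma\mid Y_t=T]$ through $Y_1'=Z(\sigma)$ and quotes the vertex-field posterior from \cite{chen2026edge}, Proposition 2.14; you instead derive that posterior directly from the uniform clocks after identifying $(Y_1,(Y_t')_t)$ with $(X,(\{A\in Z(X):U_A\le t\})_t)$, which also makes explicit the conditional independence of $Y_1$ and $Y_t'$ given $Y_1'$ that the paper uses tacitly.

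The only slip is your $t=1$ aside. With $m(\sigma)$ counting pinned edges, $0\otimes\mu^S$ vanishes identically when $S\neq\emptyset$, so your match holds only if the tilt is restricted to unpinned edges, or as the limit $t\uparrow 1$. This case is degenerate and never used, since only $t\le\theta_*<1$ matters.
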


\begin{proof}
    For any $\sigma\in\{0,1\}^V$, denote by $Z(\sigma)\in\{0,1\}^\mathcal{A}$ the vector that indicates whether each $A_{uv}\in\mathcal{A}$ occurs in the configuration $\sigma$. Take $X\sim\mu$, recall that $\pi:=\operatorname{Law}(X,Z(X))$ on $\{0,1\}^{V\cup \mathcal{A}}$.

Let $(Y_t')$ be the vertex-field denoising process for $\pi_\mathcal{A}$. Then for any configuration $T$,
$$\begin{aligned}
\quad &\mathbb{P}[Y_1=\sigma\mid Y_t=T]\\&=\mathbb{P}[Y_1=\sigma\mid Y_1'=Z(\sigma)]\cdot\mathbb{P}[Y_1'=Z(\sigma)\mid Y_t'=T]\\&\propto \mathbb{P}[Y_1=\sigma\mid Y_1'=Z(\sigma)]\cdot (1-t)^{|Z(\sigma)|}\cdot \pi_\mathcal{A}(Z(\sigma))\cdot\mathbf{1}[T\subseteq Z(\sigma)]\\&\propto (1-t)^{m(\sigma)}\cdot \pi(\sigma, Z(\sigma))\cdot\mathbf{1}[T\subseteq Z(\sigma)]. 
\end{aligned}$$
    This implies $\operatorname{Law}(Y_1\mid Y_t)=(1-t)\otimes \mu^{Y_t}$ since $\sigma$ determines $Z(\sigma)$.
\end{proof}

\subsection{Spectral stability of edge-field dynamics}
Thanks to Lemma \ref{lemma2.16},
 we only need to prove the spectral stability of the denoising process $Y_t'$. Motivated by the criterion in Lemma \ref{lemma88}, we define the following correlation matrix for $Y_t'$:
\begin{Definition}\label{definition532}(Second-order correlation matrix) Let $\mu$ be a distribution on $\{0,1\}^V$ and $G=(V,E)$ be a graph. Then for $u,v\in V$, denote by $uv$ the event that 
$$
uv:=\{\sigma\in\{0,1\}^V\mid \sigma_u=\sigma_v=1\}.
$$ Then we define the second order correlation matrix $\operatorname{Cor}_\mu^{(2)}\in\mathbb{R}^{E\times E}$ via
$$
\forall (u,v),(w,z)\in E,\quad \operatorname{Cor}_\mu^{(2)}(uv,wz):=\begin{cases}
    \mu(wz\mid uv)-\mu(wz),&\text{ if }\mu(uv)>0,\\0,&\text{otherwise}.
\end{cases}
$$
    
\end{Definition}

\begin{remark}
    We define the correlation matrix $\operatorname{Cor}_\mu^{(2)}$ in this form due to the following fact: let $\mu$ be a distribution on $\{0,1\}^V$, then $(\operatorname{diag}(\mathbb{E}(\mu)))^{-1}\cdot \operatorname{Cov}_\mu=\operatorname{Cor}_\mu$.

    Although $\operatorname{Cor}_\mu^{(2)}$ is not a symmetric matrix, its eigenvalues are all real.
\end{remark}

Then we can translate spectral stability into the following criterion: 

\begin{corollary}\label{followingcriterion}(Spectral stability via second-order correlations) Let $(Y_t)_{t\in[0,1]}$ denote the edge-field denoising process for $\mu$ on the graph $G=(V,E)$. Then for fixed $\theta\in(0,1)$, assume that for every $\Lambda\subset V$ and any feasible pinning $\tau\in\Omega(\mu_\Lambda)$ we have that 
$$
\lambda_{max}(\operatorname{Cor}^{(2)}_{(1-\theta)\otimes \mu^\tau})\leq C,
$$
    then the process $(Y_t)_{t\in[0,1]}$ is spectrally stable with rate $C$ at time $\theta$.
\end{corollary}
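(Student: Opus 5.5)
The plan is to reduce the claim to Lemma \ref{lemma88}, applied to the auxiliary process $(Y_t')_{t\in[0,1]}$, and then lift back with Lemma \ref{lemma2.16}. Recall that $(Y_t')$ is the vertex-field denoising process for the marginal $\pi_{\mathcal{A}}$ on $\{0,1\}^{\mathcal{A}}\cong\{0,1\}^E$. By Lemma \ref{lemma2.16}, it suffices to show that $(Y_t')$ is spectrally stable with rate $C$ at time $\theta$. By Lemma \ref{lemma88}, this is equivalent to the following: for every $S\subseteq\mathcal{A}$ with $\mathbb{P}[Y_\theta'=S]>0$,
$$
\operatorname{Cov}((1-\theta)*\pi_{\mathcal{A}}^S)\lesssim C\cdot\operatorname{diag}\{\mathbf{m}((1-\theta)*\pi_{\mathcal{A}}^S)\}.
$$

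The first step is to identify the measure $(1-\theta)*\pi_{\mathcal{A}}^S$ with the law of $Z(X)$ when $X\sim(1-\theta)\otimes\mu^\tau$, for a suitable pinning $\tau$. Pinning $S$ means that every edge $(u,v)$ with $A_{uv}\in S$ has $\sigma_u=\sigma_v=1$. This is exactly the pinning $\tau\equiv 1$ on $\Lambda:=\bigcup_{A_{uv}\in S}\{u,v\}$, and it is feasible because $\mathbb{P}[Y'_\theta=S]>0$. Since $X$ determines $Z(X)$ and the vertex tilt $(1-\theta)^{|Z|}$ equals $(1-\theta)^{m(\sigma)}$, the computation in the proof of Proposition \ref{proposition511} shows that the tilted, pinned event marginal is the pushforward of $\nu:=(1-\theta)\otimes\mu^\tau$ under $Z$.

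Under this identification, the mean vector $\mathbf{m}$ has entries $\nu(uv)$, and the covariance entries are $\nu(uv\cap wz)-\nu(uv)\nu(wz)$. Hence
$$
\operatorname{diag}(\mathbf{m})^{-1}\operatorname{Cov}=\operatorname{Cor}^{(2)}_\nu,
$$
with the convention of Definition \ref{definition532} on rows where $\nu(uv)=0$. Those rows and the corresponding columns of $\operatorname{Cov}$ vanish, since the indicator is a.s.\ zero, so they can be discarded.

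It remains to convert the eigenvalue bound into the Loewner inequality. Restrict to the coordinates where $D:=\operatorname{diag}(\mathbf{m})$ is positive. Then $D^{-1/2}\operatorname{Cov}D^{-1/2}$ is symmetric and similar to $D^{-1}\operatorname{Cov}=\operatorname{Cor}^{(2)}_\nu$. This gives real eigenvalues, and by hypothesis all of them are at most $C$. Therefore $D^{-1/2}\operatorname{Cov}D^{-1/2}\preceq C\,I$, i.e.\ $\operatorname{Cov}\preceq C D$, which is the required criterion.

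The main point needing care is the first step: checking that the pinned, tilted event marginal is exactly the law of $Z$ under $(1-\theta)\otimes\mu^\tau$. In particular, one must verify that the pinning $\mu^{S}$ on events coincides with a vertex pinning $\mu^\tau$ with $\tau\in\Omega(\mu_\Lambda)$, so that the hypothesis of the corollary applies. One must also verify that the conditional $\pi_V(\cdot\mid Z)$ introduces no extra weights. Both follow because $Z$ is a deterministic function of $\sigma$.
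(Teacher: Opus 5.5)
Your proposal is correct and follows the same route as the paper, which proves the corollary by combining Definition \ref{definition45566}, Lemma \ref{lemma2.16} and Lemma \ref{lemma88}, using the observation that conditioning on the events $A_{uv}$ is the same as pinning $\sigma_u=\sigma_v=1$. You also spell out two steps the paper leaves implicit, and both are handled correctly: the identification of $(1-\theta)*\pi_{\mathcal{A}}^S$ with the pushforward of $(1-\theta)\otimes\mu^\tau$ under $Z$, and the similarity of $D^{-1}\operatorname{Cov}$ to $D^{-1/2}\operatorname{Cov}D^{-1/2}$, which turns the bound on $\lambda_{max}$ into the Loewner inequality $\operatorname{Cov}\preceq C D$.
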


\begin{proof}
This follows from combining Definition \ref{definition45566}, Lemma \ref{lemma2.16} and Lemma \ref{lemma88}. Note that a conditioning on the event field events $A_{uv}$ is equivalent to conditioning on the vertex events $\sigma_u=\sigma_v=1$.
\end{proof}

\subsection{Approximate tensorization at the end of the flow}

We first record how the parameter of RFIM model changes under the edge-field denoising process:

\begin{fact}
    Let $\mu$ be the RFIM measure $\mu_G$. Then for any $\theta\in(0,1)$ and any pinning $\tau$, the Ising measure $(1-\theta)\otimes \mu^\tau$ has quadratic coefficient $4\beta+\ln(1-\theta)$ (on the coordinate $\{0,1\}^V$ in its Hamiltonian form) on edges of $G$ whose both endpoints are not pinned by $\tau$. In particular, when we take $\theta_*=1-\exp(-4\beta)$, then $(1-\theta_*)\otimes\mu^\tau$ is a product measure. 
\end{fact}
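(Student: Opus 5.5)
The Fact is a direct computation from the $\{0,1\}$-parametrization in Fact \ref{fact2690}. I would start from
$$
\mu(\sigma)\propto \exp\Big\{4\beta\sum_{(u,v)\in E}\sigma_u\sigma_v+\sum_u (2h_u-2\beta d_u)\sigma_u\Big\},\qquad \sigma\in\{0,1\}^V.
$$
Note that $m(\sigma)=\sum_{(u,v)\in E}\sigma_u\sigma_v$, since $\sigma_u\sigma_v=\mathbf{1}[\sigma_u=\sigma_v=1]$. Hence the tilt factor can be rewritten as
$$
(1-\theta)^{m(\sigma)}=\exp\Big\{\ln(1-\theta)\sum_{(u,v)\in E}\sigma_u\sigma_v\Big\}.
$$

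The first step handles the tilt alone, before any pinning. Multiplying $\mu$ by this factor produces an Ising measure of the same form, with every edge coefficient changed from $4\beta$ to $4\beta+\ln(1-\theta)$ and all external fields $2h_u-2\beta d_u$ unchanged.

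The second step incorporates the pinning $\tau$, which fixes some set $\Lambda\subset V$ of coordinates; for the edge-field process these are pinned to $1$. Tilting and conditioning commute, because both act by multiplying the weight by a nonnegative function of $\sigma$ and renormalizing. So $(1-\theta)\otimes\mu^\tau$ is the tilted measure restricted to configurations agreeing with $\tau$ on $\Lambda$. Each edge then falls into one of three cases.
\begin{itemize}
\item If both endpoints are free, the edge keeps the quadratic coefficient $4\beta+\ln(1-\theta)$.
\item If exactly one endpoint $u$ is free and the other is pinned to $\tau_v$, the term $\sigma_u\tau_v$ is linear in $\sigma_u$. It is absorbed into the external field at $u$.
\item If both endpoints are pinned, the term is constant and disappears into the normalization.
\end{itemize}
This gives the first claim.

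The third step takes $\theta_*=1-e^{-4\beta}$. Then $\ln(1-\theta_*)=-4\beta$, so every edge with two free endpoints has coefficient $0$. The Hamiltonian on the free coordinates becomes $\sum_{u\notin\Lambda}\tilde h_u\sigma_u$, a sum of single-site terms, where $\tilde h_u$ is the original field plus the contributions absorbed from pinned neighbours. Therefore $(1-\theta_*)\otimes\mu^\tau$ factorizes over the free vertices, with the pinned vertices deterministic, and is a product measure.

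There is no genuine obstacle. The only care needed is to check that tilting commutes with conditioning and that edges to pinned vertices contribute only linear or constant terms.
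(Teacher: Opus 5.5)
Your proposal is correct and follows the same route as the paper. Both rewrite $\mu_G$ in $\{0,1\}$ coordinates via Fact \ref{fact2690}, observe that the tilt $(1-\theta)^{m(\sigma)}=\exp\{\ln(1-\theta)\sum_{(u,v)\in E}\sigma_u\sigma_v\}$ shifts each free edge coefficient from $4\beta$ to $4\beta+\ln(1-\theta)$, and note that this coefficient vanishes at $\theta_*$. You also spell out two details the paper leaves implicit: that tilting commutes with conditioning, and that edges with a pinned endpoint contribute only linear or constant terms, which is what makes the measure at $\theta_*$ a product measure.
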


\begin{proof}
By Fact \ref{fact2690}, in the coordinate $\{0,1\}^V$ the quadratic coefficient of $\mu=\mu_G$ in the Hamiltonian is $4\beta$. For each unpinned edge, the denoising process $Y_t$ shrinks the probability by multiplying by $1-\theta$ before normalizing, which means changing $4\beta$ to $4\beta+\ln(1-\theta)$ in the Hamiltonian coefficient.     
\end{proof}

For sake of completeness, we note here that when $\beta$ is sufficiently small, this model is within the realm of spectral independence where approximate tensorization of variance directly follows. We, however, do not use this estimate as we stop the denoising process at $\theta_*$ where the conditional law is the product measure. 
\begin{lemma}\label{lemma5660} Let $G=(V,E)$ be a graph of maximal degree at most $\Delta$, and for each $(u,v)\in E$ we are given constants $\beta_{uv}$ where $|\beta_{uv}|\leq\beta$ for all edges. Fix an arbitrary external field $h=(h_u)_{u\in V}$, an arbitrary subset $A\subset V$ and configuration $\tau\in\{0,1\}^A$. Then whenever $\beta\leq\frac{1}{\Delta}$, the measure $\mu_{\beta,h}$ defined on $\{0,1\}^{V\setminus A}$ via
$$
\mu_{\beta,h}(\sigma)\propto\exp(\sum_{(u,v)\in E}\beta_{uv}\sigma_u\sigma_v+\sum_{u\in V\setminus A}h_u\sigma_u),\quad\forall\sigma\in\{0,1\}^V:\sigma|_A=\tau
$$satisfies approximate tensorization of variance and approximate tensorization of entropy with constant 2.
\end{lemma}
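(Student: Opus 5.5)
The plan is to verify a Dobrushin-type uniqueness condition with a small influence matrix, uniformly over the pinning $\tau$ and the field $h$. I will then invoke the known implication "Dobrushin condition $\Rightarrow$ approximate tensorization of entropy" (Marton's theorem; see also Caputo--Menz--Tetali \cite{CaputoMenzTetali2015}). Variance will be handled by linearization.

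\textbf{Step 1: reduction.} The pinned measure $\mu_{\beta,h}$ on $\{0,1\}^{V\setminus A}$ is again an Ising model of the same form on the induced graph $G[V\setminus A]$. Each pinned neighbour $w\in A$ of $u$ only shifts the external field: $h_u\mapsto h_u+\sum_{w\in A}\beta_{uw}\tau_w$. The induced graph still has maximal degree at most $\Delta$ and all couplings satisfy $|\beta_{uv}|\le\beta$. So it suffices to treat an unpinned model with arbitrary real field. Note that no sign condition on $\beta_{uv}$ and no control of $h$ is needed, consistent with the statement.

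\textbf{Step 2: influence matrix.} For $u\in V\setminus A$, the conditional law of $\sigma_u$ given the other spins is Bernoulli with success probability $s\bigl(h_u+\sum_{v\sim u}\beta_{uv}\sigma_v\bigr)$, where $s(x)=e^x/(1+e^x)$. Since $|s'|\le 1/4$, flipping $\sigma_v$ changes this probability by at most $|\beta_{uv}|/4\le\beta/4$. Hence the Dobrushin influence $a_{uv}$ of $v$ on $u$ satisfies $a_{uv}\le\beta/4$ for $v\sim u$, and $a_{uv}=0$ otherwise. Every row sum and every column sum of the matrix $(a_{uv})$ is then at most $\Delta\beta/4\le 1/4$ when $\beta\le 1/\Delta$. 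Consequently $\|a\|_{2\to2}\le\sqrt{\|a\|_{1\to1}\|a\|_{\infty\to\infty}}\le 1/4$.

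\textbf{Step 3: entropy.} I will apply Marton's theorem in the form: if the Dobrushin interdependence matrix has $\ell^2$ operator norm at most $\alpha<1$, then approximate tensorization of entropy holds with constant $(1-\alpha)^{-2}$. With $\alpha=1/4$ this gives the constant $16/9\le 2$.

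\textbf{Step 4: variance.} Apply the entropy inequality to $f=1+\varepsilon g$ and let $\varepsilon\to0$. Both sides expand as $\tfrac{\varepsilon^2}{2}$ times the corresponding variance, plus $O(\varepsilon^3)$. This yields approximate tensorization of variance with the same constant. Alternatively, variance could be proved directly by a coupling/contraction argument for the systematic-scan or Glauber chain: a contraction rate of $1-\alpha$ per sweep gives a spectral gap of order $(1-\alpha)/n$, which matches Fact \ref{fact3900}.

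\textbf{Main obstacle.} The main risk is quoting the correct form and constant of the Dobrushin-to-entropy-tensorization result. Some versions give $1/(1-\alpha)$, others $(1-\alpha)^{-2}$, and some require the $\ell^1$ or $\ell^\infty$ norm instead of $\ell^2$. Since every relevant norm here is at most $1/4$, any of these versions gives a constant below $2$. If the strict constant $2$ fails for the chosen reference, I would instead run the paper's vertex-field localization, using Lemma \ref{lemma3500} with Lemma \ref{lemma3760}. The covariance bound in Lemma \ref{lemma88} would follow from the same influence estimate through a Neumann series $\sum_k (\Delta\beta/4)^k\le 4/3$.
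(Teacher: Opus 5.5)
Steps 1 and 2 are fine. Pinning only shifts the fields, and the total-variation influences are at most $\beta/4$, so every norm of the interdependence matrix is at most $\frac14$, uniformly in $h$ and $\tau$.

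The gap is in Step 3. Marton's theorem for finite product spaces (the result that matches your total-variation interdependence matrix) is not of the form ``$\|A\|\le\alpha\Rightarrow$ constant $f(\alpha)$''. It also assumes a lower bound $b$ on the single-site conditional probabilities $\min_{u,\sigma}\mu(\sigma_u\mid\sigma_{-u})$, and its constant deteriorates as $b\to 0$. The reason is that the argument controls a transport/total-variation quantity, and converting that back into relative entropy is only cheap in one direction. In the lemma $h$ is arbitrary, so $b$ decays like $e^{-\max_u|h_u|}$. Step 3 therefore gives no $h$-uniform constant, let alone $2$, and the linearization in Step 4 inherits the same defect.

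Your ``main obstacle'' paragraph checks the $\alpha$-dependence and the choice of norm, but misses this marginal dependence, which is the real obstruction. Field-uniform entropy tensorization for high-temperature Ising models is exactly what required entropic-independence or localization arguments.

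The fallback does not repair this. The vertex-field flow leaves the interactions untouched, so $(1-\theta)*\mu^S$ is never a product measure for $\theta<1$, while $\int_0^\theta\frac{C}{1-t}\,dt\to\infty$ as $\theta\to 1$. You would still need a tensorization bound for an Ising model with an even more extreme field. Moreover, the entropy version of that route (Lemma \ref{lemma4.20}) requires precisely a marginal lower bound $K_{\operatorname{low}}$.

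What is needed is a field-uniform criterion, and this is what the paper uses. In $\pm1$ coordinates (Fact \ref{fact2690}) the pinned model has interaction matrix $\frac14 Q$, with $Q_{uv}=\beta_{uv}$ on edges whose endpoints are both unpinned. The field $h$ and the pinned neighbours only enter the (arbitrary) external field. Row sums give $-\beta\Delta\le\lambda_{\min}(Q)\le\lambda_{\max}(Q)\le\beta\Delta$, so the spectral width of $\frac14 Q$ is at most $\frac{\beta\Delta}{2}\le\frac12$. The entropic-independence theorem \cite{anari2021entropic} (Theorem 12) holds for every external field, and it yields approximate tensorization of entropy and of variance with constant $\frac{1}{1-1/2}=2$.

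If you only wanted the variance statement, your coupling alternative already works uniformly in $h$. With column sums of the influence matrix at most $\frac14$, path coupling contracts the Hamming distance by a factor $1-\frac{3}{4n}$ per Glauber step, where $n=|V\setminus A|$. Hence $\mathbf{gap}\ge\frac{3}{4n}$, and Fact \ref{fact3900} gives constant $\frac43\le 2$. The entropy claim, however, needs the spectral criterion.
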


\begin{proof}
    Let $Q$ denote the quadratic interaction matrix in $\mu_{\beta,h}$ where $Q_{uv}=\beta_{uv}$ if $(u,v)\in E$ and $u\notin A$ and $v\notin A$, and set $Q_{uv}=0$ otherwise. Then $\lambda_{max}(Q)\leq \beta\Delta$ and $\lambda_{min}(Q)\geq -\beta\Delta$, where $\lambda_{min}(Q),\lambda_{max}(Q)$ are the smallest and largest eigenvalues of $Q$. Then via Fact \ref{fact2690}, we can rewrite $\mu_{\beta,h}$ as an Ising model on $\{-1,1\}^V\setminus A$ where the quadratic interaction matrix is $\frac{1}{4}Q$. Then by \cite{anari2021entropic}, Theorem 12, $\mu_{\beta,h}$ satisfies approximate tensorization of variance and entropy with constant $\frac{1}{1-(\lambda_{max}(\frac{1}{4}Q)-\lambda_{min}(\frac{1}{4}Q))}\leq 2$.  
\end{proof}

Now we summarize what we have gotten so far, and what remains to be proven in the remaining sections. We will apply the edge field denoising process $Y_t$ to the RFIM $\mu_G$, where we take \begin{equation}\theta_*=1-\exp(-4\beta)\end{equation} as a threshold value. This value of $\theta_*$ is fixed throughout the paper. We need to upper bound the integral in $R$ in Lemma \ref{lemma3760} up to time $\theta_*$, and 
then tensorization of variance/entropy is free for $\operatorname{Law}(Y_1\mid Y_{\theta_*})$ since the latter is a product measure.
For the integral in $R$, by Proposition \ref{proposition511}, it suffices to control the second correlation matrix for $(1-\theta)\otimes\mu^\tau$ for all $0\leq\theta\leq\theta_*$ and all pinning $\tau$. In the next two sections, we will focus on the operator norm control of that matrix.

\section{The spectral norm of correlation matrix with a random field}\label{apercolationargument}
In this section, we make the following assumption on the external field of RFIM, which is more general than Assumption \ref{primaryassumptionpaper} as we only require $|h_v|$ to be independent, but the signs of $h_v$ are not necessarily independent. This generalization is useful in Theorem \ref{statementtheorem906}. 
\begin{Assumption}\label{thiasssumptionnew} Let  $p_0\in(0,1)$ be given such that $p_0(\Delta-1)<1$, let $K=K(p_0)>0$ be a constant such that
\begin{equation}
    \rho=\rho(\beta,\Delta,K):=\frac{e^{\Delta\beta-K}}{e^{\Delta\beta-K}+e^{-\Delta\beta+K}},
\end{equation} is less than $\frac{p_0}{4}$. Suppose that the RFIM $\mu_G$ with external field $h$ is such that $(|h_u|)_{u\in V}$ are i.i.d. and such that 
$$
\mathbb{P}_h(|h_u|\leq K)<\frac{p_0}{2}.
$$
\end{Assumption}

\begin{Proposition}\label{proposition672}
    Let $(u,v)\in E$ be any edge of the graph $G=(V,E)$ of maximal degree $\Delta$. Assume that Assumption \ref{thiasssumptionnew} holds for the given $p_0\in(0,1)$, then we have, for any $m\geq 1$,
    \begin{equation}\begin{aligned}\label{lines6750}
\mathbb{P}&\left(\sup_{\theta\in[0,\theta_*],\Lambda,\tau_0\in\Omega(\Lambda)}
\sum_{(wz)\in E}|\operatorname{Cor}_{(1-\theta)\otimes\mu^{\tau_0}}^{(2)}(uv,wz)|\geq \Delta m\right)\\&\leq \frac{2}{(1-e^{-\alpha_*})(1-e^{-2\alpha_*})}e^{2\gamma_*}e^{-\alpha_*m},\end{aligned}
    \end{equation}
    where $\gamma_*:=\ln\frac{1-p_0}{p_0(\Delta-2)}$ and $\alpha_*=\frac{1}{2}\log\frac{(\Delta-2)^{\Delta-2}}{p_0(\Delta-1)^{\Delta-1}(1-p_0)^{\Delta-2}}.$
\end{Proposition}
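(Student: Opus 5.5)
We will reduce the row-sum bound to a percolation estimate on the set of ``weak'' vertices, and then bound the size of the weak cluster through the edge $(u,v)$ by counting lattice animals.

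\textbf{Step 1: strong vertices.} Call $w\in V$ \emph{strong} if $|h_w|>K$ and \emph{weak} otherwise. By Assumption \ref{thiasssumptionnew}, vertices are weak independently with probability less than $p_0/2$.

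At a strong vertex $w$, every model $(1-\theta)\otimes\mu^{\tau_0}$ with $\theta\in[0,\theta_*]$ has conditional spin law given all other spins that puts mass at least $1-\rho$ on $\operatorname{sign}(h_w)$. This holds for any neighbouring configuration and any pinning, since in the $\pm1$ coordinates the interaction lies in $[0,\beta]$ along the flow. Because $\rho<p_0/4$, in the grand coupling of Section \ref{howtousecouplinghere} (one uniform $U_w$ per vertex, shared by all $\theta$, $\Lambda$, $\tau_0$ and all conditionings) we can declare $w$ \emph{frozen} when $U_w$ lands in the common $1-\rho$ region. Frozen vertices take the value $\operatorname{sign}(h_w)$ in every coupled copy simultaneously. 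Hence each vertex is independently non-frozen with probability at most $p_0/2+\rho<p_0$. Since this event depends only on $(h_w,U_w)$, the randomness is uniform over $\theta$, $\Lambda$ and $\tau_0$, which is exactly what lets the supremum sit inside the probability.

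\textbf{Step 2: correlations are confined to the non-frozen cluster.} Let $\mathcal{C}$ be the connected component of non-frozen vertices containing $u$ or $v$, together with its outer boundary. Couple $(1-\theta)\otimes\mu^{\tau_0}(\cdot\mid uv)$ with $(1-\theta)\otimes\mu^{\tau_0}$ by sequential sampling started from $u,v$, revealing vertices in breadth-first order. Discrepancies can only propagate through non-frozen vertices, since frozen ones agree in both copies. Therefore $\operatorname{Cor}^{(2)}(uv,wz)=0$ unless $w$ or $z$ lies in $\mathcal{C}$, and in any case $|\operatorname{Cor}^{(2)}(uv,wz)|\le 1$. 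This gives
\[
\sum_{wz}|\operatorname{Cor}^{(2)}(uv,wz)|\le \Delta\,|\mathcal{C}|.
\]
The main obstacle is to make this rigorous uniformly in the conditioning on the edge event $uv$. Conditioning on $uv$ is a pinning of $u,v$ to $1$, so it falls inside the same family of coupled measures. One also has to check that a fixed $U_w$ yields the same output across all measures. For frozen $w$ this holds because the $1-\rho$ region is common to all of them. If the sampling order causes trouble, I would instead run a disagreement-percolation argument (van den Berg--Maes), which gives the same domination directly.

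\textbf{Step 3: cluster-size tail.} The event $|\mathcal{C}|\ge m$ requires a connected set of size $k\ge m$ containing $u$ or $v$ that consists entirely of non-frozen vertices. Counting connected subtrees with their boundaries in a graph of maximal degree $\Delta$ (the Galton--Watson / lattice-animal bound), the number of animals of size $k$ with boundary size $b$ is at most about $\binom{(\Delta-1)k}{k}$. Combining this with the weight $p_0^k(1-p_0)^{b}$ and summing via Stirling gives a bound of the form $e^{2\gamma_*}e^{-2\alpha_* k}$ per rooted size. The exponent $\alpha_*$ is precisely the Stirling rate of $\frac{(\Delta-1)^{\Delta-1}}{(\Delta-2)^{\Delta-2}}p_0(1-p_0)^{\Delta-2}$, and the prefactor $e^{2\gamma_*}$ together with the geometric factors $(1-e^{-\alpha_*})^{-1}$ and $(1-e^{-2\alpha_*})^{-1}$ comes from summing over $k\ge m$ and over the boundary sizes. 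The factor $2$ accounts for rooting at either $u$ or $v$. Since the coupling in Step 2 holds simultaneously for all $\theta$, $\Lambda$ and $\tau_0$, this bounds the supremum and yields \eqref{lines6750}.
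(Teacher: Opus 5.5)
You have a genuine gap in the passage from Step 2 to Step 3. Your frozen-vertex construction and the shared-uniform coupling are essentially the paper's, and using $p_0/2+\rho<p_0$ is fine. The problem is that, for fixed $\theta,\Lambda,\tau_0$, the row sum $\sum_{wz}|\operatorname{Cor}^{(2)}(uv,wz)|$ is a deterministic function of $h$, whereas your $\mathcal{C}$ depends on the coupling uniforms $U$.

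So the coupling cannot give ``$\operatorname{Cor}^{(2)}(uv,wz)=0$ unless $w$ or $z\in\mathcal{C}$''. That claim is false for a fixed $U$: freeze every strong vertex, and a strong edge adjacent to $(u,v)$ still has nonzero correlation when $\theta<\theta_*$. Nor does the coupling give $\sum_{wz}|\operatorname{Cor}^{(2)}(uv,wz)|\le\Delta|\mathcal{C}|$. What it gives is only the $U$-average:
\[
|\operatorname{Cor}^{(2)}(uv,wz)|\le \mathbb{P}_U(\sigma^1_w\neq\sigma^2_w)+\mathbb{P}_U(\sigma^1_z\neq\sigma^2_z),
\qquad
\sup_{\theta,\Lambda,\tau_0}\sum_{wz}|\operatorname{Cor}^{(2)}(uv,wz)|\le\Delta\,\mathbb{E}_U|\mathcal{C}_{h,U}|.
\]
The event you must control is therefore $\{\mathbb{E}_U|\mathcal{C}_{h,U}|\ge m\}$, in $h$ alone. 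A tail bound for $|\mathcal{C}|$ under the joint law of $(h,U)$ does not transfer to it directly; Markov's inequality gives only $O(1/m)$. The paper closes this with exponential Chebyshev and Jensen,
\[
\mathbb{P}_h\bigl(\mathbb{E}_U|\mathcal{C}_{h,U}|\ge m\bigr)\le e^{-sm}\,\mathbb{E}_h\, e^{s\,\mathbb{E}_U|\mathcal{C}_{h,U}|}\le e^{-sm}\,\mathbb{E}_{h,U}\, e^{s|\mathcal{C}_{h,U}|},
\]
so Step 3 must produce an annealed exponential moment of the cluster size at $s=\alpha_*$, not a tail estimate. Because this bound does not depend on $(\theta,\Lambda,\tau_0)$, it is also what legitimately handles the supremum.

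This also explains the constants, which you attribute incorrectly:
\begin{itemize}
\item The cluster tail decays at rate $\xi_*=2\alpha_*$, so $\alpha_*$ is half the Stirling rate, not the rate itself.
\item The factor $(1-e^{-2\alpha_*})^{-1}$ comes from summing that tail over $x\ge m$, and $(1-e^{-\alpha_*})^{-1}$ from evaluating $\mathbb{E}\,e^{\alpha_*|\mathcal{C}|}$ against it.
\item The factor $2$ and $e^{2\gamma_*}$ come from the two-root Otter--Dwass formula $\mathbb{P}(T^{(2)}=x)=\frac{2}{x}\mathbb{P}(\operatorname{Bin}((\Delta-1)x,p_0)=x-2)$ followed by Chernoff.
\item The factor $e^{-\alpha_*m}$ comes from the Chebyshev step. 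Nothing comes from summing over boundary sizes.
\end{itemize}

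Your animal count with weights $p_0^k(1-p_0)^b$ is also not enough by itself on a general graph, since a connected $k$-set need not have $(\Delta-2)k+2$ boundary vertices. The clean route is the paper's: dominate the cluster of $(u,v)$ by a Galton--Watson forest with two roots and $\operatorname{Bin}(\Delta-1,p_0)$ offspring. Here $u$ and $v$ must be forced open, since they can disagree even if ``frozen'', being pinned in one copy.

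A minor point in Step 1: with $4\beta'=4\beta+\ln(1-\theta)$, the tilt also shifts the $\pm1$ field at $w$ by $-(\beta-\beta')d_w$. The $1-\rho$ bound survives because the conditional log-odds still lies in $2h_w\pm2\beta\Delta$, but your justification should account for this shift.
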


\begin{proof}
    For fixed edge $(u,v)$ and the Ising measure $\nu:=(1-\theta)\otimes \mu^{\tau_0}$, we construct a coupling of two spins $\sigma^1\sim \nu$ and $\sigma^2\sim\nu(\cdot\mid uv)$, where in $\sigma^2$ the vertices $u$, $v$ are both fixed to be 1 and then we reveal $\sigma^1$ at $u,v$ and then reveal both $\sigma^1,\sigma^2$ at vertices in $V\setminus\{u,v\}$. Assign uniform random variables $U_x,x\in V$ at each vertex of $V$, and generate $(\sigma^1)_u$ by the given probability law of $\nu$, then generate $(\sigma^1)_v$ by the given probability law of $\nu$ conditioning on $\sigma^1_u$. Then we iteratively reveal the vertices of distance 1 to $(u,v)$ in both configurations $\sigma^1,\sigma^2$ in any given sequence, and update the value of $\sigma^1$ and $\sigma^2$ with respect to the probability laws $\nu$ and $\nu(\cdot\mid uv)$, conditioning on the vertices already revealed. Use the same uniform random variable to update each vertex. Note that when we update both $\sigma^1$ and $\sigma^2$ at $w\neq u,v$, then when $|h_w|>K$, then the probability that the updated spins in $\sigma^1,\sigma^2$ have the sign of $h_w$ is at least $1-\frac{p_0}{4}$, by Assumption \ref{thiasssumptionnew}. (the probability that $\sigma^i_w$ has the same sign as $h_w$ can be computed in the original coordinate $\{-1,1\}^V$ where the expression $\rho(\beta,\Delta,K)$ in Assumption \ref{thiasssumptionnew} is more apparent) Therefore, the subset of $V$ where $\sigma^1$ and $\sigma^2$ disagree is controlled by the connected component of $(u,v)$ in the percolation process \begin{equation}\label{ateachsite}\{|h_x|\leq K\}\cup \{\min(U_x,1-U_x)\leq \frac{p_0}{4}\}\end{equation} at each $x\in V$, since outside this connected component, $\sigma^1$ and $\sigma^2$ are updated via the identity coupling. By definition of total variation distance, we have 
    $$
|\operatorname{Cor}_\nu^{(2)}(uv,wz)|\leq \mathbb{P}(\sigma_{wz}^1\neq\sigma_{wz}^2)\leq\mathbb{P}(\sigma_w^1\neq\sigma_w^2)+\mathbb{P}(\sigma_z^1\neq\sigma_z^2)
    $$ for our specific coupling (here $\sigma_{wz}^1\neq\sigma_{wz}^2$ means $\sigma_w^1\neq\sigma_w^2$ or $\sigma_z^1\neq\sigma_z^2$), and thus since each vertex is adjacent to at most $\Delta$ edges, we multiply by $\Delta$ and write
    $$
\sum_{(wz)\in E}|\operatorname{Cor}_\nu^{(2)}(uv,wz)|\leq\Delta\cdot \mathbb{E}_U|\text{connected component of  $(u,v)$ in $\operatorname{Ber}(p_0)$ percolation}|,
    $$where the left hand side is a function of $h$ only and the right hand side takes expectation with respect to the uniform measure $(U_x,x\in V)$ in defining the coupling, and the percolation is defined in \eqref{ateachsite}.
Let $\mathcal{C}_{h,U}(u,v)$ denote the component of $(u,v)$ in this percolation, and denote by $S_{uv}(h):=\frac{1}{\Delta}\sum_{(wz)\in E}|\operatorname{Cor}_\nu^{(2)}(uv,wz)|$ (with implicit dependence on $\theta,\Lambda,\tau_0$.) Then for any $s>0$,
$$
\mathbb{P}_h(S_{uv}(h)\geq m)\leq e^{-sm}\mathbb{E}_{h}\exp(s\mathbb{E}_U|\mathcal{C}_{h,U}|)\leq e^{-sm}\mathbb{E}_{h,U}\exp(s|\mathcal{C}_{h,U}|),
$$applying Jensen in the last step.
    
    Moreover, for any $\theta\in[0,\theta_*]$, any $\Lambda$ and any boundary condition $\tau_0$, we use the same coupling for $\nu$ and $\nu(\cdot\mid uv)$ via the same uniform distribution, and thus the discrepancy region is simultaneously controlled by the same Bernoulli percolation \eqref{ateachsite}. 
That is,
$$
\mathbb{P}_h(\sup_{\theta\in[0,\theta_*],\Lambda,\tau_0}S_{uv}(h)\geq m)\leq e^{-sm}\mathbb{E}_{h,U}\exp(s|\mathcal{C}_{h,U}|).
$$
In a subcritical Bernoulli percolation, the size of the connected component has exponential tails,
    see equation \eqref{lines7000}. Taking $s=\alpha_*$ defined below and using equation \eqref{lines7000}, we deduce that \eqref{lines6750} holds and thus the Proposition is proven. 
    
    The rest of the proof is the derivation of \eqref{lines7000} for the exponential moments for the size of the connected component. We consider a Bernoulli $(p_0)$-percolation on $G$ whose law is denoted by $\mathbb{P}$, and we use $\mathcal{C}(u,v)$ denote the connected cluster in this percolation where $(u,v)$ are forced open. Then we explore the open cluster of the edge $(u,v)$. Since the maximal degree is at most $\Delta$ and $(u,v)$ are forced open, the connected cluster is stochastically bounded by a Galton-Watson forest with two initial particles and offspring distribution $\operatorname{Bin}(\Delta-1,p_0)$, see for example \cite{Grimmett1999Percolation}. By the Otter-Dwass formula \cite{Dwass1969TotalProgeny} (and see \cite{Pitman2006CSP} for the $r=2$ version), if we let $T^{(2)}$ denote the total progeny of the forest, then 
$$
\mathbb{P}(T^{(2)}=x)=\frac{2}{x}\mathbb{P}(\operatorname{Bin}((\Delta-1)x,p_0)=x-2),\quad \forall x\in\mathbb{N}_+.
$$ By Chernoff, whenever $p_0(\Delta-1)<1$, we have
$$
\mathbb{P}(T^{(2)}=x)\leq\frac{2}{x}(\frac{1-p_0}{p_0(\Delta-2)})^2e^{-\xi_*x},
$$where we take 
$\xi_*=\log\frac{(\Delta-2)^{\Delta-2}}{p_0(\Delta-1)^{\Delta-1}(1-p_0)^{\Delta-2}}.$ Then we sum up over integers $x\geq m$ and get 
\begin{equation}\label{lines7000none}
\mathbb{P}(|\mathcal{C}(u,v)|\geq m\mid (u,v)\text{ open })\leq \frac{2}{1-e^{-\xi_*}}(\frac{1-p_0}{p_0(\Delta-2)})^2e^{-\xi_* m}.
\end{equation}Then we can compute the exponential moment of $\mathcal{C}(u,v)$: take $\alpha_*=\frac{1}{2}\xi_*$, then
\begin{equation}\label{lines7000}
\mathbb{E}[\exp(\alpha_*|\mathcal{C}(u,v)|)\mid (u,v)\text{ open }]\leq \frac{2}{(1-e^{-2\alpha_*})(1-e^{-\alpha_*})}(\frac{1-p_0}{p_0(\Delta-2)})^2.
\end{equation}
\end{proof}

Via exactly the same argument, we have the following estimate:
\begin{Proposition}\label{proposition672newtheorem}
    Let $(w,z)\in E$ be any edge of the graph $G=(V,E)$ of maximal degree $\Delta$. Assume that Assumption \ref{thiasssumptionnew} holds for the given $p_0\in(0,1)$, then we have, for any $m\geq 1$,
    \begin{equation}\label{lines6750new}\begin{aligned}&
\mathbb{P}\left(\sup_{\theta\in[0,\theta_*],\Lambda,\tau_0\in\Omega(\Lambda)}
\sum_{(uv)\in E}|\operatorname{Cor}_{(1-\theta)\otimes\mu^{\tau_0}}^{(2)}(uv,wz)|\geq 2\Delta m\right)\\&\leq \frac{2}{(1-e^{-\alpha_*})(1-e^{-2\alpha_*})}e^{2\gamma_*}e^{-\alpha_*m}.\end{aligned}
    \end{equation}
\end{Proposition}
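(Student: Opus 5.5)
The plan is to run the proof of Proposition \ref{proposition672} with the roles of the two edges exchanged: we fix the column edge $(w,z)$ and sum over the row edges $(u,v)$. The obstacle is that $\operatorname{Cor}^{(2)}_\nu(uv,wz)=\nu(wz\mid uv)-\nu(wz)$ conditions on the varying edge $uv$. So the coupling from that proof, which was built around the conditioned edge, would have to be centered at a different edge for each term.

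To fix this, we symmetrize. For $\nu=(1-\theta)\otimes\mu^{\tau_0}$ with $\nu(uv)>0$,
$$
\nu(wz\mid uv)-\nu(wz)=\frac{\nu(wz)}{\nu(uv)}\bigl(\nu(uv\mid wz)-\nu(uv)\bigr),
$$
so $\operatorname{Cor}^{(2)}_\nu(uv,wz)=\frac{\nu(wz)}{\nu(uv)}\operatorname{Cor}^{(2)}_\nu(wz,uv)$. This prefactor cannot be bounded uniformly, so we avoid it. Instead we bound $|\operatorname{Cor}^{(2)}_\nu(uv,wz)|$ by a disagreement probability in a coupling centered at $(w,z)$.

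We write $\nu(wz\mid uv)-\nu(wz)$ as a telescoping comparison. The idea is that conditioning on $uv$ only affects $\sigma_{wz}$ if the influence of pinning $u,v$ reaches $\{w,z\}$. In the grand coupling of Section \ref{howtousecouplinghere}, take $\sigma^1\sim\nu$ and $\sigma^2\sim\nu(\cdot\mid uv)$, revealing vertices in breadth-first order from $\{w,z\}$ outward, with common uniforms $U_x$. Vertices outside the union of the percolation clusters \eqref{ateachsite} meeting $\{u,v\}$ receive identical values. Hence $\sigma^1_{wz}\neq\sigma^2_{wz}$ forces $w$ or $z$ to be connected to $u$ or $v$ in the percolation \eqref{ateachsite}, where the endpoints $u,v$ are forced open. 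Consequently
$$
|\operatorname{Cor}^{(2)}_\nu(uv,wz)|\le \mathbb{P}_U\bigl(\{u,v\}\leftrightarrow\{w,z\}\bigr),
$$
and the right-hand side is a symmetric connectivity event in $(u,v)$ and $(w,z)$.

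Next we sum over the edges $(u,v)$. An edge $(u,v)$ with $u\in\mathcal{C}(w,z)$ or $v\in\mathcal{C}(w,z)$ is incident to a vertex of the cluster of $(w,z)$; here the cluster is taken with $(w,z)$ forced open, and is enlarged by the forced-open $u,v$, which adds at most a neighbour layer. Each vertex has at most $\Delta$ incident edges, so the count is at most $2\Delta\,\mathbb{E}_U|\mathcal{C}_{h,U}(w,z)|$. The factor $2$ accounts for $u$ or $v$ being the attaching endpoint, together with the forced opening of the far endpoint. This explains the constant $2\Delta m$ in \eqref{lines6750new}.

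It remains to convert this into the probability bound. The coupling is uniform in $\theta\in[0,\theta_*]$, $\Lambda$ and $\tau_0$, since Assumption \ref{thiasssumptionnew} gives the $1-\frac{p_0}{4}$ sign bias at every $x$ with $|h_x|>K$. So we can apply the exponential Chebyshev bound and Jensen exactly as before:
$$
\mathbb{P}_h(\sup S\ge m)\le e^{-\alpha_* m}\,\mathbb{E}_{h,U}e^{\alpha_*|\mathcal{C}(w,z)|}.
$$
We then invoke the Galton--Watson exponential moment bound \eqref{lines7000}. The main obstacle is the coupling step. When vertices are revealed outward from $\{w,z\}$ while the pinning sits at the distant $\{u,v\}$, the conditionals at early vertices already depend on the pinning through unrevealed spins. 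So the claim that disagreement propagates only along open paths must be justified, for instance by revealing outward from $\{u,v\}$ as in the original proof and then observing that the event $\{w,z\}\cap\mathcal{C}(u,v)\neq\emptyset$ is symmetric under swapping the roots of the two-rooted exploration.
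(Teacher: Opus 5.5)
Once you adopt the fix in your last paragraph, this is the paper's proof. For each row edge $(u,v)$ you use the $(u,v)$-centered coupling of Proposition \ref{proposition672}, with the same uniforms $U_x$ for every $(u,v)$, $\theta$, $\Lambda$, $\tau_0$. You bound $|\operatorname{Cor}^{(2)}_\nu(uv,wz)|$ by the probability that $w$ or $z$ lies in $\mathcal{C}_{h,U}(u,v)$. You then swap roots to count edges around $\mathcal{C}_{h,U}(w,z)$, and finish with Chernoff, Jensen and \eqref{lines7000}. The breadth-first-from-$\{w,z\}$ coupling of your third paragraph should be deleted rather than repaired. As you suspect, it does not confine disagreement to open paths from $\{u,v\}$: an open $w$ can already disagree at the first step, because its marginals under $\nu$ and $\nu(\cdot\mid uv)$ differ.

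There is one real caveat, which the paper's proof shares, since it writes $\mathbf{1}((wz)\in\mathcal{C}(uv))=\mathbf{1}((uv)\in\mathcal{C}(wz))$. Because the roots are forced open, the root swap is not a symmetry. From $w\in\mathcal{C}(u,v)$ you only get that $u$ or $v$ lies in $\mathcal{C}(w,z)\cup\partial\mathcal{C}(w,z)$. The honest count is therefore $\sum_{(u,v)}\mathbf{1}(\{w,z\}\cap\mathcal{C}(u,v)\neq\emptyset)\le\Delta\,|\mathcal{C}(w,z)\cup\partial\mathcal{C}(w,z)|\le\Delta(\Delta+1)|\mathcal{C}(w,z)|$.

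This loss actually occurs. Suppose $w$ is the only open site of \eqref{ateachsite} near $(w,z)$. Then $\mathcal{C}(w,z)=\{w,z\}$, yet every edge with an endpoint equal or adjacent to $w$ has $w\in\mathcal{C}(u,v)$. That can be up to $\Delta^2$ edges, which exceeds $2\Delta|\mathcal{C}(w,z)|=4\Delta$ once $\Delta\geq 5$. You noticed the extra neighbour layer yourself, but your explanation of the factor $2$ does not absorb it.

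The argument therefore proves the statement with $2\Delta m$ replaced by $\Delta(\Delta+1)m$. This changes only constants in Proposition \ref{proposition4.3} and downstream, not the exponential tail.
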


\begin{proof}
    Consider the same Bernoulli percolation process as in \eqref{ateachsite}, and for each $\theta\in(0,\theta_*)$ and pinning $\tau_0$, use the same uniform coupling to generate $\sigma^1\sim\nu:=(1-\theta)\otimes\mu^{\tau_0}$ and $\sigma^2\sim \nu(\cdot\mid wz)$. By the same argument in the proof of Proposition \ref{proposition672}, we have
    $$\begin{aligned}
&\sum_{(uv)\in E}|\operatorname{Cor}_\nu^{(2)}(uv,wz)|\\&\leq 2\mathbb{E}_U\sum_{(uv)\in E}\mathbf{1}((wz)\text{ is in the connected component of  }(uv)\text{ in this percolation})\\&=2\mathbb{E}_U\sum_{(uv)\in E}\mathbf{1}((uv)\text{ is in the connected component of  }(wz)\text{ in this percolation})\\&\leq2\Delta\cdot \mathbb{E}_U|\text{connected component of  $(wz)$ in $\operatorname{Ber}(p_0)$ percolation}|.
    \end{aligned}$$
    Denote by 
$S_{wz}^{\operatorname{col}}(h):=\frac{1}{2\Delta}\sum_{(uv)\in E}|\operatorname{Cor}_\nu^{(2)}(uv,wz)|.
$
    Then 
    $$
S_{wz}^{\operatorname{col}}(h)\leq\mathbb{E}_U|\mathcal{C}_{h,U}(w,z)|.
    $$
    Then the same Chernoff bound and exponential moment applies, concluding the proof.
\end{proof}

For a square matrix $A\in\mathbb{R}^{n\times n}$, its operator norm satisfies the following interpolation inequality
\begin{equation}\label{interpolationnorm}  \|A\|\leq\sqrt{\left(\sup_{i\in[n]}\sum_{j=1}^n|a_{ij|}\right)\left(\sup_{j\in[n]}\sum_{i=1}^n|a_{ij|}\right)}.
\end{equation}
Taking a union bound in Proposition \ref{proposition672}, we obtain an operator norm upper bound for all Ising models $(1-\theta)\otimes \mu^{\tau_0}$:

\begin{Proposition}\label{proposition4.3}
Under the same assumption in Proposition \ref{proposition672},
with probability at least $1-O_{\beta,\Delta,p_0}(\frac{1}{|V|})$ over the randomness in the external field $h$, 
$$
\sup_{\theta\in[0,\theta_*],\Lambda,\tau_0\in\Omega(\Lambda)}\|\operatorname{Cor}^{(2)}_{(1-\theta)\otimes\mu^{\tau_0}}\|_{op}\leq \frac{4\Delta\ln|V|}{\alpha_*}.
$$
\end{Proposition}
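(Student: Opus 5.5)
We combine the row-sum bound of Proposition \ref{proposition672}, the column-sum bound of Proposition \ref{proposition672newtheorem}, and the interpolation inequality \eqref{interpolationnorm}, with a union bound over edges. The key point is that both propositions already place the supremum over $\theta\in[0,\theta_*]$, $\Lambda$ and $\tau_0$ \emph{inside} the probability. This comes from the grand coupling through the single percolation \eqref{ateachsite}. So no union bound over these uncountably many tiltings and pinnings is needed; we only need a union bound over the at most $\Delta|V|/2$ edges of $G$.

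\textbf{Step 1 (rows).} Apply Proposition \ref{proposition672} with $m=\frac{2\ln|V|}{\alpha_*}$. The failure probability for a fixed edge $(u,v)$ is at most
\[
\frac{2e^{2\gamma_*}}{(1-e^{-\alpha_*})(1-e^{-2\alpha_*})}\,|V|^{-2}.
\]
A union bound over the at most $\Delta|V|/2$ edges gives the following. With probability $1-O_{\Delta,p_0}(|V|^{-1})$, every row sum of $\operatorname{Cor}^{(2)}_{(1-\theta)\otimes\mu^{\tau_0}}$ is at most $\frac{2\Delta\ln|V|}{\alpha_*}$, simultaneously for all $\theta,\Lambda,\tau_0$. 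The constants depend only on $\Delta,p_0$ through $\gamma_*,\alpha_*$. We still write $O_{\beta,\Delta,p_0}$, since $K$, and hence the assumption, involves $\beta$.

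\textbf{Step 2 (columns).} Apply Proposition \ref{proposition672newtheorem} with the same $m$ and the same union bound over edges $(w,z)$. With probability $1-O(|V|^{-1})$, every column sum is at most $2\Delta m=\frac{4\Delta\ln|V|}{\alpha_*}$, uniformly in $\theta,\Lambda,\tau_0$.

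\textbf{Step 3 (interpolation).} On the intersection of the two events, which still has probability $1-O(|V|^{-1})$, inequality \eqref{interpolationnorm} gives, uniformly,
\[
\|\operatorname{Cor}^{(2)}\|_{op}\le\sqrt{\frac{2\Delta\ln|V|}{\alpha_*}\cdot\frac{4\Delta\ln|V|}{\alpha_*}}=\frac{2\sqrt2\,\Delta\ln|V|}{\alpha_*}\le\frac{4\Delta\ln|V|}{\alpha_*},
\]
which is the claim.

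The only delicate point is the hypothesis $m\ge1$ in the propositions, which requires $\ln|V|\ge\alpha_*/2$. For smaller $|V|$ the probability statement is trivially absorbed into the $O(1/|V|)$ constant. One should also note that the operator norm bound for the nonsymmetric matrix controls $\lambda_{\max}$, as needed later in Corollary \ref{followingcriterion}. I expect no real obstacle here; the substance was already handled by the uniform-in-pinning coupling.
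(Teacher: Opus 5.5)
Your proposal is correct and follows the paper's proof: take $m=\frac{2\ln|V|}{\alpha_*}$ in both Proposition \ref{proposition672} and Proposition \ref{proposition672newtheorem}, union bound over edges, and apply the interpolation inequality \eqref{interpolationnorm}. The only difference is that the paper bounds both row and column sums by $2\Delta m$, whereas you keep the sharper row bound $\Delta m$; this gives the slightly better constant $2\sqrt2$ in place of $4$, which still implies the stated bound.
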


\begin{proof}
    We take $m=\frac{2\ln|V|}{\alpha_*}$ in the estimate \eqref{lines6750} for each edge $(u,v)$ and take the same value of $m$ in the estimate \eqref{lines6750new} for each edge $(w,z)$. Then with probability at least $1-2|V|\Delta\cdot \frac{2}{(1-e^{-\alpha_*})(1-e^{-2\alpha_*})}e^{2\gamma_*} \frac{1}{|V|^2}$, we have
    $$
 \sup_{(uv)\in E}\sum_{(wz)\in E}|\operatorname{Cor}^{(2)}_{(1-\theta)\otimes\mu^{\tau_0}}(uv,wz)|, \sup_{(wz)\in E}\sum_{(uv)\in E}|\operatorname{Cor}^{(2)}_{(1-\theta)\otimes\mu^{\tau_0}}(uv,wz)|\leq 2\Delta m.
    $$Taking these two estimates into \eqref{interpolationnorm} for $A=\operatorname{Cor}^{(2)}_{(1-\theta)\otimes\mu^{\tau_0}}$ completes the proof.\end{proof}

\begin{corollary}\label{2corollary44} 
    Under the same assumption in Proposition \ref{proposition672}, with $\mathbb{P}_h$-probability at least $1-O_{\beta,\Delta,p_0}(\frac{1}{|V|})$, the edge field denoising process $(Y_t)_{t\in[0,1]}$ satisfies $R$- approximate conservation of variance up to time $\theta_*$ with constant 
    $$
R\leq\exp\left(\frac{16\beta\Delta\ln|V|}{\alpha_*}\right).
    $$
    Moreover, the RFIM measure $\mu_G$ satisfies approximate tensorization of variance with constant $R$.
\end{corollary}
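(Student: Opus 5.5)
The plan is to work on the high-probability event of Proposition \ref{proposition4.3}. On that event, the operator norm of every tilted, pinned second-order correlation matrix is controlled simultaneously. From there, stability of the denoising process, conservation of variance and tensorization at the terminal time follow by chaining the lemmas of Section \ref{sections344}. Set
$$
C_0:=\frac{4\Delta\ln|V|}{\alpha_*},
$$
and let $\mathcal{G}$ be the event of Proposition \ref{proposition4.3}. Then $\mathbb{P}_h(\mathcal{G})\ge 1-O_{\beta,\Delta,p_0}(1/|V|)$, and on $\mathcal{G}$ we have $\|\operatorname{Cor}^{(2)}_{(1-\theta)\otimes\mu^{\tau_0}}\|_{op}\le C_0$ for all $\theta\in[0,\theta_*]$, all $\Lambda$ and all feasible $\tau_0$. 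Everything below is deterministic on $\mathcal{G}$, for a fixed realization of $h$.

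\emph{Step 1 (spectral stability).} By the remark after Definition \ref{definition532}, the eigenvalues of $\operatorname{Cor}^{(2)}$ are real. Any real eigenvalue is bounded in absolute value by the operator norm, so $\lambda_{\max}(\operatorname{Cor}^{(2)}_{(1-\theta)\otimes\mu^{\tau_0}})\le C_0$ for every $\theta\in[0,\theta_*]$ and every pinning. Corollary \ref{followingcriterion} then shows that the edge-field denoising process $(Y_t)$ for $\mu_G$ is spectrally ($\chi^2$-entropically) stable with rate $C_0$ at every time $\theta\in(0,\theta_*]$. At $\theta=0$ there is no pinning and no tilt, so this case is covered by the same bound.

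To apply Lemma \ref{lemma3760}, which is stated with a rate function on all of $[0,1]$, I would take $C(t)=C_0$ on $[0,\theta_*]$ and any finite rate afterwards. Alternatively, one can observe that the proof of that lemma only integrates the stability inequality on $[0,\theta]$, so only the rate on $[0,\theta_*]$ matters. I expect this bookkeeping, together with checking that the pinnings arising from $Y_\theta$ are among the $(\Lambda,\tau_0)$ covered by Proposition \ref{proposition4.3}, to be the only delicate point. Pinning the edge events in $Y_\theta$ amounts to pinning the endpoint vertices to $1$, which is one of the allowed pinnings.

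\emph{Step 2 (conservation constant).} Lemma \ref{lemma3760} with $\theta=\theta_*=1-e^{-4\beta}$ gives
$$
R=\exp\Big(\int_0^{\theta_*}\frac{C_0}{1-t}\,dt\Big)=\exp\big(-C_0\ln(1-\theta_*)\big)=\exp(4\beta C_0)=\exp\Big(\frac{16\beta\Delta\ln|V|}{\alpha_*}\Big),
$$
which is exactly the claimed bound.

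\emph{Step 3 (terminal tensorization).} By Proposition \ref{proposition511}, $\operatorname{Law}(Y_1\mid Y_{\theta_*}=S)=(1-\theta_*)\otimes\mu^S$. By the Fact preceding Lemma \ref{lemma5660}, this measure has quadratic coefficient $4\beta+\ln(1-\theta_*)=0$ on unpinned edges. It is therefore a product measure on the unpinned vertices, with the pinned coordinates frozen. A product measure satisfies $1$-approximate tensorization of variance, by the Efron--Stein inequality. Lemma \ref{lemma3500} with $K=1$ then yields $R$-approximate tensorization of variance for $\mu_G$ on $\mathcal{G}$, which completes the proof.
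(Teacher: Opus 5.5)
Your proposal is correct and follows the paper's proof essentially step for step. On the event of Proposition \ref{proposition4.3}, Corollary \ref{followingcriterion} and Lemma \ref{lemma3760} with $\int_0^{\theta_*}\frac{dt}{1-t}=4\beta$ give the conservation constant, and Lemma \ref{lemma3500} applied to the product-measure posterior $\operatorname{Law}(Y_1\mid Y_{\theta_*})$ gives tensorization of variance for $\mu_G$; you simply make explicit what the paper leaves implicit, namely $\lambda_{\max}\le\|\cdot\|_{op}$ for a matrix with real spectrum, the rate bookkeeping beyond $\theta_*$, and the Efron--Stein constant $K=1$ at the terminal time.
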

\begin{proof} Note that $\int_0^{\theta_*}\frac{dt}{1-t}=4\beta$.
    The claim on $(Y_t)$ follows directly from Proposition \ref{proposition4.3}, Lemma \ref{lemma3760} and
Corollary \ref{followingcriterion}. The $R$-approximate tensorization of variance of $\mu_G$ then follows from Lemma \ref{lemma3500}, since the posterior $\operatorname{Law(}Y_1\mid Y_{\theta_*})$ is a product measure. \end{proof}

Now we can conclude the proof of Theorem \ref{theorem12121}:

\begin{proof}[\proofname\ of Theorem \ref{theorem12121}] The claim on Poincaré constant follows from Fact \ref{fact3900} combined with Corollary
    \ref{2corollary44}.
The claim on mixing times follows from Fact \ref{firststandardfact}.
\end{proof}

\section{Approximate tensorization of entropy and MLSI}
This section proves Theorem \ref{mlsitheorembound} on MLSI for RFIM. We first recall an equivalent condition of entropic stability from \cite{chen2026edge}, which is the entropic analogue of Lemma \ref{lemma88}:

\begin{Definition}
    Consider a distribution $\mu$ on $\{0,1\}^m$. Then we say that $\mu$ is $C$-entropically stable if for any absolutely continuous $\nu\ll\mu$,
$$
\sum_{i=1}^m\nu(i)\log\frac{\nu(i)}{\mu(i)}-(\nu(i)-\mu(i))\leq CD_{KL}(\nu\mid\mid\mu).
$$We let $\operatorname{ES}(\mu)$ denote the minimal $C>0$ such that $\mu$ is $C$-entropically stable.
\end{Definition}
Assuming a lower bound on the marginal density, entropic stability can be deduced from spectral stability for the vertex field denoising process:
\begin{lemma}\label{lemma4.20}(\cite{chen2026edge}, Lemma 7.15)
    Let $\theta\in(0,1)$ and given constants $\eta_{\operatorname{op}}\geq 1,K_{\operatorname{low}}\geq 1$. Let $\mu$ be a probability distribution on $\{0,1\}^m$. Assume that for any $t\in[0,\theta]$ and every feasible pinning $\tau=\mathbf{1}_S$, where $S\subseteq [m]$, the distribution $\nu:=(1-t)*\mu^\tau$ satisfies
    \begin{itemize}
        \item Marginal bound: $\nu(i)\geq\frac{1}{K_{\operatorname{low}}}$ for all $i\in[m]$;
        \item Spectral stability: $\lambda_{max}(\operatorname{Cor}_\nu)\leq\eta_{\operatorname{op}}$,
    \end{itemize}
    Then if we let $L:=(K_{\operatorname{low}}+1)(\eta_{\operatorname{op}}-1)+1$, then $\mu$ satisfies
$$
\operatorname{ES}(\mu)\leq\frac{L}{1-(1-\theta)^L}
.$$ Meanwhile, the vertex-field denoising process $(Y_t)_{t\in[0,1]}$ from $\mu$ satisfies $R$-approximate conservation of entropy up to final time $\theta$ with 
$$
R=1+\frac{\max_\tau\operatorname{ES}((1-\theta)*\mu^\tau)}{L(1-\theta)^L}
.$$
\end{lemma}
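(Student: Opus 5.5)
The plan is to run the vertex-field denoising process $(Y_t)_{t\in[0,1]}$ for $\mu$ and track the entropy functional
$$g(t):=\mathbb{E}\bigl[\operatorname{Ent}^{\mathbf{KL}}[F\mid Y_t]\bigr],\qquad F=f(Y_1).$$
Both conclusions should then follow from a single differential inequality for the conditional posteriors $\nu_t^S:=(1-t)*\mu^S$, $S=Y_t$. The two hypotheses are used in one local step. The marginal lower bound converts the spectral bound $\lambda_{max}(\operatorname{Cor}_\nu)\le\eta_{\operatorname{op}}$ into an \emph{entropic} (rather than variance) bound on the instantaneous rate at which a coordinate being revealed (pinned to $1$) decreases $\operatorname{Ent}[F\mid Y_t]$. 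The constant $L=(K_{\operatorname{low}}+1)(\eta_{\operatorname{op}}-1)+1$ is then the entropic rate.

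\textbf{Step 1: the local bound.} Fix $t<\theta$ and a feasible pinning $S$, and set $\nu=\nu_t^S$. Let $\pi$ be the tilt of $\nu$ by the function $F/\mathbb{E}_\nu F$. By the analogue of Lemma \ref{lemma88} for $\phi=\mathbf{KL}$, the left side of the entropic-stability definition at time $t$ equals $\frac{1}{1-t}$ times the quantity
$$\sum_i \pi(i)\log\frac{\pi(i)}{\nu(i)}-(\pi(i)-\nu(i)).$$
The goal is to bound this quantity by $L\cdot D_{KL}(\pi\,\|\,\nu)$. The argument is not a direct bound on $\pi$. Instead I would follow the mean vector along the exponential family $\nu_\lambda\propto\nu\,e^{\langle\lambda,x\rangle}$ connecting $\nu$ to the I-projection of $\pi$ with the same marginals, because that projection lowers $D_{KL}$ without changing the left side. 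Differentiating in $\lambda$ brings in $\operatorname{Cov}(\nu_\lambda)=\operatorname{diag}(\mathbf m)\operatorname{Cor}(\nu_\lambda)$. The tilts with $\lambda\le 0$ are again of the form $(1-t')*\mu^{S}$ with $t'\ge t$, and the upward limits are pinnings, so the closure of the hypotheses under tilting gives $\operatorname{Cor}\preceq\eta_{\operatorname{op}}$ along the path. The marginal bound $\nu(i)\ge 1/K_{\operatorname{low}}$ controls the factor $\pi(i)/\nu(i)$ that appears when comparing the Bernoulli KL on the left with the quadratic form coming from the covariance. Writing $\eta_{\operatorname{op}}=1+(\eta_{\operatorname{op}}-1)$, the diagonal part contributes exactly $1$ and the off-diagonal excess contributes at most $(K_{\operatorname{low}}+1)(\eta_{\operatorname{op}}-1)$. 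Together these give the rate $L$.

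\textbf{Step 2: integrating the rate.} With entropic stability of rate $L$ at every $t\in[0,\theta]$, the differential inequality is $-g'(t)\le \frac{L}{1-t}\,g(t)$ in the appropriate form. Integrating it, as in Lemma \ref{lemma3760}, gives conservation of the part of the entropy dissipated before $\theta$ with factor $(1-\theta)^{-L}$. To obtain the sharper formula for $R$, I would split
$$\operatorname{Ent}[F]=\bigl(\operatorname{Ent}[F]-g(\theta)\bigr)+g(\theta).$$
The first term is the dissipation over $[0,\theta]$. Using the entropic stability $\operatorname{ES}(\nu_\theta^S)$ of the terminal posteriors, it is bounded by $\frac{\max_\tau \operatorname{ES}((1-\theta)*\mu^\tau)}{L(1-\theta)^L}\,g(\theta)$, which yields $R=1+\frac{\max_\tau\operatorname{ES}((1-\theta)*\mu^\tau)}{L(1-\theta)^L}$.

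\textbf{Step 3: the bound on $\operatorname{ES}(\mu)$.} Apply the same Grönwall computation to the quantity defining entropic stability at $t=0$, viewed as the total first-order dissipation. The weight $\frac{L(1-t)^{L-1}}{1-(1-t)^L}$ integrates to the normalization that produces $\operatorname{ES}(\mu)\le \frac{L}{1-(1-\theta)^L}$. Equivalently, $\mu$ is a mixture over $Y_\theta$ whose conservation factor is controlled by Step 2, and entropic stability is inherited with that loss.

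\textbf{Main obstacle.} I expect Step 1 to be the hard part: the exact comparison of the Bernoulli relative-entropy sum with $D_{KL}$ using only an operator bound on $\operatorname{Cor}$ plus marginal lower bounds, with the precise constant $(K_{\operatorname{low}}+1)(\eta_{\operatorname{op}}-1)+1$. My first attempt would be the exponential-family path argument above, reducing to a one-dimensional inequality per coordinate. That inequality has the form $a\log\frac{a}{b}-(a-b)\le (1+(K_{\operatorname{low}}+1)\epsilon)\cdot(\text{quadratic})$ for $b\ge 1/K_{\operatorname{low}}$, where $\epsilon=\eta_{\operatorname{op}}-1$ is the off-diagonal excess.
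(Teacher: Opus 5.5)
\textbf{There is a genuine gap, and it is in Step 1, on which everything else rests.}

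Your path $s\mapsto\nu_{s\lambda}\propto\nu\,e^{s\langle\lambda,x\rangle}$ from $\nu=(1-t)*\mu^S$ to the I-projection of $\pi$ uses a general $\lambda\in\mathbb{R}^m$. That vector is non-constant and typically of mixed sign. The hypotheses only cover the family $(1-t')*\mu^{S'}$ with $t'\in[0,\theta]$. The operation $(1-t')*$ multiplies every coordinate by the same factor, so it is the exponential tilt in the single direction $\lambda\in\mathbb{R}_{\le 0}\mathbf{1}$, and only down to total factor $1-\theta$. A non-constant $\lambda\le 0$ is not of this form, and a finite positive tilt is not a pinning; only the $+\infty$ limits are.

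So $\operatorname{Cor}\preceq\eta_{\operatorname{op}}$ is not available along your path, and the comparison of the Bernoulli-KL sum with $D_{KL}$ has no support. The split ``diagonal gives $1$, off-diagonal gives $(K_{\operatorname{low}}+1)(\eta_{\operatorname{op}}-1)$'' is read off from the answer rather than derived. The target of Step 1 is also a warning sign. At $t=0$, $S=\emptyset$ it would give $\operatorname{ES}(\mu)\le L$ for every $\theta$. That is strictly better than the stated $\frac{L}{1-(1-\theta)^L}$, which degenerates like $1/\theta$ as $\theta\to0$.

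Steps 2 and 3 have a matching problem. Write $g(t)=\mathbb{E}[\operatorname{Ent}[F\mid Y_t]]$ and $D(t):=-(1-t)g'(t)$ for the averaged Bernoulli-KL dissipation. With $\mathbb{E}_\mu F=1$, $D(0)$ is exactly the left side of the ES inequality for $\mu$. A rate bound $D\le Lg$ gives only the Grönwall estimate $g(0)\le(1-\theta)^{-L}g(\theta)$. It does not let you bound $\int_0^\theta D(t)\frac{dt}{1-t}$ by $D(\theta)$, since $D$ may be large early and small at time $\theta$. So neither the formula for $R$ nor the bound on $\operatorname{ES}(\mu)$ follows from what you propose.

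\textbf{What is missing} is a comparison of the dissipation with itself at different times, rather than with the entropy:
\[
D(s)\ \ge\ \Bigl(\frac{1-s}{1-t}\Bigr)^{L}D(t),\qquad 0\le t\le s\le\theta .
\]
Equivalently, you need a local inequality $\frac{d}{ds}D(s)\ge-\frac{L}{1-s}D(s)$ for the Bernoulli-KL sum of the current posterior $(1-s)*\mu^{S'}$ and its $F$-tilt. Differentiating along the flow means pinning one more coordinate or tilting uniformly a little further. Both moves stay inside the hypothesis class, and they produce pairwise terms. That is where the bound on $\lambda_{\max}(\operatorname{Cor}_\nu)$ and the marginal lower bound must be used.

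Both conclusions are then pure integration, and they reproduce the stated constants exactly. Taking $t=0$ gives
\[
g(0)\ \ge\ \int_0^\theta D(s)\frac{ds}{1-s}\ \ge\ D(0)\,\frac{1-(1-\theta)^L}{L},
\]
which is $\operatorname{ES}(\mu)\le\frac{L}{1-(1-\theta)^L}$. Taking $s=\theta$ gives
\[
g(0)-g(\theta)=\int_0^\theta D(t)\frac{dt}{1-t}\le D(\theta)\int_0^\theta\frac{(1-t)^{L-1}}{(1-\theta)^L}\,dt\le\frac{D(\theta)}{L(1-\theta)^L}.
\]
Combining this with $D(\theta)\le\max_\tau\operatorname{ES}((1-\theta)*\mu^\tau)\,g(\theta)$ yields the stated $R$. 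So the work you need is this local second-order estimate on $D$. A pointwise bound $\operatorname{ES}\le L$ at each time is not needed, and your route to it does not work.
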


We apply this lemma to the vertex field denoising process $Y_t'$ where the vertices are the subsets $A_{uv}:=\{\sigma_u=\sigma_v=1\}$ for each $(u,v)\in E$. Then by Lemma \ref{lemma2.16}, this translates to approximate conservation of entropy of the edge-field denoising process $Y_t$.

It is easy to check that the marginals are lower bounded when external fields are bounded:

\begin{lemma}
Under the bounded field assumption \eqref{boundexternalfield}, for any edge $(u,v)\in E$, the following holds for any $t\in[0,\theta_*]$, any $S\subset\{A_{(x,y)},(x,y)\in E\}$ and $\tau=\mathbf{1}_S$: take $\nu:=(1-t)\otimes \mu^\tau$ (or equivalently on the event space the marginal is $(1-t)*\pi_{\mathcal{A}}^\tau$ ), then
    \begin{equation}\label{}
\nu(A_{uv})\geq \left(\frac{e^{-\beta\Delta-M}}{e^{-\beta\Delta-M}+e^{\beta\Delta+M}}\right)^2:=\frac{1}{C_{\Delta,M,\beta}},
    \end{equation}  
so that we define \begin{equation}\label{cemmdelta}C_{\Delta,M,\beta}:=\left(\frac{e^{-\beta\Delta-M}+e^{\beta\Delta+M}}{e^{-\beta\Delta-M}}\right)^2.\end{equation}
\end{lemma}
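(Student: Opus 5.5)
The plan is to reduce the two-point marginal to a product of two single-site conditional probabilities, and to bound each one uniformly by looking at the local log-odds. If $A_{uv}\in S$, then $u$ and $v$ are both pinned to $1$, so $\nu(A_{uv})=1$ and there is nothing to prove. Otherwise I write
$$
\nu(A_{uv})=\nu(\sigma_u=1)\cdot\nu(\sigma_v=1\mid\sigma_u=1).
$$
By the tower property, each factor is an average, over the configuration of the remaining spins, of a single-site conditional probability of the form $\nu(\sigma_x=1\mid\sigma_{V\setminus\{x\}})$ with $x\in\{u,v\}$. If $x$ itself is pinned by $\tau$, that conditional probability equals $1$. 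It therefore suffices to prove that, for every unpinned $x$ and every configuration of its neighbours,
$$
\nu(\sigma_x=1\mid \sigma_{V\setminus\{x\}})\geq \frac{e^{-\beta\Delta-M}}{e^{-\beta\Delta-M}+e^{\beta\Delta+M}}.
$$
Multiplying the two factors then gives the square claimed in the lemma.

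To get this single-site bound I work in the $\{0,1\}$ coordinates of Fact \ref{fact2690}.

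1. Before tilting, the Hamiltonian is $4\beta\sum_{(x,y)\in E}\sigma_x\sigma_y+\sum_x(2h_x-2\beta d_x)\sigma_x$.
2. The edge tilt $(1-t)^{m(\sigma)}$ changes every edge coefficient to $4\beta':=4\beta+\ln(1-t)$. This includes edges with a pinned endpoint, where the tilt just contributes a constant or a linear term. The linear coefficients are left untouched.
3. Since $t\leq\theta_*=1-e^{-4\beta}$, we have $0\leq 4\beta'\leq 4\beta$. This is exactly where the restriction to $t\leq \theta_*$ is used.
4. Pinning to $\mathbf{1}_S$ only fixes some neighbour values at $1$, so it does not change the form of the local conditional law at an unpinned site.
5. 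The conditional log-odds of $\sigma_x=1$ are therefore
$$
2h_x-2\beta d_x+4\beta'\sum_{y\sim x}\sigma_y .
$$
6. Because $0\leq\sum_{y\sim x}\sigma_y\leq d_x\leq\Delta$ and $0\leq 4\beta'\leq 4\beta$, these log-odds lie in $[2h_x-2\beta d_x,\,2h_x+2\beta d_x]$. Using $|h_x|\leq M$, their absolute value is at most $2(M+\beta\Delta)$.
7. Finally, $\frac{e^{\ell}}{1+e^{\ell}}\geq\frac{e^{-(M+\beta\Delta)}}{e^{-(M+\beta\Delta)}+e^{M+\beta\Delta}}$ whenever $\ell\geq-2(M+\beta\Delta)$, which is the required bound.

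The only delicate step is the bookkeeping in steps 2–6. One has to make sure that the tilt acts only on the quadratic coefficient, so that the $-2\beta d_x$ shift in the linear term is left unchanged. One also has to use that the tilted coupling stays nonnegative and at most $4\beta$, since that is what keeps the neighbour contribution inside $[0,4\beta d_x]$ and makes the window symmetric around $2h_x$. Finally, the equivalence with $(1-t)*\pi_{\mathcal A}^\tau$ on the event space is immediate, because $A_{uv}$ is determined by $\sigma$ (as in the proof of Proposition \ref{proposition511}).
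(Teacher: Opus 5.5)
Your proposal is correct and follows the same route as the paper. Both arguments bound the single-site conditional probability $\nu(\sigma_x=1\mid\cdot)$ from below uniformly, using that the log-odds $2h_x-2\beta d_x+4\beta'\sum_{y\sim x}\sigma_y$ are at least $-2M-2\beta\Delta$ because $0\le 4\beta'\le 4\beta$ when $t\le\theta_*$, and then apply this bound twice via the chain rule; your handling of pinned sites and of edges with a pinned endpoint is simply more explicit than the paper's.
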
Here the marginal of the event-coordinate measure coincides with the probability of the events $A_{uv}$ under the tilted measure $(1-t)\otimes\mu^\tau$.
\begin{proof} Since $t\in[0,\theta_*]$, the quadratic coefficients in $\nu$ in the coordinate $\{0,1\}^V$ are in $[0,4\beta]$, and in this coordinate the external fields take their value in $[-2M-2\beta\Delta,2M]$ at each vertex by Fact \ref{fact2690}. Then we compute that for any configuration $\tau'$ on $V\setminus\{u\}$ consistent with $\tau$, $\nu(\sigma_u=1\mid \tau')\geq \frac{e^{-2\beta\Delta-2M}}{e^{-2\beta\Delta-2M}+1}=\frac{e^{-\beta\Delta-M}}{e^{-\beta\Delta-M}+e^{\beta\Delta+M}}$. It suffices to apply the conditioning twice. 
\end{proof}

From this, the proof of Theorem \ref{mlsitheorembound} is now immediate:

\begin{proof}[\proofname\ of Theorem \ref{mlsitheorembound}]
 Assume that the operator norm estimate in Proposition \ref{proposition4.3} holds, which has probability $1-O_{\beta,\Delta,p_0}(\frac{1}{|V|})$. We then apply Lemma \ref{lemma4.20} on this event, which shows that the edge field denoising process $Y_t'$ satisfies $R$-entropic conservation of entropy with 
    $$
R\leq 1+2(\exp(4\beta))^{(C_{\Delta,M,\beta}+1)(\frac{4\Delta\ln|V|}{\alpha_*})+1}\leq 3(\exp(4\beta))^{(C_{\Delta,M,\beta}+1)(\frac{4\Delta\ln|V|}{\alpha_*})+1}
    $$
    up to time $\theta_*$. Then by Lemma \ref{lemma2.16}, this yields $R$-conservation of entropy for the denoising process $Y_t$ up to time $\theta_*$. Then since $\operatorname{Law}(Y_1\mid Y_{\theta_*})$ is a product measure, we apply Lemma \ref{lemma3500} to deduce that $\mu_G$  satisfies approximate tensorization of entropy with constant $R$. This implies the claimed MLSI lower bound $\rho_{LS}(P)\geq\frac{1}{R|V|}$ by Fact \ref{fact4160}, which implies the claimed mixing time estimate by Fact \ref{firststandardfact}.
\end{proof}

\section{Fast relaxation under WSM for RFIM on general graphs}

In this section we focus on RFIM satisfying $\operatorname{WSM}(C)$ on a graph $G$ of $\alpha$-stretched-exponential growth. We will first take a similar step as in \cite{el2026fast} where we use a stochastic localization (by Brownian SDEs) to virtually increase the strength of the external field while keeping the quadratic interaction unchanged. This step is achieved via the SDE-driven localization scheme of Chen and Eldan \cite{chen2022localization}, and is fundamentally different from the edge-tilted localization scheme in Section \ref{sections344}.

\subsection{Localization scheme by Brownian SDEs}

Let $\nu_0$ be a probability distribution on the hypercube $\mathcal{C}_{|V|}:=\{-1,+1\}^V$. (In our application, $\nu_0=\mu_G$ is the RFIM measure on $G$.) Let $(B_t)_{t\geq 0}$ be a standard Brownian motion on $\mathbb{R}^{|V|}$ with $B_0=0$. Then $\operatorname{SL}$ is the stochastic localization process $(\nu_t(\sigma))_{t\geq 0}$ such that its density with respect to $\nu_0$
$$
\frac{d\nu_t}{d\nu_0}(\sigma)=F_t(\sigma)
$$
solves the following SDE
$$
\begin{cases}
    dF_t(\sigma)=F_t(\sigma)\langle \sigma-a_t,dB_t\rangle,\quad\forall\sigma\in\mathcal{C}_{|V|},\\F_0(\sigma)=1,
\end{cases}
$$and here $a_t$ is the mean of the sample from $\nu_t$:
\begin{equation}
    a_t=\int_{\mathcal{C}_{|V|}}\sigma \nu_t(d\sigma).
\end{equation}Then almost surely for all $t>0,$ $\nu_t$ is a probability measure and for any $A\subset\mathcal{C}_{|V|}$, the function $t\mapsto \nu_t(A)$ is a martingale.

A benefit of our specific choice of $\operatorname{SL}$ path is that the measure $\nu_t$ admits the following rewriting 
$$
\nu_t(\sigma)\propto e^{\langle y_t,\sigma\rangle}\nu_0(\sigma),
$$
and such that $y_t$ solves the following $\operatorname{SDE}$
$$
dy_t=a(y_t)dt+dB_t,\quad y_0=0,
$$
with $a(y_t)=a_t$.

Indeed, the process $y_t$ in the $\operatorname{SL}$ process admits the following Bayesian interpretation:

\begin{lemma}\label{lemma856}(\cite{el2026fast}, Proposition 2.1) Let $(\bar{B}_t)_{t\geq 0}$ be a standard Brownian motion and let $\sigma^*\sim \nu_0$ independently of $(\bar{B}_t)_{t\geq 0}$. Then the process $(y_t)_{t\geq 0}$ has the same law as $(\bar{y}_t)_{t\geq 0}$ which is defined via 
\begin{equation}
    \bar{y}_t=t\sigma^*+\bar{B}_t,\quad t\geq 0.
\end{equation}The $\operatorname{SL}$ path $(y_t)_{t\geq 0}$ is distributed as a process of conditional measures: $(\nu_0(\sigma^*\in\cdot\mid \bar{y}_t))_{t\geq 0}$.
    Therefore, $\nu_t$ is an RFIM measure obtained from $\nu_0=\mu_G$ by adding $\bar{y}_t$ to the existing external field $h$. This virtually increases the strength of the external field of the RFIM. 
\end{lemma}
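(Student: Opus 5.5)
The plan is to prove that the law of $(y_t)_{t\ge 0}$ coincides with the law of the Bayesian observation process $\bar y_t=t\sigma^*+\bar B_t$, and that the posterior of $\sigma^*$ given $(\bar y_s)_{s\le t}$ is exactly $\nu_t$. I will work in the reverse direction: start from $\bar y_t$, compute the posterior, and identify the SDE it satisfies, then invoke uniqueness of strong solutions.

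\textbf{Step 1 (posterior).} Condition on $\sigma^*=\sigma$. Then $\bar y$ is a Brownian motion with drift $\sigma$. By Girsanov, its law on $C([0,t])$ has density $\exp(\langle\sigma,\bar y_t\rangle-\tfrac t2|\sigma|^2)$ with respect to Wiener measure. Since $|\sigma|^2=|V|$ is constant on the hypercube, Bayes' rule gives
\[
\mathbb P(\sigma^*=\sigma\mid(\bar y_s)_{s\le t})\propto \nu_0(\sigma)e^{\langle\bar y_t,\sigma\rangle}.
\]
Hence the posterior depends only on $\bar y_t$ and has the claimed exponential-tilt form. Because $\nu_0=\mu_G$, tilting by $e^{\langle \bar y_t,\sigma\rangle}$ is the same as replacing $h$ by $h+\bar y_t$, which gives the final RFIM statement.

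\textbf{Step 2 (semimartingale decomposition).} By the innovation theorem from filtering theory, $\hat B_t:=\bar y_t-\int_0^t \mathbb E[\sigma^*\mid\mathcal F^{\bar y}_s]\,ds$ is a Brownian motion in the observation filtration $\mathcal F^{\bar y}$. The conditional mean is $a(\bar y_s)$, the mean of the tilted measure. So $\bar y$ solves $d\bar y_t=a(\bar y_t)dt+d\hat B_t$ with $\bar y_0=0$.

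\textbf{Step 3 (uniqueness).} The map $y\mapsto a(y)$ is smooth and globally Lipschitz on $\mathbb R^{|V|}$, since its Jacobian is the covariance of a measure on the hypercube, which is bounded by $|V|$. Therefore the SDE $dy=a(y)dt+dB$ has pathwise uniqueness, and hence uniqueness in law. It follows that $y\overset{d}{=}\bar y$ as processes.

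\textbf{Step 4 (consistency with the $F_t$ SDE).} Set $F_t(\sigma)=e^{\langle y_t,\sigma\rangle}/Z(y_t)$, where $Z$ is the normalizer under $\nu_0$. Itô's formula, with $d\log Z=\langle a_t,dy_t\rangle+\tfrac12(\text{Hessian trace})dt$, gives $dF_t=F_t\langle\sigma-a_t,dB_t\rangle$. The $dt$ terms cancel precisely because $dy=a\,dt+dB$. This shows that the tilt representation is the solution of the defining SDE.

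The main obstacle is justifying the innovation process in Step 2. A clean alternative avoids filtering theory: compute directly, via Girsanov applied to the mixture, that the law of $\bar y$ has density $Z(\bar y_t)e^{-t|V|/2}$ with respect to Wiener measure. Then $\log$ of this density is $\int\langle a,d\bar y\rangle-\tfrac12\int|a|^2$, and Girsanov identifies the drift $a(\bar y_t)$ directly.
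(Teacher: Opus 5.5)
The paper gives no proof of its own here and simply imports the result from \cite{el2026fast}, Proposition 2.1; your argument is correct and is the standard filtering proof behind that citation. It consists of Bayes/Cameron--Martin for the posterior $\propto\nu_0(\sigma)e^{\langle\bar y_t,\sigma\rangle}$, the innovation theorem giving $d\bar y_t=a(\bar y_t)\,dt+d\hat B_t$, and uniqueness in law from the global Lipschitz bound $\|\nabla a\|_{op}\le |V|$. Your Girsanov-on-the-mixture alternative is also valid since $a$ is bounded. Only minor points are left implicit: the $dt$-cancellation in Step 4 also uses $|\sigma|^2=|V|$ (i.e.\ $\operatorname{Tr}\operatorname{Cov}(\nu_t)=|V|-|a_t|^2$), and identifying the tilt with the SL process needs uniqueness for the locally Lipschitz $F_t$-system.
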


\subsection{Weak Poincaré inequality}

We prove the following moment estimates for the correlation functions of $\nu_t$:

\begin{theorem}\label{theorem83777}
    Suppose that the graph $G=(V,E)$ has $\alpha$-stretched exponential growth for some $\alpha\in(0,1)$. Suppose that the RFIM measure $\mu_G$ on $G$ satisfies $\operatorname{WSM}(C)$ for some $C>0$. Then we can find $C_0=C_0(\alpha,C_\alpha,C)>0$ such that for any $p\geq 1$,
$$
\mathbb{E}_h\left[\sup_{t\geq 0}\mathbb{E}[\operatorname{Tr}(\operatorname{Cov}(\nu_t)^p)\mid h]\right]\leq (C_0p)^p|V|
.$$
    In particular, for any $R\geq 0$, by Markov's inequality, 
    $$
\mathbb{E}_h\left[\sup_{t\geq 0}\mathbb{P}\left(\|\operatorname{Cov}(\nu_t)\|_{op}\geq R\mid h\right)\right]\leq |V|e^{-c_0R},
    $$with $c_0=(eC_0)^{-1}$.
\end{theorem}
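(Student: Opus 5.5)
The plan is to reduce the trace moment to spatial decay of pairwise covariances under the tilted measures $\nu_t$, and to feed in $\operatorname{WSM}(C)$ through the Bayesian representation of Lemma \ref{lemma856}. First I use $\operatorname{Tr}(\operatorname{Cov}(\nu_t)^p)\le\sum_{v\in V}S_v(t)^{p}$, where $S_v(t):=\sum_{w\in V}|\operatorname{Cov}_{\nu_t}(\sigma_v,\sigma_w)|$ is the row sum. This holds because $\operatorname{Cov}(\nu_t)$ is positive semidefinite, so $\operatorname{Tr}(\operatorname{Cov}^p)\le\sum_v (\operatorname{Cov}^p)_{vv}$ is controlled through the row-sum norm. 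The claim then reduces to the one-vertex bound
\[
\mathbb{E}_h\Big[\sup_{t\ge0}\mathbb{E}[S_v(t)^p\mid h]\Big]\le (C_0p)^p .
\]
The Markov-inequality consequence follows by choosing $p\asymp R/(eC_0)$.

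Second, I bound the covariances by boundary influences. By Lemma \ref{lemma856}, $\nu_t$ is a ferromagnetic RFIM with field $h+y_t$. By FKG and the monotone coupling of Section \ref{howtousecouplinghere}, for $r<d(v,w)$ we have
\[
|\operatorname{Cov}_{\nu_t}(\sigma_v,\sigma_w)|\le 2D_r^{(t)}(v),
\]
where $D_r^{(t)}(v)$ is the total-variation distance between the laws of $\sigma_v$ under $\nu_t$ with plus and with minus boundary condition on $\partial B_r(v)$. Moreover $D_r^{(t)}(v)\le 1$ and it is nonincreasing in $r$. Hence
\[
S_v(t)\le 1+2\sum_{r\ge0}|\partial B_{r+1}(v)|\,D^{(t)}_r(v),\qquad |\partial B_{r+1}(v)|\le e^{C_\alpha (r+1)^\alpha}.
\]

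Third, and this is the step I expect to be the main obstacle, I transfer $\operatorname{WSM}(C)$ from $\mu_G$ to $\nu_t$ uniformly in $t$. I need a bound of the form
\[
\mathbb{E}_h\Big[\sup_t\mathbb{E}[D_r^{(t)}(v)\mid h]\Big]\le C'e^{-r/C'} .
\]
My first attempt uses the Bayesian representation $\bar y_t=t\sigma^*+\bar B_t$. The quantity $D_r^{(t)}(v)$ depends only on the field restricted to $B_r(v)$. By monotonicity in the boundary, it is dominated by a martingale-type functional of the posterior $\nu_t$, so its conditional expectation given $h$ should be monotone in $t$. This would reduce the supremum to an integrated statement. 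I then use the symmetry of the law of $h_u$ to absorb the sign-dependent drift $t\sigma^*_u$ into the disorder, by flipping the field on the plus/minus clusters of $\sigma^*$. This should show that the averaged influence for the field $h+\bar y_t$ is no larger than a constant multiple of the averaged influence for $h$, which $\operatorname{WSM}(C)$ controls. This is the point where I expect to need the most care, since $\operatorname{WSM}(C)$ is an averaged statement and is not preserved pathwise.

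Fourth, I convert the averaged decay into moments of $S_v$ using the monotonicity of $r\mapsto D_r$. Write $r_*$ for the last radius with $D_r\ge e^{-r/(2C')}$. Then $S_v\lesssim |B_{r_*}(v)|+\sum_r e^{C_\alpha r^\alpha-r/(2C')}$. The second sum is a constant because $\alpha<1$. The event $\{r_*\ge r\}$ has probability at most $e^{-r/(2C')}$ up to a union bound, which is summable against $e^{C_\alpha r^\alpha}$. Getting the sharp $(C_0p)^p$ rather than a weaker stretched-exponential moment bound will require localizing the Hölder step: I use the bound $D_r^p\le D_r$ and place the $p$-th power on the single dominant scale, not on all scales at once. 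If this falls short, I would settle for moments of the order $\exp(Cp^{1/(1-\alpha)})$, which still suffice for the polynomial-time algorithm of Theorem \ref{theorem231230}.
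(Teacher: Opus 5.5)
Your Steps 1--2 are valid inequalities, up to harmless indexing. For instance, $0\preceq\operatorname{Cov}(\nu_t)\preceq\operatorname{diag}(S_v(t))$ by diagonal dominance, which gives $\operatorname{Tr}(\operatorname{Cov}(\nu_t)^p)\le\sum_vS_v(t)^p$. The trouble is that together they charge all $p$ factors to the influence profile $r\mapsto D_r^{(t)}(v)$ of a \emph{single} vertex. From there the bound $(C_0p)^p$ is out of reach, so Step 4 cannot be completed.

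All you know about that profile is $0\le D_r\le1$, monotonicity in $r$, and $\mathbb{E}D_r\le C'e^{-r/C'}$. The profile $D_r(v)=\mathbf{1}\{r\le r_*\}$ with $\mathbb{P}(r_*\ge r)\asymp e^{-r/C'}$ satisfies all of this. On a graph with $|B_r(v)|\asymp e^{C_\alpha r^\alpha}$ it gives $\mathbb{E}\bigl[(\sum_r|\partial B_{r+1}(v)|D_r(v))^p\bigr]\gtrsim\sup_re^{pC_\alpha r^\alpha-r/C'}=\exp(\Theta(p^{1/(1-\alpha)}))$, and already $(Cp)^{dp}$ under polynomial growth. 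No rearrangement of the H\"older step fixes this, so neither the moment bound nor the exponential tail $|V|e^{-c_0R}$ follows.

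Your fallback does not rescue the application either. Such moments only give $\mathbb{P}(\|\operatorname{Cov}(\nu_t)\|_{op}\ge R)\lesssim|V|\exp(-c(\log R)^{1/\alpha})$. Writing $u_t=\log\bigl(\operatorname{osc}(\varphi)^2/\mathbb{E}[\operatorname{Var}_{\nu_t}(\varphi)\mid h]\bigr)$, the ODE argument of Theorem~\ref{theorem5.345} then only yields $\dot u_t\lesssim\exp\bigl(c'(u_t+\log(|V|/\delta))^{\alpha}\bigr)$. Its comparison solution blows up after a time tending to $0$ as $|V|\to\infty$, so nothing transfers to a fixed large $T$.

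The missing idea is that $\langle\sigma_v;\sigma_w\rangle_t$ is also controlled by the influence at the \emph{other} endpoint, and that influences at well-separated centres decorrelate. This lets you extract one decay factor per index rather than one in total. The paper does this as follows:
\begin{enumerate}
\item It keeps the closed-walk expansion $\operatorname{Tr}(A_t^p)\le\sum_{u_1,\dots,u_p}\prod_i\delta_t(u_i,\ell_i)$ with $\ell_i\le d(u_i,u_{i-1})$.
\item It retains only the dyadic class $Q_{k_*}$ of nearest-predecessor distances $r_i$ that maximizes $|Q_k|2^k$. On this class the separation condition \eqref{separationcondition} holds and $\ell_i\ge r_i/2$.
\item It factorizes $\mathbb{E}[\prod_{i\in Q_{k_*}}\delta_t(u_i,\ell_i)\mid h]$ via Proposition~\ref{propositiononline869}, whose proof uses FKG to decorrelate $\sigma^*\sim\nu_0$ across the separated balls.
\item Independence of $h$ on disjoint balls and $\operatorname{WSM}(C)$ then give a factor $e^{-\ell_i/C}$ for each index.
\item Counting the at most $p!$ predecessor maps $J_i$, together with $\sum_{i\in Q_{k_*}}r_i\gtrsim\sum_ir_i^{(1+\alpha)/2}$, gives $C_0^pp!\,|V|$.
\end{enumerate}

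As for Step 3, the only input needed for the supremum over $t$ is the supermartingale bound $\mathbb{E}[\delta_t(u,\ell)\mid h]\le\delta_0(u,\ell)$. By It\^o, the drift of the mean of $\sigma_u$ under $\nu_t(\cdot\mid\sigma_{\partial B_\ell(u)}=\pm)$ is $\sum_w\langle\sigma_u;\sigma_w\rangle^{\pm}_t(a_{t,w}-\langle\sigma_w\rangle^{\pm}_t)$. FKG makes this $\le0$ for $+$ and $\ge0$ for $-$.

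Your sign-flipping of $h$ is unnecessary, and it is also unavailable, since this theorem assumes no symmetry of $h$. It would not work anyway: $\sigma^*$ is drawn from $\mu_G$ given $h$, and the gauge flip $\sigma\mapsto\sigma^*\sigma$ that absorbs $t\sigma^*$ makes the couplings across cluster boundaries of $\sigma^*$ antiferromagnetic.
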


Theorem \ref{theorem83777} is proven at the end of Section \ref{section666}.

Then we prove a weak version of the variance concentration result:

\begin{theorem}\label{theorem5.345}
    Suppose the graph $G$ has $\alpha$-stretched-exponential growth and the RFIM $\mu_G$ satisfies $\operatorname{WSM}(C)$ for some $C>0$. Then we can find some $c_0=c_0(\alpha,C_\alpha,C)>0$ such that for any $\delta>0$, with $\mathbb{P}_h$-probability at least $1-\delta$ the following estimate holds. For any $T>0$ and test function $\varphi:\{-1,+1\}^V\to\mathbb{R}$, there holds
\begin{equation}\label{relationattheorem5.3}
    \operatorname{Var}_{\nu_0}(\varphi)\leq (e^{-c_0}|V|/\delta)^{1/q}\mathbb{E}[\operatorname{Var}_{\nu_T}(
    \varphi)\mid h]^{\frac{1}{p}}\operatorname{osc}(\varphi)^{\frac{2}{q}},
\end{equation}
    where $p=e^{2T/c_0}$ and $q$ satisfies $1/p+1/q=1$. The expectation is taken with respect to the $\operatorname{SL}$ process $(\nu_t)_{t\geq 0}$ and we denote by $\operatorname{osc}(
    \varphi):=\sup_{x\in\{-1,+1\}^V}\varphi(x)-\inf_{x\in\{-1,+1\}^V}\varphi(x)$.
\end{theorem}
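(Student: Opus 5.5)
We follow the variance-decomposition strategy of \cite{el2026fast}, using Theorem \ref{theorem83777} as the only model-dependent input. Along the $\operatorname{SL}$ path, the process $M_t:=\int\varphi\,d\nu_t$ is a martingale with $dM_t=\langle \operatorname{Cov}_{\nu_t}(\sigma,\varphi),dB_t\rangle$. Hence
$$
\operatorname{Var}_{\nu_0}(\varphi)=\mathbb{E}[\operatorname{Var}_{\nu_T}(\varphi)\mid h]+\int_0^T\mathbb{E}\bigl[\|\operatorname{Cov}_{\nu_t}(\sigma,\varphi)\|^2\mid h\bigr]dt .
$$
By Cauchy--Schwarz, $\|\operatorname{Cov}_{\nu_t}(\sigma,\varphi)\|^2\le\|\operatorname{Cov}(\nu_t)\|_{op}\operatorname{Var}_{\nu_t}(\varphi)$. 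Itô's formula then gives
$$
\frac{d}{dt}\mathbb{E}[\operatorname{Var}_{\nu_t}(\varphi)]=-\mathbb{E}\bigl[\|\operatorname{Cov}_{\nu_t}(\sigma,\varphi)\|^2\bigr]\ge-\mathbb{E}\bigl[\|\operatorname{Cov}(\nu_t)\|_{op}\operatorname{Var}_{\nu_t}(\varphi)\bigr].
$$
With a deterministic bound $\|\operatorname{Cov}(\nu_t)\|_{op}\le R$, Grönwall would give $\operatorname{Var}_{\nu_0}\le e^{RT}\mathbb{E}\operatorname{Var}_{\nu_T}$. We only have tail control, so we will run a pathwise version of this argument and finish with Hölder.

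\textbf{Pathwise step.} Set $V_t:=\operatorname{Var}_{\nu_t}(\varphi)$ and $K_t:=\|\operatorname{Cov}(\nu_t)\|_{op}$. Itô gives $dV_t=-\|\operatorname{Cov}_{\nu_t}(\sigma,\varphi)\|^2dt+dN_t$, where $dN_t$ is a martingale term (the drift of the second moment minus $d\langle M\rangle_t$). Consider $\log V_t$, or rather $V_t^{1/p}$ for the chosen $p$. Its drift is bounded below by $-\tfrac1p K_t\,dt$ minus an Itô correction of order $(1-\tfrac1p)\,d\langle N\rangle_t/V_t^2$. The martingale term $dN_t=\langle\operatorname{Cov}_{\nu_t}(\sigma,(\varphi-M_t)^2),dB_t\rangle$ can also be controlled by $K_t$: its quadratic variation is at most $K_t\operatorname{Var}_{\nu_t}((\varphi-M_t)^2)$, and the latter is at most $V_t\cdot\operatorname{osc}(\varphi)^2$. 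Rather than tracking this exactly, the cleanest route is probably to prove the weaker multiplicative bound
$$
V_0\le \mathbb{E}\Bigl[V_T\exp\Bigl(c\int_0^T K_t\,dt\Bigr)\Bigr],
$$
or its Hölder-friendly variant, by a Girsanov/Feynman--Kac representation of $V_t$ along the path. We then apply Hölder with exponents $p,q$, using $V_T\le\operatorname{osc}(\varphi)^2$ to convert part of $V_T$ into $\operatorname{osc}(\varphi)^{2/q}$. This produces the factor $\mathbb{E}[V_T]^{1/p}\operatorname{osc}^{2/q}$ times $\mathbb{E}[\exp(cq\int_0^TK_t)]^{1/q}$.

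\textbf{Controlling the exponential moment.} By Jensen in time, $\exp(cq\int_0^TK_t\,dt)\le\frac1T\int_0^T e^{cqTK_t}dt$. Theorem \ref{theorem83777} gives $\mathbb{E}_h\sup_t\mathbb{E}[e^{\lambda K_t}\mid h]\le 1+|V|\sum_{k\ge1}(\lambda C_0 e)^k/ \text{(something summable)}$, provided $\lambda<c_0$. To stay in this range, the constants $c$ and $q$ must be balanced against $T$, which is exactly what the choice $p=e^{2T/c_0}$ accomplishes. Concretely, we will replace the crude Grönwall by iterating over short time intervals of length $\asymp c_0$. On each interval the exponential moment is finite, and Hölder is applied once per interval. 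The Hölder exponents multiply up to $p=e^{2T/c_0}$, while the constants accumulate to $e^{-c_0}|V|$ raised to $1/q$. Finally, Markov's inequality in $h$ converts $\mathbb{E}_h\sup_t\mathbb{E}[\cdot\mid h]\le|V|e^{-c_0}$ into a statement holding with $\mathbb{P}_h$-probability $1-\delta$ at cost $1/\delta$. Since the bound is uniform in $t$, and $\varphi$ enters only through $V_t$ and $\operatorname{osc}$, it holds simultaneously for all $T$ and $\varphi$.

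\textbf{Main obstacle.} The hard step is the one-interval estimate: showing that $\mathbb{E}[V_{s}]\le \mathbb{E}[V_{s+\tau}]^{1/p_1}(\cdots)^{1/q_1}$, with $p_1=e^{2\tau/c_0}$, using only an exponential tail on $K_t$ rather than a pointwise bound. I would try first the approach of \cite{el2026fast}, Section 4. That approach differentiates $t\mapsto\mathbb{E}[V_t^{r(t)}]$ with a time-varying exponent $r(t)$. The exponent is chosen so that the loss $-K_tV_t$ is absorbed by the entropy-type term $r'(t)V_t^{r}\log V_t$, with the bad event $\{K_t\gtrsim\log(\cdot)\}$ paid for via Theorem \ref{theorem83777}.
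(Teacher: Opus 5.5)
Your proposal is correct in substance but takes a different route from the paper. The step you single out as the main obstacle is already closed by your own ingredients.

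\textbf{Closing the one-interval step.} Since $dV_t=-\|\operatorname{Cov}_{\nu_t}(\sigma,\varphi)\|^2\,dt+dN_t$ with a bounded martingale integrand, and $\|\operatorname{Cov}_{\nu_t}(\sigma,\varphi)\|^2\le K_tV_t$, the process $V_t\exp(\int_s^tK_u\,du)$ is a submartingale on $[s,s+\tau]$. Integrability is automatic because $K_t\le\operatorname{Tr}\operatorname{Cov}(\nu_t)\le|V|$. Hence $f(s)\le\mathbb{E}[V_{s+\tau}e^{\int_s^{s+\tau}K_u\,du}\mid h]$ with $f(t):=\mathbb{E}[V_t\mid h]$, and no Girsanov or Feynman--Kac representation is needed.

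Your H\"older step (using $V\le\operatorname{osc}(\varphi)^2$), Jensen in time, and a bound $\sup_t\mathbb{E}[e^{\lambda K_t}\mid h]\le D$ at $\lambda=q_1\tau$ then give $f(s)\le f(s+\tau)^{1/p_1}(D\operatorname{osc}(\varphi)^2)^{1/q_1}$. With $p_1=e^{2\tau/c_0}$, the quantity $q_1\tau=\tau/(1-e^{-2\tau/c_0})$ is increasing in $\tau$, decreases to $c_0/2$ as $\tau\downarrow0$, and stays below $c_0$ for $\tau\le c_0/2$. Iterating over $n$ such intervals, the exponents telescope to $1/p$ and $1-1/p=1/q$ with $p=e^{2T/c_0}$. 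The $V_t^{1/p}$ It\^o computation and the time-varying exponent $r(t)$ can be dropped. The former is a dead end anyway, since its correction term is only bounded by $K_t\operatorname{osc}(\varphi)^2/V_t$.

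\textbf{Comparison with the paper.} The paper never leaves the deterministic function $f$. It splits $\mathbb{E}[K_tV_t\mid h]$ at a level $R$ into $Rf(t)$ plus $\operatorname{osc}(\varphi)^2\,\mathbb{E}[K_t\mathbf{1}_{K_t\ge R}\mid h]$, and bounds the latter via the tail estimate \eqref{suppoppps}. It then chooses $R=c_0^{-1}\log(|V|\operatorname{osc}(\varphi)^2/(\delta f(t)))$ adaptively, so both pieces are of order $Rf(t)$. This gives $\frac{d}{dt}\log f\ge-\frac{2}{c_0}\bigl(\log\frac{|V|\operatorname{osc}(\varphi)^2}{\delta}+\frac12-\log f\bigr)$, which integrates in one step.

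Your iterated H\"older is a discretization of exactly this inequality: $1/q_1\approx2\tau/c_0$ for small $\tau$, and the factor $2$ in $p$ reflects evaluating the exponential moment at $\lambda\approx c_0/2$. The paper's version is shorter and needs only tails. Yours needs exponential moments, but these follow directly from the trace moments in Theorem \ref{theorem83777}. Moreover, a single Markov inequality for $\mathbb{E}_h\sup_t\mathbb{E}[e^{\lambda K_t}\mid h]$ at the largest $\lambda$ used gives one $h$-event valid for all $t$, $T$ and $\varphi$. That is slightly cleaner than \eqref{suppoppps}, which Markov gives for a fixed $R$ but the paper then uses with a $t$- and $\varphi$-dependent $R$.

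Finally, neither argument literally yields the prefactor $e^{-c_0}$ that you say the constants accumulate to. Both give $C|V|/\delta$ with an absolute constant $C$ ($e^{1/2}$ from the ODE), which is all that is absorbed into constants afterwards.
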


\begin{proof}
    The proof follows the same argument in \cite{el2026fast}, Theorem 3.1, and we give a brief sketch here for completeness. By Theorem \ref{theorem83777} and Markov's inequality, we have with $\mathbb{P}_h$-probability $1-\delta$ that 
    \begin{equation}\label{suppoppps}
\sup_{t\geq 0}\mathbb{P}\left(\|\operatorname{Cov}(\nu_t)\|_{op}\geq R\mid h
\right)\leq |V|\delta^{-1}e^{-c_0R}.
    \end{equation}All expectations in the following are taken with respect to the Brownian motion and conditional on $h$.

Consider the martingale $M_t:=\nu_t(\varphi)$, then 
$$
dM_t=\int_{\{-1,+1\}^V}\varphi(x)\langle x-a_t,dB_t\rangle\nu_t(dx)
,$$and thus
$$
d[M]_t=\|\int_{\{-1,+1\}^V}\varphi(x)(x-a_t)\nu_t(dx)\|^2dt\leq\operatorname{Var}_{\nu_t}(\varphi)\|\operatorname{Cov}(\nu_t)\|_{op}dt,
$$where we recall that 
$$
\operatorname{Cov}(\nu_t):=\int_{\{-1,+1\}^V}(\sigma-a_t)(\sigma-a_t)^T\nu_t(d\sigma),\quad a_t=\int_{\{-1,+1\}^V}\sigma \nu_t(d\sigma).
$$
Next, by the fact that $\nu_t(\varphi^2)$ is a martingale and Itô's formula, 
\begin{equation}
    d\operatorname{Var}_{\nu_t}(\varphi)=d\nu_t(\varphi^2)-2M_tdM_t-\frac{1}{2}2d[M]_t=-d[M]_t+\operatorname{martingale}.
\end{equation} Denote by $A_t:=\operatorname{Cov}(\nu_t)$, then 
\begin{equation}
    \frac{d}{dt}\mathbb{E}[\operatorname{Var}_{\nu_t}(\varphi)]\geq -\mathbb{E}[\operatorname{Var}_{\nu_t}(\varphi)\|A_t\|_{op}\mathbf{1}_{\|A_t\|_{op}\leq R}] -\mathbb{E}[\operatorname{Var}_{\nu_t}(\varphi)\|A_t\|_{op}\mathbf{1}_{\|A_t\|_{op}\geq R}],
\end{equation}for any given $R>0$. The first term on the right hand side is easy to control via
$$
\mathbb{E}[\operatorname{Var}_{\nu_t}(\varphi)\|A_t\|_{op}\mathbf{1}_{\|A_t\|_{op}\leq R}] \leq R\mathbb{E}[\operatorname{Var}_{\nu_t}(\varphi)],
$$and for the second term we use
$$\begin{aligned}
\mathbb{E}[\operatorname{Var}_{\nu_t}(\varphi)\|A_t\|_{op}\mathbf{1}_{\|A_t\|_{op}\geq R}]&\leq \operatorname{osc}(\varphi)^2(R\mathbb{P}(\|A_t\|_{op}\geq R)+\int_R^\infty\mathbb{P}(\|A_t\|_{op}\geq r)dr)\\&\leq\operatorname{osc}(\varphi)^2|V|\delta^{-1}(R+c_0^{-1})e^{-c_0R},
\end{aligned}$$ where we use \eqref{suppoppps} in the last line.
Then we take 
$$
R=c_0^{-1}\log\left(\frac{|V|\operatorname{osc}(\varphi)^2}{\delta\mathbb{E}[\operatorname{Var}_{\nu_t}(\varphi)]}
\right)
,$$ then we combine the two differential equations and solve an ODE to get the estimate \eqref{relationattheorem5.3}. See \cite{el2026fast}, Theorem 3.1 for details of the computation.
\end{proof}

Then we can prove the following weak Poincaré inequality for RFIM:
\begin{theorem}\label{statementtheorem906}
    Suppose $G$ has $\alpha$-stretched-exponential growth for some $\alpha\in(0,1)$ and $\operatorname{WSM}(C)$ holds for RFIM $\mu_G$ for some $C>0$. Then for any fixed $\delta\in(0,1)$, with $\mathbb{P}_h$-probability at least $1-\delta$, we have the following inequality for all functions $\varphi:\{-1,+1\}^V\to\mathbb{R}$:
    \begin{equation}
\operatorname{Var}_{\mu_G}(\varphi)\leq K\delta^{-1/q}|V|^\kappa\mathcal{E}_{\mu_G}(\varphi,\varphi)^{1/p}\cdot\operatorname{osc}(\varphi)^{2/q},        
    \end{equation}
    where the constants $K,\kappa>0$ and $p,q\geq 1,1/p+1/q=1$ all depend on $C,C_\alpha,\alpha$ and $\beta$.
\end{theorem}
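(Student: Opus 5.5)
The plan is to combine the weak variance concentration of Theorem \ref{theorem5.345} with a quenched Poincaré inequality for the terminal measure $\nu_T$. By Lemma \ref{lemma856}, $\nu_T$ is an RFIM on $G$ with the same interaction $\beta$ and external field $h+\bar y_T$, where $\bar y_T = T\sigma^*+\bar B_T$. For a large but fixed $T=T(\beta,\Delta)$, I would apply Theorem \ref{theorem12121}, in the generalized form using Assumption \ref{thiasssumptionnew}, to $\nu_T$. This gives $\operatorname{Var}_{\nu_T}(\varphi)\le |V|^{1+\kappa_1}\mathcal{E}_{\nu_T}(\varphi,\varphi)$ with high probability over the new field. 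The subtlety is that $\sigma^*$ correlates the coordinates of $\bar y_T$, so the fields are not i.i.d. The fix is that $|h_u+T\sigma^*_u+\bar B_{T,u}|\le K$ forces $\bar B_{T,u}$ into an interval of length $2K$. Since $\bar B_T$ has i.i.d. $\mathcal N(0,T)$ coordinates independent of $(h,\sigma^*)$, for every fixed realization of $(h,\sigma^*)$ the events $\{|h_u+\bar y_{T,u}|\le K\}$ are independent across $u$, each with probability at most $2K/\sqrt{2\pi T}<p_0/2$ once $T$ is large. The percolation argument of Proposition \ref{proposition672} only uses this site-wise domination, and it gives the same Bernoulli$(p_0)$ stochastic bound. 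This is the reason Assumption \ref{thiasssumptionnew} was stated with only $|h_v|$ independence in mind. The bounded degree $\Delta$ needed there follows from stretched-exponential growth, since $|B_1(v)|\le e^{C_\alpha}$.

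Next I need to pass from $\mathcal{E}_{\nu_T}$ back to $\mathcal{E}_{\mu_G}$ in expectation. Write $\nu_T=F_T\mu_G$. Because $F_T$ is a martingale with $\mathbb E F_T=1$, and the Glauber rate $\frac{\mu(\sigma)\mu(\sigma')}{\mu(\sigma)+\mu(\sigma')}$ is jointly concave (a harmonic-mean type expression), I expect $\mathbb E[\mathcal E_{\nu_T}(\varphi,\varphi)]\le \mathcal E_{\mu_G}(\varphi,\varphi)$. Concretely, $\mathbb E\big[\frac{ab}{a+b}\big]\le \frac{\mathbb E a\,\mathbb E b}{\mathbb E a+\mathbb E b}$ applied to $a=\nu_T(\sigma)$, $b=\nu_T(\sigma')$.

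On the bad event where the Poincaré inequality for $\nu_T$ fails, I bound $\operatorname{Var}_{\nu_T}(\varphi)\le\operatorname{osc}(\varphi)^2$. The bad event has probability $O(1/|V|)$ over $(\bar B_T,\sigma^*)$, uniformly in $h$. Then
$$\mathbb E[\operatorname{Var}_{\nu_T}(\varphi)\mid h]\le |V|^{1+\kappa_1}\mathcal E_{\mu_G}(\varphi,\varphi)+O(|V|^{-1})\operatorname{osc}(\varphi)^2.$$
Inserting this into \eqref{relationattheorem5.3} with $p=e^{2T/c_0}$ gives the claimed inequality, with an additional term $|V|^{-1/p}\operatorname{osc}(\varphi)^{2}$ times the prefactor.

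The main obstacle is this additive error term, which is not of the form in the statement. I would first try to make the failure probability of the Poincaré bound for $\nu_T$ as small as $|V|^{-N}$ for large $N$. This is done by taking $m=N\ln|V|/\alpha_*$ in Proposition \ref{proposition4.3}, which only worsens the polynomial exponent $\kappa_1$. I would then absorb the error via the trivial bound $\mathcal{E}_{\mu_G}\gtrsim$ (minimum weight) only if it is non-degenerate. If that fails, I would instead use $\operatorname{osc}(\varphi)^{2}\cdot|V|^{-N/p}$ and rebalance $p,q$, possibly by rerunning the ODE argument of Theorem \ref{theorem5.345} with the Dirichlet form in place of $\mathbb E\operatorname{Var}_{\nu_T}$. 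The constants $\kappa,K,p,q$ then depend on $T$, which depends only on $\beta,C,C_\alpha,\alpha$.
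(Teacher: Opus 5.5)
Your first two steps are sound. Conditioning on $(h,\sigma^*)$ makes the sites $\{|h_u+T\sigma^*_u+\bar B_{T,u}|\le K\}$ independent, each with probability at most $2K/\sqrt{2\pi T}$. That is exactly the site-wise domination the percolation argument of Proposition \ref{proposition672} needs, and it gives failure bounds uniform in $h$. This is more careful than the paper, which asserts that $|y_T+h|$ has i.i.d.\ coordinates and thereby overlooks that $\sigma^*\sim\mu_G$ depends on $h$. Joint concavity of $(a,b)\mapsto ab/(a+b)$, together with $\mathbb{E}[\nu_T(\sigma)\mid h]=\mu_G(\sigma)$, is also a valid replacement for the supermartingale property of $t\mapsto\mathcal{E}_{\nu_t}(\varphi,\varphi)$ that the paper cites.

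The gap is the additive error, and none of your proposed remedies closes it. With the failure level fixed in advance (even at $|V|^{-N}$), you end with
\[
\operatorname{Var}_{\mu_G}(\varphi)\le(\cdots)\,\mathcal{E}_{\mu_G}(\varphi,\varphi)^{1/p}\operatorname{osc}(\varphi)^{2/q}+\eta\operatorname{osc}(\varphi)^2
\]
for one fixed $\eta>0$. This says nothing about functions with $\operatorname{Var}_{\mu_G}(\varphi)\lesssim\eta\operatorname{osc}(\varphi)^2$, such as indicators of sets $S$ with $\mu_G(S)\lesssim\eta$, which Lemma \ref{warmstarttheorem} also needs. Absorbing the error would require $\mathcal{E}_{\mu_G}(\varphi,\varphi)\ge(\eta/K|V|^{\kappa})^{p}\operatorname{osc}(\varphi)^2$ for every nonconstant $\varphi$. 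That fails: for $\varphi=\mathbf{1}_{\{x\}}$ with $x$ atypical, $\mathcal{E}\le\mu_G(x)$ is exponentially small in $|V|$, which is also the size of the minimum-weight bound. Rerunning the ODE of Theorem \ref{theorem5.345} does not help either, since it carries an additive error along additively.

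The missing idea is to treat the failure level as a free parameter chosen separately for each test function. You already noticed that enlarging $m$ in Proposition \ref{proposition672} trades failure probability against the exponent. Used as a continuous family with $m\propto L$ (Theorem \ref{refinementtheorem}), it says that for every $L\ge\kappa_0\ln|V|$ the inverse gap of $\nu_T$ is at most $|V|e^{\epsilon L}$ outside an event of probability $e^{-2L}$; with your conditional fields this holds for every $h$. Hence
\[
\mathbb{E}[\operatorname{Var}_{\nu_T}(\varphi)\mid h]\le|V|e^{\epsilon L}\,\mathcal{E}_{\mu_G}(\varphi,\varphi)+e^{-2L}\operatorname{osc}(\varphi)^2\qquad\text{for all }L\ge\kappa_0\ln|V|.
\]
Choosing $L=\log\bigl(\operatorname{osc}(\varphi)^2/\mathcal{E}_{\mu_G}(\varphi,\varphi)\bigr)$ gives the multiplicative bound
\[
\mathbb{E}[\operatorname{Var}_{\nu_T}(\varphi)\mid h]\le 2|V|\,\mathcal{E}_{\mu_G}(\varphi,\varphi)^{1-\epsilon}\operatorname{osc}(\varphi)^{2\epsilon}.
\]
If this $L$ falls below $\kappa_0\ln|V|$, you directly have $\operatorname{Var}_{\mu_G}(\varphi)\le\operatorname{osc}(\varphi)^2\le|V|^{\kappa_0}\mathcal{E}_{\mu_G}(\varphi,\varphi)$. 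Only after this step do you insert into \eqref{relationattheorem5.3}, which yields the statement with $1/p'=(1-\epsilon)/p$ and $1/q'=\epsilon/p+1/q$.

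Any fixed ratio between the growth rate of the Poincaré constant and the decay rate of the failure probability would work here, so your fixed-$p_0$ use of Theorem \ref{theorem12121} suffices once $m$ is allowed to vary. Because your tail bound holds for every $h$, you also would not need the paper's Markov argument on $Y(h)=\int e^{L}q_h(L)\,dL$, whose purpose is to make a single quenched event valid for all $L$ simultaneously.
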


To prove Theorem \ref{statementtheorem906}, we need the following refinement of Theorem \ref{theorem12121}:

\begin{theorem}\label{refinementtheorem}
For any $\epsilon>0$, $\beta>0$ and $C_\alpha>0$, there exist constants $K_0>0$, $\kappa_0$ and $\epsilon_0>0$ (depending only on $\epsilon,\beta,C_\alpha$) such that the following holds. Let $G=(V,E)$ be a graph of maximal degree $\Delta\leq\exp(C_\alpha)$, and $\mu_G$ be the RFIM on $G$ with external field $h$ where $(|h_u|)_{u\in V}$ are i.i.d. satisfying $\mathbb{P}(|h_x|\leq K_0)\leq\epsilon_0$. Then for each $L\geq\kappa_0\ln|V|$, 
$$
\mathbf{gap}_G^{-1}\leq|V|\cdot\exp( {\epsilon} L)
$$with $\mathbb{P}_h$- probability at least $1-e^{-2L}$. 
\end{theorem}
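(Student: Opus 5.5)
We rerun the proof of Theorem \ref{theorem12121} with the same edge-field denoising process stopped at $\theta_*=1-e^{-4\beta}$. The only change is that the truncation level $m$ in the union bound is now tied to $L$ rather than to $\ln|V|$. By Fact \ref{fact3900}, Lemma \ref{lemma3500}, Lemma \ref{lemma3760} and Corollary \ref{followingcriterion}, it suffices to show that with probability at least $1-e^{-2L}$,
\[
\sup_{\theta\in[0,\theta_*],\Lambda,\tau_0}\|\operatorname{Cor}^{(2)}_{(1-\theta)\otimes\mu^{\tau_0}}\|_{op}\le \frac{\epsilon L}{4\beta}.
\]
Indeed, $\int_0^{\theta_*}\frac{dt}{1-t}=4\beta$, so this gives $R\le e^{\epsilon L}$, and the terminal law is a product measure. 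Hence $\mathbf{gap}_G^{-1}\le |V|R\le |V|e^{\epsilon L}$.

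\textbf{Choice of parameters.} First choose $p_0=p_0(\epsilon,\beta,C_\alpha)$ small enough that $p_0(\Delta-1)<1$ for every $\Delta\le e^{C_\alpha}$, and that
\[
\alpha_*=\tfrac12\log\frac{(\Delta-2)^{\Delta-2}}{p_0(\Delta-1)^{\Delta-1}(1-p_0)^{\Delta-2}}
\]
is very large uniformly over $3\le\Delta\le e^{C_\alpha}$. Since $\alpha_*\ge \tfrac12\log(1/p_0)-O_{C_\alpha}(1)$, this is possible. We need $\alpha_*$ to exceed a multiple of $\beta\Delta/\epsilon$, which is uniformly bounded because $\Delta\le e^{C_\alpha}$. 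The case $\Delta\le 2$ is handled directly, or by treating $G$ as having degree bound $3$. Next choose $K_0$ so that $\rho(\beta,\Delta,K_0)<p_0/4$ for all $\Delta\le e^{C_\alpha}$, and set $\epsilon_0<p_0/2$. Assumption \ref{thiasssumptionnew} then holds; note that only independence of $|h_u|$ is needed.

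\textbf{Union bound.} Apply Propositions \ref{proposition672} and \ref{proposition672newtheorem} with $m=\frac{\epsilon L}{8\beta\Delta}$. Each edge fails with probability at most $c\,e^{2\gamma_*}e^{-\alpha_* m}$. Summing over at most $2\Delta|V|$ rows and columns, the total failure probability is at most
\[
4\Delta|V|\,c\,e^{2\gamma_*}\exp\!\Big(-\frac{\alpha_*\epsilon L}{8\beta\Delta}\Big).
\]
On the complementary event, the interpolation bound \eqref{interpolationnorm} gives operator norm at most $2\Delta m=\epsilon L/(4\beta)$, as required.

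\textbf{Main obstacle: bounding the failure probability by $e^{-2L}$.} We need the prefactor $e^{2\gamma_*}=\big(\frac{1-p_0}{p_0(\Delta-2)}\big)^2\approx p_0^{-2}$, together with $\Delta|V|$, to be absorbed. To do this, take $\alpha_*\epsilon/(8\beta\Delta)\ge 4$, and then choose $\kappa_0$ so that $L\ge\kappa_0\ln|V|$ implies $|V|\le e^{L/\kappa_0}\le e^{L}$. The requirement then reduces to
\[
4\Delta c\,p_0^{-2}\le e^{L}.
\]
This is not automatic for small $L$. The $p_0^{-2}$ blowup competes with the gain in $\alpha_*$, but $\alpha_*$ grows only like $\tfrac12\log(1/p_0)$. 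We resolve this by taking $\kappa_0$ large, depending on $p_0$ and hence on $\epsilon,\beta,C_\alpha$, and assuming $|V|\ge 2$, so that $L\ge\kappa_0\ln 2$ exceeds $\log(4\Delta c\,p_0^{-2})$. If $|V|$ is allowed to be $1$, the statement is trivial.
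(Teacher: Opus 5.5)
Your proposal is correct and follows essentially the same route as the paper. Like the paper, you rerun the edge-field denoising argument with the truncation level $m$ in Propositions \ref{proposition672} and \ref{proposition672newtheorem} tied to $L$ instead of $\ln|V|$, take $p_0$ small so that $\alpha_*$ is large, and take $\kappa_0$ large (using $|V|\ge 2$) to absorb the $|V|\Delta e^{2\gamma_*}$ prefactor into $e^{-2L}$. The only differences are cosmetic: the paper takes $m=L/\sqrt{\alpha_*}$ and requires $16\beta\Delta/\sqrt{\alpha_*}\le\epsilon$, whereas you calibrate $m=\epsilon L/(8\beta\Delta)$ directly and require $\alpha_*\epsilon/(8\beta\Delta)\ge 4$; your explicit uniformity over $\Delta\le e^{C_\alpha}$ and your handling of $\Delta\le 2$ fill in details the paper leaves implicit.
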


The proof of Theorem \ref{refinementtheorem} is presented after the proof of Theorem \ref{statementtheorem906}.

\begin{proof}[\proofname\ of Theorem \ref{statementtheorem906}]
 The external field of RFIM measure $\nu_T$ is $y_T+h\overset{d}{=}T\sigma^*+B_T+h$, where $(B_t)$ is a standard Brownian motion and independently $\sigma^*\sim\nu_0=\mu_G$. As $\sigma^*$ is a binary vector and each coordinate of $h$ has a symmetric distribution, the vector of absolute values $|y_T+h|$ (absolute value applied entrywise) has the distribution $|T+B_T+h|$ and thus has independent coordinates over $V$. Now for any $\epsilon>0$, by Theorem \ref{refinementtheorem} we can find $K_0,\kappa_0,\epsilon_0>0$ such that whenever
\begin{equation}\label{intermediateprobest}\mathbb{P}(|T+B_T+h|_v\leq K_0)\leq\epsilon_0
 ,\end{equation}
  then for all $L\geq\kappa_0\ln|V|$, the following Poincaré inequality holds with probability at least $1-e^{-2L}$ over the external field of $\nu_T$:
\begin{equation}
\label{firstpoincareeq}\operatorname{Var}_{\nu_T}(\varphi)\leq 
    |V|\exp(\epsilon L)
    \mathcal{E}_{\nu_T}(\varphi,\varphi),\quad \forall \varphi:\{-1,+1\}^V\to\mathbb{R}.
\end{equation}
Let $Z$ be a standard Gaussian variable, then for each $v\in V$, 
$$
\mathbb{P}(|T+\sqrt{T}Z+h_v|\leq K_0)\leq \mathbb{P}(|h_v|\geq H)+\mathbb{P}(|T+\sqrt{T}Z|\leq K_0+H),
$$
and we can set $H$ sufficiently large followed by setting $T$ sufficiently large so that the two terms on the right hand side are both smaller than $\epsilon_0/2$ (the smallness for the first term follows from tightness of any single real-valued random variable). Thus for $T$ sufficiently large with respect to $\epsilon$, we verify that \eqref{intermediateprobest} holds.

Let $E_L$ denote the event that \eqref{firstpoincareeq} holds for this $L$. Then by Markov's inequality, with $\mathbb{P}_h$-probability at least $1-\delta$, $\mathbb{P}(E_L^c\mid h)\leq \delta^{-1}e^{-2L}.$ Although this $\mathbb{P}_h$-probability event depends on the choice of $L>0$, we can turn this into an $\mathbb{P}_h$- event that holds with probability $1-\delta$ simultaneously for all $L\geq \kappa_0\log|V|+1$. Denote by 
$$
Z(h,w):=\frac{1}{|V|}\mathbf{gap}^{-1}_{\nu_T(h,w)}
$$where $h$ is the initial quenched field and $w$ is randomness from the localization scheme. Then \eqref{firstpoincareeq} can be rewritten as 
$$
\mathbb{P}_{h,w}(Z\geq e^{\epsilon L})\leq e^{-2L},\quad\forall L\geq \kappa_0\ln|V|.
$$Define the quenched probability 
$$
q_h(L):=\mathbb{P}_w(Z(h,w)\geq e^{\epsilon L}\mid h),
$$so that $\mathbb{E}_hq_h(L)\leq e^{-2L}$. Now we define 
$$
Y(h):=\int_{\kappa_0\ln|V|}^\infty e^{L}q_h(L)dL,
$$then by Fubini, 
$$
\mathbb{E}_h Y(h)\leq \int_{\kappa_0\ln|V|}^\infty e^{L-2L}dL=e^{-\kappa_0\ln |V|}.
$$Thus by Markov inequality, for $\mathbb{P}_h$-probability at least $1-\delta$,
$$
Y(h)\leq \frac{1}{\delta}e^{-\kappa_0\ln|V|}.
$$For this quenched $h$, we can find a constant $\hat{c}>0$ such that for any $L\geq \kappa_0\ln |V|+1$, since $q_h(L)$ is decreasing in $L$, that 
$$
q_h(L)\leq \hat{c}\delta^{-1}e^{-L}.
$$
(Indeed, since $q_h(L)$ is decreasing in $h$, then $Y(h)\geq \int_{L-1}^Le^{x}q_h(L)dx\geq \frac{1}{\hat{c}}e^{L}q_h(L)$, and $\hat{c}=\frac{1}{1-e^{-1}}\geq 1$).

For \(L\ge L_0:=\kappa_0\ln |V|+1\), we denote by
\[
 \mathcal{E}_L:=\{Z(h,\omega)\le e^{\epsilon L}\}.
\]
The preceding argument implies that with \(P_h\)-probability at least \(1-\delta\),
for every \(L\ge L_0\),
\[
P_\omega( \mathcal{E}_L^c\mid h)=q_h(L)\le \hat c\delta^{-1}e^{-L}.
\]
On this quenched-good event, for every \(L\ge L_0\), it holds that for any $\varphi$ possibly depending on $h$ but not on $y_T$, 
\begin{equation}\label{whatisbutnoton}
\mathbb{E}[\operatorname{Var}_{\nu_T}(\varphi)\mid h]\leq |V|\exp(\epsilon L)\mathcal{E}_{\nu_0}(\varphi,\varphi)+\hat{c}\delta^{-1}e^{-L}\operatorname{osc}(\varphi)^2,    
\end{equation}
 where we use the fact that $t\mapsto \mathcal{E}_{\nu_t}(\varphi,\varphi)$ is a supermartingale (see \cite{eldan2022spectral}, Lemma 9) and $\operatorname{Var}_{\nu_T}(\varphi)\leq\operatorname{osc}(\varphi)^2$. 
Next, we may assume that 
$$
\mathcal{E}_{\nu_0}(\varphi,\varphi)\leq e^{-1}|V|^{-\kappa_0}\operatorname{osc}(\varphi)^2,
$$since if otherwise, then 
\begin{equation}\label{firstvarphis}
\operatorname{Var}_{\nu_0}(\varphi)\leq\operatorname{osc}(\varphi)^2\leq e|V|^{\kappa_0}\mathcal{E}_{\nu_0}(\varphi,\varphi)
\end{equation} and we are done (as it directly implies our desired estimate.) We abbreviate by $\hat{\delta}=\delta/\hat{c}$ and then we choose 
$$
L=\log\left(\frac{\operatorname{osc}(\varphi)^2}{\hat{\delta}\mathcal{E}_{\nu_0}(\varphi,\varphi)}\right)\geq\kappa_0\ln|V|+1
,$$
and then \eqref{whatisbutnoton} implies that
$$\begin{aligned}
&\mathbb{E}[\operatorname{Var}_{\nu_T}(\varphi)\mid h]\leq |V|\hat{\delta}^{-\epsilon}\mathcal{E}_{\nu_0}(\varphi,\varphi)^{1-\epsilon}\operatorname{osc}(\varphi)^{2\epsilon}+\mathcal{E}_{\nu_0}(\varphi,\varphi),\\&\leq (|V|\hat{\delta}^{-\epsilon}+|V|^{-\kappa_0\epsilon}
)\mathcal{E}_{\nu_0}(\varphi,\varphi)^{1-\epsilon}\operatorname{osc}(\varphi)^{2\epsilon}.\end{aligned}$$
Now we combine this estimate with Theorem \ref{theorem5.345} and get that, with $\mathbb{P}_h$-probability at least $1-2\delta$, for all $\varphi:\{-1,+1\}^V\to\mathbb{R}$,
\begin{equation}
\label{secondvarphis}\operatorname{Var}_{\nu_0}(\varphi)\leq 2\widehat{K}(|V|\hat{\delta}^{-\epsilon})^{1/p}(|V|/\delta)^{1/q}\mathcal{E}_{\nu_0}(\varphi,\varphi)^{(1-\epsilon)/p}\cdot\operatorname{osc}(\varphi)^{2\epsilon/p+2/q}
,\end{equation}
where $p=e^{2T/c_0},1/p+1/q=1$ and $\widehat{K}=e^{-c_0/(2q)}$, and
where we simply ignore the $|V|^{-\kappa_0\epsilon}$ term and then take $1/p'=(1-\epsilon)/p$ and $1/q'=\epsilon/p+1/q$. This leads to the desired estimate by combining \eqref{firstvarphis} and \eqref{secondvarphis} in one single estimate and adjusting the constant $\kappa$. Applying the preceding argument with \(\delta/2\) in place of \(\delta\), and then absorbing the resulting constants into \(K\), we obtain the stated \(1-\delta\) probability.
\end{proof}

Finally we complete the proof of Theorem \ref{refinementtheorem}:

\begin{proof}[\proofname\ of Theorem \ref{refinementtheorem}]
    We simply take $m=L/\sqrt{\alpha_*}$ in Proposition \ref{proposition672}. Then by the same reasoning in Proposition \ref{proposition4.3} and Corollary \ref{2corollary44}, we deduce that with probability at least $1-|V|\Delta\frac{2}{(1-e^{-\alpha_*})(1-e^{-2\alpha_*})}e^{2\gamma_*}e^{-\sqrt{\alpha_*}\cdot L}$, $\mu_G$ has spectral gap at least 
    $$\mathbf{gap}_G^{-1}\leq |V|\exp(\frac{ 16\beta\Delta}{\sqrt{\alpha_*}}L).
$$
Since $\alpha_*\uparrow\infty$ as $p_0\downarrow 0$, we can find a sufficiently small $p_0>0$ (with respect to the $\epsilon>0$) such that $\frac{16\beta\Delta}{\sqrt{\alpha_*}}\leq\epsilon$ and $\alpha_*\geq 100$. Then for this $p_0>0$ we can find $\kappa_0>0$ so that whenever $L\geq\kappa_0\ln|V|$, we have $|V|\Delta\frac{2}{(1-e^{-\alpha_*})(1-e^{-2\alpha_*})}e^{2\gamma_*}e^{-\sqrt{\alpha_*}L}\leq e^{-2L}$, so that with probability at least $1-e^{-2L}$, we have $\mathbf{gap}_G^{-1}\leq|V|\cdot\exp(\epsilon L)$ holds. 
\end{proof}

In Theorem \ref{statementtheorem906}, if we take $\varphi=\mathbf{1}_S$ for some $S\subset\{-1,+1\}^V$, we get
$$
\mu_G(S)(1-\mu_G(S))\leq AQ(S,S^c)^{1/p},
$$
where $Q(S,S^c)=\sum_{x\in S,y\in S^c}\mu_G(x)P(x,y)$ and $P$ is the transition probability of Glauber dynamics. 
To construct a sampling algorithm from this estimate, we need the following convergence rate analysis via a weak conductance bound:

\begin{lemma}\label{warmstarttheorem}(\cite{el2026fast}, Lemma 3.5)
    Consider a Glauber dynamics with discrete-time Markov chain $(\sigma_k)_{k\geq 0}$. Let $\pi_k$ be the distribution of $\sigma_k$ and $\pi_\infty=\mu_G$ be the invariant measure. Suppose that $\pi_\infty$ satisfies the following weak conductance bound
    \begin{equation}
\pi_\infty(S)(1-\pi_\infty(S))\leq AQ(S,S^c)^{1/p},\quad\forall S\subseteq \{-1,1\}^V,        
    \end{equation}
    where $p\geq 1,A^p\geq 2/4^{p-1}$, and the following warm start condition holds:
    \begin{equation}
    \max_{S\subseteq \{-1,+1\}^V}\frac{\pi_0(S)}{\pi_\infty(S)}\leq M,
    \end{equation} then 
    \begin{equation}
 d_{\operatorname{TV}}(\pi_k,\pi_\infty)\leq M(\frac{A^{2p}\log k}{k})^{1/(2p-1)}       ,\quad k\geq 1.
    \end{equation}
\end{lemma}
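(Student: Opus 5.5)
The plan is to work with the density $f_k:=\pi_k/\pi_\infty$ and to run a "weak Cheeger $\Rightarrow$ weak Poincaré $\Rightarrow$ polynomial decay" argument. It has three steps: a co-area step, an energy-budget step, and a refinement of the exponent.

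\emph{Step 1 (density and co-area).} By reversibility, $f_{k+1}=Pf_k$. Since $P$ is Markov, the warm start is preserved: $0\le f_k\le M$ for all $k$. The heat-bath Glauber operator is an average of conditional expectations, hence positive semidefinite. Therefore
\begin{equation*}
\operatorname{Var}_{\pi_\infty}(f_k)-\operatorname{Var}_{\pi_\infty}(f_{k+1})=\langle f_k,(I-P^2)f_k\rangle_{\pi_\infty}\geq \mathcal{E}(f_k,f_k),
\end{equation*}
where the Dirichlet form is taken with respect to the edge measure $Q$. Next I use the co-area formula over the level sets $S_s=\{f_k>s\}$:
\begin{equation*}
\sum_{x,y}Q(x,y)|f_k(x)-f_k(y)|=\int_0^M Q(S_s,S_s^c)\,ds\geq A^{-p}\int_0^M\bigl(\pi_\infty(S_s)(1-\pi_\infty(S_s))\bigr)^p ds .
\end{equation*}
By Jensen, the right-hand side is at least $A^{-p}M^{1-p}\bigl(\int_0^M \pi_\infty(S_s)(1-\pi_\infty(S_s))\,ds\bigr)^p$.

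\emph{Step 2 (from level sets to total variation, then an energy budget).} The integral $\int_0^M\pi_\infty(S_s)(1-\pi_\infty(S_s))\,ds$ is comparable to $\mathbb{E}_{\pi_\infty}|f_k-\mathbb{E}f_k|=2d_{TV}(\pi_k,\pi_\infty)=:2d_k$; this is a standard identity for $\int\pi(f>s)\pi(f\le s)\,ds$. By Cauchy--Schwarz,
\begin{equation*}
\sum Q|f(x)-f(y)|\le \sqrt{2\mathcal{E}(f,f)}\,\sqrt{\textstyle\sum Q (f(x)+f(y))^2}\lesssim \sqrt{M\,\mathcal{E}(f,f)}\quad\text{(using $f\le M$).}
\end{equation*}
Combining these gives $\mathcal{E}(f_k,f_k)\gtrsim M^{-1}\bigl(A^{-p}M^{1-p}d_k^{p}\bigr)^2$. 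Summing the variance decrements, and using that $d_k$ is nonincreasing, yields
\begin{equation*}
k\cdot \min_{j\le k}\mathcal{E}(f_j,f_j)\le \operatorname{Var}(f_0)\le M,
\end{equation*}
and hence already a bound of the form $d_k\lesssim M(A^{2p}/k)^{1/(2p)}$.

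\emph{Step 3 (sharpening the exponent).} This is the main obstacle: upgrading the exponent from $1/(2p)$ to $(A^{2p}\log k/k)^{1/(2p-1)}$. The loss in Step 2 comes from bounding $\operatorname{Var}(f_0)$ crudely by $M$, and from the Cauchy--Schwarz step, which pays a full factor $M$. To recover the better exponent I would run the argument on a dyadic decomposition of time. On each block $[2^j,2^{j+1}]$, I would use the current variance $V_j=\operatorname{Var}(f_{2^j})$ in place of $M$ in the Cauchy--Schwarz and Jensen steps, since $\sum Q(f+f)^2$ can be controlled by $V_j+1$ after centering. This produces a recursive inequality of the form
\begin{equation*}
V_j-V_{j+1}\gtrsim 2^j\,d_{2^{j+1}}^{2p}/(A^{2p}V_j^{\,\cdot}),
\end{equation*}
and I would then optimize the relation between $d$ and $V$ (recall $d^2\le V\le Md$). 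Summing over the $\log_2 k$ blocks is where the $\log k$ factor should enter, and solving the recursion should give the power $1/(2p-1)$. The hypothesis $A^p\ge 2/4^{p-1}$ is presumably only needed so that the bound is nontrivial at $k=1$ and the constants close up, and I would check this at the end.
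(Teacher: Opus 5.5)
Since the paper quotes this lemma from \cite{el2026fast} without proof, your argument has to stand on its own, and as written it does not reach the statement. Steps 1 and 2 are sound, and one point there matters later. The factor $\sqrt{\sum Q(f(x)+f(y))^2}$ is superfluous: since $\sum_{x,y}Q(x,y)=1$, Cauchy--Schwarz gives $\sum_{x,y}Q(x,y)|f(x)-f(y)|\le\sqrt{2\mathcal{E}(f,f)}$ directly. Dropping that factor is what yields linear dependence on $M$ at the end. With it, co-area, weak conductance, Jensen and $\int_0^M g\,ds\ge d_j$ give $\mathcal{E}(f_j,f_j)\ge 2A^{-2p}M^{2-2p}d_j^{2p}$, where $g(s):=\pi_\infty(S_s)(1-\pi_\infty(S_s))$. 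Combined with the global budget $\operatorname{Var}(f_0)\le M$, this only gives $d_k\lesssim M(A^{2p}/k)^{1/(2p)}$. That does not imply the claimed rate $(A^{2p}\log k/k)^{1/(2p-1)}$. Step 3, where the actual content lies, is a plan rather than a proof: the recursion has an unspecified power of $V_j$, and the proposed mechanism does not work. The factor $M^{1-p}$ from Jensen comes from the length of the range of levels $[0,\|f_k\|_\infty]$, not from the variance, and it is sharp. Take $f=M\ge 2$ on a set of mass $a\le 1/M$ and $f$ constant elsewhere. Then $\int g^p\asymp M^{1-p}(\int g)^p$, while $\operatorname{Var}(f)\asymp aM^2$ can be much smaller than $M$. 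So $M$ cannot be replaced by $V_j$ in that step, and the Cauchy--Schwarz step never needed to pay $M$.

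The missing idea is to spend only the variance left at time $m=\lfloor k/2\rfloor$, and to bound it \emph{linearly} in $d_m$. Since $|f_m-1|\le M$, you have $\operatorname{Var}_{\pi_\infty}(f_m)\le\|f_m-1\|_\infty\|f_m-1\|_{L^1(\pi_\infty)}\le 2Md_m$; you note $V\le Md$ but never use it. Summing your decrement inequality over $j\in[m,k)$, and using that $d_j$ is nonincreasing, gives
\[
d_k^{2p}\le\frac{2A^{2p}M^{2p-1}}{k}\,d_{\lfloor k/2\rfloor},\qquad k\ge 2 .
\]
Because $2p/(2p-1)=1+1/(2p-1)$, the ansatz $d_k\le Bk^{-1/(2p-1)}$ closes by induction, using $\lfloor k/2\rfloor\ge k/3$, with $B=\max\{1,M(2\cdot 3^{1/(2p-1)}A^{2p})^{1/(2p-1)}\}$. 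This produces the exponent $1/(2p-1)$ with no logarithm at all; contrary to your expectation, nothing logarithmic comes from summing blocks. The $\log k$ in the statement merely absorbs the constant, and with $A^p\ge 2/4^{p-1}$ the stated bound follows for $k$ beyond a threshold depending only on $p$. That restriction is unavoidable, so your planned check at $k=1$ must fail: at $k=1$ the right-hand side is $0$ because $\log 1=0$. For example, take Glauber dynamics for two non-interacting spins with zero field. It satisfies the hypotheses with $p=1$ and $A=2$, and started from $(+,+)$ (so $M=4$) it has $d_{\operatorname{TV}}(\pi_1,\pi_\infty)=1/4$. The paper only applies the lemma with $k=|V|^{C_*}$, so this does not affect its results.
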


\subsection{The sampling algorithm and its convergence}

In this section, we define the sampling algorithm claimed in Theorem \ref{theorem231230}. The sampling algorithm is almost the same as the one given in \cite{el2026fast}, Theorem 1.4. Assume without loss of generality that the graph $G$ is connected, otherwise the Ising models on each connected component of $G$ are mutually independent and we only need to design a sampler for each connected component.

We first fix an ordering $v_1,\cdots,v_{|V|}$ of the vertices of $G$ in such a way that for each $1\leq i\leq |V|$, the subset $\{v_1,\cdots,v_i\}$ is connected in $G$ and that $v_{i+1}$ is also connected to this cluster by an edge in $G$. Let $G^{(i)}$ denote the induced subgraph of $G$ by these vertices $v_1,\cdots,v_i$. The algorithm works in the following three steps:

\begin{enumerate}
    \item Begin with a perfect sample $X_*^{(1)}$ of the RFIM on $G^{(1)}=\{v_1\}$, so that $X_*^{(1)}\in\{-1,+1\}$ is drawn with probability proportional to $e^{\pm h_{v_1}}$.
    
    \item For each $i\geq 2$, we consider a Glauber dynamics $(X_k^{(i)})_{k\geq 0}$ on the RFIM measure $\mu_{G^{(i)}}$ on  $G^{(i)}$, which is initialized from $X_0^{(i)}=[X_*^{(i-1)},\sigma_{v_i}]\in\{-1,1\}^{G^{(i)}}$ (the concatenation of $X_*^{(i-1)}$ with a spin $\sigma_{v_i}\in\{\pm 1\}$ drawn independently with probability proportional to $e^{\pm h_{v_i}}$.)
    \item Then run Glauber dynamics from this initialization in time $k_*=|V|^{C_*}$ and obtain $X_*^{(i)}=X_{k_*}^{(i)}$. 
\end{enumerate}
The constant $C_*$ is fixed in the next Proposition in a way that depends on the error $\epsilon$ and the probability $\delta$. 
\begin{Proposition}With $\mathbb{P}_h$-probability $1-\delta$, the algorithm defined in steps (1)-(3) above produces a sample that is $\epsilon$-close in total variation distance to $\mu_G$, and the algorithm runs in time that is polynomial in $|V|$. Moreover, this can be achieved for any $\epsilon,\delta\geq1/\operatorname{poly}(|V|)$.
This completes the proof of Theorem \ref{theorem231230}.    
\end{Proposition}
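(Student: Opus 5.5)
The argument is an induction along the vertex ordering, with a union bound over the quenched fields of all the subgraphs $G^{(i)}$. The plan is to apply, for each $i$, Theorem \ref{statementtheorem906} to the RFIM $\mu_{G^{(i)}}$ and then Lemma \ref{warmstarttheorem} with a warm start built from the previous stage.

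\textbf{Step 1: the hypotheses pass to every $G^{(i)}$.} Each $G^{(i)}$ is an induced subgraph of $G$, so it inherits $\alpha$-stretched exponential growth with the same $C_\alpha$, because balls in $G^{(i)}$ sit inside balls in $G$. The same is not automatic for $\operatorname{WSM}(C)$, since removing vertices changes the boundary. I would argue it via monotonicity. Conditioning on the spins of $V\setminus V(G^{(i)})$ being all plus or all minus sandwiches the free-boundary marginal on $G^{(i)}$. The boundary $\partial B_r(v)$ in $G^{(i)}$ is contained in $\partial B_r(v)\cup (V\setminus V(G^{(i)}))$ in $G$, and the plus/minus discrepancy at $v$ in $G^{(i)}$ is dominated by that in $G$ with the larger pinned set. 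This is the step I expect to be the main obstacle; if it fails, I would instead assume WSM hereditarily, as \cite{el2026fast} effectively does for boxes.

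Granting this, Theorem \ref{statementtheorem906} applied with $\delta/|V|$ and a union bound over $i$ gives the following. With probability at least $1-\delta$, every $\mu_{G^{(i)}}$ satisfies
\[
\operatorname{Var}(\varphi)\le K(|V|/\delta)^{1/q}|V|^\kappa\,\mathcal{E}(\varphi,\varphi)^{1/p}\operatorname{osc}(\varphi)^{2/q}.
\]
Taking $\varphi=\mathbf{1}_S$ yields the weak conductance bound with $A\le \operatorname{poly}(|V|,1/\delta)$.

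\textbf{Step 2: warm start.} Let $\pi_0^{(i)}=\mu_{G^{(i-1)}}\otimes \operatorname{Ber}(e^{\pm h_{v_i}})$. Since $v_i$ has at most $\Delta$ neighbours in $G^{(i-1)}$, the density ratio satisfies $\pi_0^{(i)}(\sigma)/\mu_{G^{(i)}}(\sigma)\le e^{2\beta\Delta}$ uniformly. This holds because the interaction terms between $v_i$ and the rest change the Hamiltonian by at most $\beta\Delta$ in either direction, and similarly for the partition function. So $M=e^{2\beta\Delta}$ is a constant.

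\textbf{Step 3: error propagation.} The actual initialization uses $X_*^{(i-1)}$ rather than an exact sample. Since the Markov kernel is a contraction in total variation, we get
\[
\varepsilon_i\le \varepsilon_{i-1}+M\left(\frac{A^{2p}\log k_*}{k_*}\right)^{1/(2p-1)},
\]
where $\varepsilon_i:=d_{TV}(\operatorname{Law}X_*^{(i)},\mu_{G^{(i)}})$. Summing over $i\le |V|$ gives $\varepsilon_{|V|}\le |V|M(A^{2p}\log k_*/k_*)^{1/(2p-1)}$. Because $p$ and $q$ are constants depending on $(C,C_\alpha,\alpha,\beta)$ and $A$ is polynomial in $|V|$ and $1/\delta$, choosing $k_*=|V|^{C_*}$ with $C_*$ large (depending on $p$, $\kappa$ and the polynomial orders of $1/\epsilon$ and $1/\delta$) makes this at most $\epsilon$. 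The total running time is $|V|\cdot k_*$ Glauber steps, each costing $O(\Delta)$, which is polynomial.
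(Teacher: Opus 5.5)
Your scheme is the paper's: induction along the ordering with $\varepsilon_i\le\varepsilon_{i-1}+(\text{warm-start bound})$, Lemma \ref{warmstarttheorem} with $A_i$ supplied by Theorem \ref{statementtheorem906} for $\mu_{G^{(i)}}$, a union bound with $\delta/|V|$, and $k_*=|V|^{C_*}$. Steps 2 and 3 are correct. Your warm-start constant $e^{2\beta\Delta}$ is sharper than the paper's $e^{4\beta\exp(C_\alpha)}$; here $\Delta\le e^{C_\alpha}$ follows from the growth bound at $r=1$.

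The gap is the monotonicity argument in Step 1, which runs in the wrong direction. Sandwiching $\mu_{G^{(i)}}$ between $\mu_G$ with all of $V\setminus V(G^{(i)})$ pinned to $-$ and to $+$ is fine. But it only bounds the discrepancy in $G^{(i)}$ by the plus/minus discrepancy on a \emph{smaller} domain, one in which vertices of $V\setminus V(G^{(i)})$ are pinned as well, possibly at distance one from $v$. By monotonicity in the domain, that quantity is at least the one $\operatorname{WSM}(C)$ controls, not at most. When $v$ has a neighbour outside $G^{(i)}$, it does not decay in $r$ at all. (Also, the boundary of the $G^{(i)}$-ball need not lie in $\partial B_r(v)\cup(V\setminus V(G^{(i)}))$, since $d_{G^{(i)}}$ can exceed $d_G$.)

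Nor should you expect hereditary WSM to come for free. Deleting a vertex $w$ also deletes the effective field $\tfrac12\log\frac{\cosh(h_w+\beta)}{\cosh(h_w-\beta)}$ that it exerts on its neighbours. So some $G^{(i)}$ can be much less disordered than $G$, for instance under an ordering that exhausts a well-connected core before its pendant vertices.

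That said, the paper's proof has exactly the same hole. It applies Theorem \ref{statementtheorem906} to each $\mu_{G^{(i)}}$ without comment, tacitly assuming $\operatorname{WSM}(C)$, with the same $C$, for every $G^{(i)}$. Growth does pass to induced subgraphs, as you note. So your fallback, making WSM for the $G^{(i)}$ an explicit hypothesis, is exactly what is needed, and with it your argument coincides with the paper's.
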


\begin{proof} The proof follows the same lines as in \cite{el2026fast}, Proposition 3.6. 
Fix $\epsilon>0$ as the target total variation distance, we prove that whenever $C_*$ is large enough, then the sampler defined by (1)-(3) above satisfies 
\begin{equation}\label{estimate1011}
    d_{TV}(\mu_{G^{(i)}},\pi^{(i)}_{k_*})\leq\frac{i\epsilon}{|V|},\quad\forall 1\leq i\leq |V|.
\end{equation}
Then taking the final $i=|V|$ so $G^{(i)}=G$ yields $d_{TV}(\mu_G,\pi^{(|V|}_{k_*})\leq\epsilon$. This estimate will be proved inductively, and for the base case the bound is trivial as we use a perfect sample.

Assume the bound holds for $i-1$, then $\pi_0^{(i)}$ has total variation distance at most $(i-1)\epsilon/|V|$ to $\mu_{G^{(i-1)}}\otimes\mu_{v_i}$. We then couple the Glauber dynamics update process $X_k^{(i)}$ starting from $\pi_0^{(i)}$ with the update process $Y_k^{(i)}$ starting from $\mu_{G^{(i-1)}}\otimes \mu_{v_i}$ via the optimal total variation coupling on the initializations $(X_0^{(i)},Y_0^{(i)})$
and then use identity coupling to couple the process if $X_0^{(i)}=Y_0^{(i)}$. Then we have 
\begin{equation}\label{estimate1018}
    d_{TV}(X_k^{(i)},\mu_{G^{(i)}})\leq d_{TV}(Y_k^{(i)},\mu_{G^{(i)}})+d_{TV}(X_0^{(i)},Y_0^{(i)})\leq (i-1)\epsilon/|V|+d_{TV}(\mu_{G^{(i)}},Y_k^{(i)}).
\end{equation}

Now the distribution of $Y_0^{(i)}$, $\mu_{G^{(i-1)}}\otimes \mu_{v_i}$, has a Radon-Nikodym density with respect to $\mu_{G^{(i)}}$ bounded by at most $M=e^{4\beta \exp(C_\alpha)}$
since $G$ has maximal degree smaller than $\exp(C_\alpha)$. Then it satisfies the warm start condition of Lemma \ref{warmstarttheorem}. Then Lemma \ref{warmstarttheorem} provides that with probability $1-\delta$, with $A_i:=K\delta^{-1/q}|G^{(i)}|^\kappa$, we have
\begin{equation}\label{dtv1025}
d_{TV}(Y_k^{(i)},\mu_{G^{(i)}})\leq M(\frac{A_i^{2p}\log k}{k})^{1/(2p-1)}
.\end{equation} Then we replace $\delta$ by $\delta/|V|$ and use a union bound so that this estimate \eqref{dtv1025} holds uniformly for all $1\leq i\leq |V|$ with probability at least $1-\delta$. We absorb the factor $|V|^{1/q}$ into the polynomial factor $|G^{(i)}|^\kappa\leq |V|^\kappa$ by increasing the value of $\kappa$.

Under this event, we take $k=|V|^{C_*}$ for a sufficiently large $C_*$ and conclude that $\max_{1\leq i\leq |V|}d_{TV}(\mu_{G^{(i)}},Y_k^{(i)})\leq \epsilon/|V|$. This, combined with \eqref{estimate1018}, leads to the claimed bound \eqref{estimate1011}.
\end{proof}

\section{Control of operator norm under WSM}
\label{section666}
This section is devoted to the proof of Theorem \ref{theorem83777}. We will essentially generalize the computations in \cite{el2026fast}, Section 4 to the graph $G$. Half of the computations only rely on monotonicity and the design of the $\operatorname{SL}$ path, so they generalize immediately to RFIM on general graphs. The other half of the computations rely on polynomial volume growth of the lattice, and we prove here that after a careful adjustment, the same computation holds on $G$ with $\alpha$-stretched exponential growth for some $\alpha\in(0,1)$.

Denote by $A_t=\operatorname{Cov}(\nu_t)$. For a vertex $u\in V$ and integer $\ell\geq 0$, define 
\begin{equation}
    \delta_t(u,\ell):=d_{TV}(\nu_t(\sigma_u\in\cdot\mid\sigma_{\partial B_\ell(u)}=+),\nu_t(\sigma_u\in\cdot\mid\sigma_{\partial B_\ell(u)}=-)),
\end{equation}where we use the graph distance on $G$ to define the ball $B_\ell(u)$. Let $\langle;\rangle_t$ denote the Gibbs average with respect to $\nu_t$, i.e. 
$$
\langle \sigma_u;\sigma_v\rangle_t:=(\operatorname{Cov}(\nu_t))_{u,v}:=\langle\sigma_u\sigma_v\rangle_t-\langle\sigma_u\rangle_t\langle\sigma_v\rangle_t
.$$
Then by FKG inequality we can verify that (see \cite{el2026fast}, Lemma 4.1), for any integer $\ell\leq d(u,v)$, we have
$$
0\leq \langle \sigma_u;\sigma_v\rangle_t\leq\delta_t(u,\ell).
$$
It follows that 
\begin{equation}\label{tracemomentexpansions}
\operatorname{Tr}(A_t^p)\leq\sum_{u_1,\cdots,u_p\in V}\prod_{i=1}^p \delta_t(u_i,\ell_i)
\end{equation} provided that $\ell_i\leq d(u_i,u_{i-1})$ for each $i$ and $u_0=u_p$.
Since each $\delta_t(u_i,\ell_i)$ is bounded by 1, we can discard some terms in the product $\prod_{i=1}^p$, which provides us with more freedom in choosing $\ell_i$. Specifically, we can prove the following:

\begin{Proposition}\label{propositiononline869}
    Let $p\geq 1,t\geq 0$, consider a subset $A\subseteq [p]$ and a tuple of vertices $u_1,\cdots,u_p\in G$. Then for any sequence of integers $(\ell_i)_{i\in A}$ where 
\begin{equation}\label{separationcondition}
d(u_i,u_{i-1})\geq\ell_i,\quad i\in A,\quad\text{ and   }d(u_i,u_j)\geq 2(\ell_i+\ell_j),\quad i\neq j\in A
    ,\end{equation}we have the factorization
    \begin{equation}
  \label{nowwepeeloff}\mathbb{E}\left[\prod_{i\in A}\delta_t(u_i,\ell_i)\mid h\right]  \leq\prod_{i\in A}\left(\sum_{v\in B_{\ell_i}(u_i)}(1+\mathbf{1}_{v=u_i})\delta_0(v,\ell_i)\right).  
    \end{equation}Here the expectation $\mathbb{E}$ is taken with respect to the Brownian localization path $\operatorname{SL}$ only.
\end{Proposition}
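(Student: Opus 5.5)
We follow the strategy of \cite{el2026fast}, Section 4, and use only monotonicity together with the structure of the $\operatorname{SL}$ path; the geometry of $G$ enters only through the disjointness of balls.

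\textbf{Step 1 (reduction to a martingale-type quantity).} For each $i\in A$, the quantity $\delta_t(u_i,\ell_i)$ depends on $\nu_t$ only through the Gibbs measures on $B_{\ell_i}(u_i)$ with $\pm$ boundary. By the FKG inequality and the monotone coupling of Section \ref{howtousecouplinghere}, we have
\[
\delta_t(u,\ell)=\nu_t^{B_\ell(u),+}(\sigma_u=1)-\nu_t^{B_\ell(u),-}(\sigma_u=1)
\le\sum_{v\in B_\ell(u)}\big(\text{tilt sensitivity}\big).
\]
Concretely, Lemma \ref{lemma856} lets us write the restricted measure $\nu_t^{B_\ell(u),\pm}$ as the RFIM on the ball with field $h+y_t$. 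We then bound $\delta_t(u,\ell)$ by a quantity $D_t(u,\ell)$ that is a function only of $(y_s(v))_{v\in B_\ell(u)}$. A natural choice is a local martingale-like functional: a sum over $v\in B_\ell(u)$ of boundary-influence differences. Its expectation under the SL dynamics is controlled by its value at time $0$, namely $\sum_{v}(1+\mathbf 1_{v=u})\delta_0(v,\ell)$. The factor $2$ at $v=u$ comes from the drift term $t\sigma^*_u$ acting on the spin $u$ itself.

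\textbf{Step 2 (one-ball estimate).} We show $\mathbb{E}[D_t(u,\ell)\mid h]\le \sum_{v\in B_\ell(u)}(1+\mathbf 1_{v=u})\delta_0(v,\ell)$. To do this, we differentiate $\delta_t$ in $t$ via Itô's formula, writing $d\nu^{\pm}_t(\sigma_u)=\sum_v \langle\sigma_u;\sigma_v\rangle^{\pm}_t\,dy_t(v)$. The drift $a_t(v)\,dt$ is then compared between the $+$ and $-$ boundary measures using monotonicity: under $+$ boundary the relevant covariances dominate, by Griffiths/FKG-type inequalities for ferromagnets. The resulting drift is bounded by influences of the form $\delta(v,\ell)$, which can be integrated, or else expressed as a supermartingale argument yielding $\mathbb{E}\,\delta_t\le$ the initial sum. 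This is where the specific ``$1+\mathbf 1_{v=u}$'' weights should emerge.

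\textbf{Step 3 (factorization).} The separation condition \eqref{separationcondition} makes the balls $B_{\ell_i}(u_i)$, $i\in A$, pairwise far apart: they are disjoint and at distance at least $\ell_i+\ell_j$. By Lemma \ref{lemma856}, $y_t=t\sigma^*+\bar B_t$, where the Brownian parts are independent across vertices. The dependence between balls comes only through $\sigma^*\sim\mu_G$. Each $D_t(u_i,\ell_i)$ is monotone in $y_t$ restricted to its ball, so we use conditional independence given the boundary and the Markov property of $\mu_G$. We pin $\sigma^*$ on $\partial B_{\ell_i}(u_i)$ to the worst case, i.e. the $\pm$ boundary, which is absorbed in the definition of $\delta_0(\cdot,\ell_i)$. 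After this pinning the factors decouple, and the expectation of the product is at most the product of single-ball bounds from Step 2.

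The main obstacle is Step 2 combined with the worst-case pinning in Step 3. We must ensure that the bound on each ball survives conditioning on the boundary of $\sigma^*$, which requires the single-ball estimate to hold uniformly in the boundary condition. We would handle this by proving Step 2 for the localization of the ball measure with arbitrary boundary, using the monotone coupling as in \cite{el2026fast}, Lemma 4.2.
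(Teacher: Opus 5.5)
There is a genuine gap, although your skeleton is sound. By the representation $y_t=t\sigma^*+\bar B_t$, each $\delta_t(u_i,\ell_i)$ depends only on $y_t$ restricted to $B_{\ell_i}(u_i)$. So, conditionally on $\sigma^*$, the factors are independent, and the left-hand side equals $\nu_0(\prod_{i\in A}X_i)$ with $X_i:=\mathbb{E}[\delta_t(u_i,\ell_i)\mid\sigma^*,h]\in[0,1]$, a function of $\sigma^*$ on $B_{\ell_i}(u_i)$. Since the $X_i$ are nonnegative, the Markov property of $\mu_G$ lets you peel off one ball at a time (taking $1\in A$): $\nu_0(\prod_{i\in A}X_i)\le\sup_\tau\nu_0(X_1\mid\sigma^*_{\partial B_{\ell_1}(u_1)}=\tau)\,\nu_0(\prod_{i\in A\setminus\{1\}}X_i)$.

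The gap is that everything then hinges on bounding $\sup_\tau\nu_0(X_1\mid\sigma^*_{\partial B_{\ell_1}(u_1)}=\tau)$, and your proposal does not do this.
\begin{itemize}
\item Your $D_t$ is never defined.
\item $\delta_t(u,\ell)$ is not monotone in $y_t$: it tends to $0$ as the field at $u$ tends to $+\infty$ or to $-\infty$.
\item The worst case over $\tau$ is neither evidently the $\pm$ boundary nor ``absorbed in the definition of $\delta_0$''. The quantity $\delta_0(v,\ell)$ is a time-zero boundary influence of the Gibbs measure. By contrast, $\nu_0(X_1\mid\tau)$ is an expected time-$t$ influence under a localization driven by a \emph{pinned} $\sigma^*$, which is a different object and needs its own argument.
\end{itemize}
Your account of the weights is also incorrect: neither the sum over $v\in B_\ell(u)$ nor the extra $\mathbf{1}_{v=u}$ comes from the drift $t\sigma^*_u$. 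The one-ball estimate the paper imports is simply $\mathbb{E}[\delta_t(u,\ell)\mid h]\le\delta_0(u,\ell)$. Let $\langle\cdot\rangle^{+}_t$ denote the $+$-boundary ball measure with field $h+y_t$, and let $a_t$ be the mean of $\nu_t$. The Itô drift of $\langle\sigma_u\rangle^{+}_t$ equals $\sum_{v}\langle\sigma_u;\sigma_v\rangle^{+}_t\,(a_t(v)-\langle\sigma_v\rangle^{+}_t)$. This is $\le 0$ because FKG gives both $\langle\sigma_u;\sigma_v\rangle^{+}_t\ge 0$ and $a_t(v)\le\langle\sigma_v\rangle^{+}_t$. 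The $-$ boundary is handled symmetrically.

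The paper fills the gap by conditioning instead on $\sigma^*$ on $\partial B_{2\ell_1}(u_1)$; this is where the factor $2$ in the separation condition is used. It then writes $g(\tau):=\nu_0(X_1\mid\tau)\le\nu_0(X_1)+\sup_{\tau,\tau'}(g(\tau)-g(\tau'))$.
\begin{itemize}
\item The mean $\nu_0(X_1)$ is at most $\delta_0(u_1,\ell_1)$ by the one-ball estimate.
\item The oscillation is bounded using only $0\le X_1\le 1$. It is at most the probability that a monotone coupling of the $+$ and $-$ boundary measures on $B_{2\ell_1}(u_1)$ disagrees somewhere in $B_{\ell_1}(u_1)$. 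Since $B_{\ell_1}(v)\subseteq B_{2\ell_1}(u_1)$, this is at most $\sum_{v\in B_{\ell_1}(u_1)}\delta_0(v,\ell_1)$.
\end{itemize}
Adding the two gives exactly $\sum_{v}(1+\mathbf{1}_{v=u_1})\delta_0(v,\ell_1)$.

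Your pinning route can also be completed, and would even give the stronger bound $\prod_{i\in A}\delta_0(u_i,\ell_i)$. Suppose $\sigma^*$ restricted to $B_{\ell}(u)$ is drawn from the ball measure with boundary $\tau$. Then the innovation drift of $y_t$ is the mean of the $\tau$-boundary ball measure with field $h+y_t$. FKG sandwiches this mean between the $-$ and $+$ means. Hence the drift computation above keeps its sign for every $\tau$, and $\sup_\tau\nu_0(X_1\mid\sigma^*_{\partial B_\ell(u)}=\tau)\le\delta_0(u,\ell)$. But this uniform-in-$\tau$ supermartingale statement is precisely the missing step, and your proposal neither formulates it correctly nor proves it.
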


\begin{proof}
This Proposition is proven in exactly the same way as in \cite{el2026fast}, Proposition 4.2. We only need to replace the $\ell_\infty$ metric ball there by the ball $B(u,\ell)$ of word metric, since the graph geodesic distance perfectly satisfies the triangle inequality. In the following, we give a sketch of proof and leave the technical details to \cite{el2026fast}, Section 4.

We first verify via FKG and the SDE evolution of the $\operatorname{SL}$ paths that (see \cite{el2026fast}, Lemma 4.4),
$$
\mathbb{E}[\delta_t(u,\ell)]\leq \delta_0(u,\ell).
$$Recall that the external field $y_t$ has the distribution $t\sigma^*+B_t$ where $\sigma^*\sim\nu_0$ and that $(B_t)$ is a standard Brownian motion. We write $X_i=\mathbb{E}[\delta_t(u_i,\ell_i)\mid\sigma^*,h]$, then $X_i$ is measurable with respect to $\{\sigma_v^*,h_v:v\in B_{\ell_i}(u_i)\}$. By assumption the balls $B_{\ell_i}(u_i)$ are disjoint, so the left hand side of \eqref{nowwepeeloff} is equal to $\nu_0(\prod_{i\in A}X_i
)$.

Now we iteratively peel off elements in $A$. Assume without loss of generality that $1\in A$, then since $d(u_1,u_i)\geq 2\ell_1+\ell_i$, the balls $B_{2\ell_1}(u_1)$ and $B_{\ell_i}(u_i)$ are disjoint for all $i\in A,i\neq 1$ so that we can check 
$$\begin{aligned}&
\nu_0(\prod_{i\in A}X_i)-\nu_0(X_1)\cdot\nu_0(\prod_{i\in A\setminus\{1\}}X_i)\\&\leq\sup_{\tau,\tau'}(\nu_0(X_1\mid \sigma^*_{\partial B_{2\ell_1}(u_1)}=\tau)-\nu_0(X_1\mid\sigma^*_{\partial B_{2\ell_1}(u_1)}=\tau'))\cdot\nu_0(\prod_{i\in A\setminus\{1\}}X_i).
\end{aligned}$$
Applying FKG inequality, we see that the supremum term is bounded from above by 
$$
\sup_{\tau,\tau'}(\nu_0(X_1\mid \sigma^*_{\partial B_{2\ell_1}(u_1)}=\tau)-\nu_0(X_1\mid\sigma^*_{\partial B_{2\ell_1}(u_1)}=\tau'))\leq \sum_{v\in B(u_1,\ell_1)}\delta_0(v,\ell_1).
$$Iteratively peeling off other elements in $A$ completes the proof.
\end{proof}

To estimate $\mathbb{E}[\operatorname{Tr}A_t^p]$ via the expansion \eqref{tracemomentexpansions}, for any given pairs of indices $(u_1,\cdots,u_p)$ we need to determine a subset $A\subset [p]$ for which we really use the estimate $\operatorname{WSM}(C)$
centered at $u_i,i\in A$ (and upper bound the other terms not indexed by $A$ simply by 1). Also, from $(u_1,\cdots,u_p)$, we determine $\ell_1,\cdots,\ell_p$ for which the separation condition \eqref{separationcondition} holds within $A$, so that we can use Proposition \ref{propositiononline869}. More formally, we need a map $A:(u_1,\cdots,u_p)\in G^p\mapsto A(u_1,\cdots,u_p)\subset[p]$, and a map $L:(u_1,\cdots,u_p)\in G^p\mapsto (\ell_1,\cdots,\ell_p)\in\mathbb{N}^p$, where we take $\ell_i=(L(u_1,\cdots,u_p))_i,i\in[p]$. Then we write
$$
\Gamma(A,L):=\sum_{u_1,\cdots,u_p\in V}\prod_{i\in A(u_1,\cdots,u_p)}\left(
\sum_{v\in B_{\ell_i}(u_i)}(1+\mathbf{1}_{v=u_i})\delta_0(v,\ell_i)
\right).
$$ The implicit constraint is that $(A,L)$ should satisfy \eqref{separationcondition}. 

The main combinatorial counting result is the following:
\begin{Proposition}\label{themaincombinatorial}
    Suppose $G$ has $\alpha$-stretched-exponential growth for some $\alpha\in(0,1)$ and the RFIM measure $\mu_G$ satisfies $\operatorname{WSM}(C)$ for some $C>0$. Then we can construct two maps $A$ and $L$ as above, satisfying \eqref{separationcondition} such that, for a constant $C_0=C_0(\alpha,C_\alpha,C)$,
    \begin{equation}
        \mathbb{E}_h[\Gamma(A,L)]\leq C_0^pp!|V|.
    \end{equation}
\end{Proposition}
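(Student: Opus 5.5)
The plan is to adapt the combinatorial scheme of \cite{el2026fast}, Section 4 to $G$. The only properties of $\mathbb{Z}^d$ that scheme needs are the triangle inequality, which the graph distance has, and a volume bound, which we replace by $|B_r(v)|\le \exp(C_\alpha r^\alpha)$. The key observation is that, since $\alpha<1$, for every $C>0$ and $c\in(0,1)$ the series
\[
S(c):=\sum_{r\ge 0}\exp\bigl(C_\alpha r^\alpha+C_\alpha (cr)^\alpha-cr/C\bigr)
\]
is finite and depends only on $(\alpha,C_\alpha,C,c)$. This is the step where polynomial growth is no longer needed: sub-exponential volume is beaten by the exponential decay in $\operatorname{WSM}(C)$, with no threshold on $C$.

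\textbf{Construction of $A$ and $L$.} Write $d_i:=d(u_i,u_{i-1})$ with $u_0=u_p$, and set $\ell_i:=\lfloor d_i/8\rfloor$. Order the indices $i\in[p]$ by decreasing $d_i$, breaking ties by index. Run a greedy selection: put $i$ into $A$ if $d(u_i,u_j)\ge 2(\ell_i+\ell_j)$ for every $j$ already placed in $A$. Indices with $\ell_i=0$ are never selected. By construction \eqref{separationcondition} holds on $A$, and $d_i\ge \ell_i$ is automatic. Every rejected index $i$ with $\ell_i\ge1$ has a \emph{parent} $j\in A$, chosen earlier, with $\ell_j\ge \ell_i$ and $d(u_i,u_j)<4\ell_j$.

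\textbf{Taking $\mathbb{E}_h$ and counting.} For a fixed tuple, the factor attached to $i\in A$ is a function of $(h_v)_{v\in B_{2\ell_i}(u_i)}$, since $\delta_0(v,\ell_i)$ depends only on the field in $B_{\ell_i}(v)$. By \eqref{separationcondition} these enlarged balls are disjoint for distinct $i,j\in A$. Independence of the field then factorizes $\mathbb{E}_h$, and $\operatorname{WSM}(C)$ bounds each factor by $2|B_{\ell_i}(u_i)|Ce^{-\ell_i/C}\le 2C\exp(C_\alpha \ell_i^\alpha-\ell_i/C)$. We then bound $\mathbb{E}_h\Gamma$ by summing over the ``shape'' data and over the vertices:
\begin{itemize}
\item The shape data are the set $A$, the parent map on rejected indices, and the values $d_i$. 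Choosing the set and the parent map gives at most $2^p\,p!$ possibilities.
\item The vertices are summed sequentially along the cycle, with $u_1$ free, giving a factor $|V|$.
\item For $i\in A$, $u_i$ ranges over the sphere of radius $d_i$ around $u_{i-1}$. Summing over $d_i$ gives at most a constant $S(1/8)$ per selected index.
\item For rejected $i$ with $d_i<8$, the cost is at most $\exp(8^\alpha C_\alpha)$.
\item For rejected $i$ with $d_i\ge 8$, we locate $u_i$ inside $B(u_j,4\ell_j)$ around its parent $j$, rather than around $u_{i-1}$.
\end{itemize}
Collecting these gives a bound of the form $C_0^p\,p!\,|V|$.

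\textbf{Main obstacle.} The delicate point is the rejected indices with large $d_i$ when one parent $j$ absorbs many children. With $k$ children the cost is $\exp(kC_\alpha(4\ell_j)^\alpha)$ against the single decay $e^{-\ell_j/C}$. Summing over $\ell_j$ then gives $\exp(c\,k^{1/(1-\alpha)})$, which is superfactorial. My first attempt at a fix is a multiscale refinement in the spirit of the coarse-graining counting of \cite{el2026fast}. Group indices into dyadic scales $2^s\le d_i<2^{s+1}$. Run the greedy selection scale by scale from the top. A child rejected at scale $s$ is charged to a parent ball whose radius is comparable to $2^s$, not to the possibly much larger $\ell_j$. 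When a cluster at scale $s$ is large, promote one of its members to a selected index at a smaller radius $\ell_i\approx 2^{s}/(8k)$ so that it supplies its own decay. The goal is a cost of at most $C_0^k k!$ per cluster of size $k$. If this bookkeeping closes, then summing over clusters and scales, and using $\prod_j k_j!\le p!$, yields $\mathbb{E}_h[\Gamma(A,L)]\le C_0^p p!|V|$.
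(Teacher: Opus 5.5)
\textbf{Verdict: there is a genuine gap.} You correctly diagnose the superfactorial blow-up, but the proposal stops there (``if this bookkeeping closes''). The obstacle comes from your choice of $A$ and $L$, not from the counting. Take $p$ even, $d(x,y)=D$ large, odd-indexed $u_i\in B_{D/32}(x)$ and even-indexed $u_i\in B_{D/32}(y)$. Then every $d_i\in[\tfrac{15}{16}D,\tfrac{17}{16}D]$, so $\ell_i\approx D/8$. Every within-cluster pair violates $d(u_i,u_j)\ge 2(\ell_i+\ell_j)$ and every cross-cluster pair satisfies it. Your greedy therefore selects exactly one index per cluster, and the decay earned is only $e^{-\sum_{i\in A}\ell_i/C}\ge e^{-D/(3C)}$. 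For fixed $x,y$ there are at least $|B_{D/32}(x)|^{p/2}|B_{D/32}(y)|^{p/2}$ such tuples. On a graph whose balls really grow like $e^{cr^\alpha}$, the estimate $\sum_{u_1,\dots,u_p}\prod_{i\in A}2C|B_{\ell_i}|e^{-\ell_i/C}$ (all that $\operatorname{WSM}(C)$ yields) is then of order at least $\max_D\exp\bigl(cp(D/32)^\alpha-D/(3C)\bigr)=\exp(c'p^{1/(1-\alpha)})$ for your $(A,L)$. So $A$ and $L$ must be redefined; your promotion step points that way but is neither specified nor checked.

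There is also a second flaw: placing a selected $u_i$ from $u_{i-1}$ and a rejected $u_i$ from its parent can create dependency cycles that avoid the free vertex $u_1$. On a path with $p=5$ and $u_1,\dots,u_5$ at positions $48,64,0,16,80$, your greedy gives $A=\{1,3,5\}$ with parents $2\mapsto 5$ and $4\mapsto 3$. Then $u_3$ is placed from $u_2$, $u_2$ from $u_5$, $u_5$ from $u_4$, and $u_4$ from $u_3$, so the ``sequential'' summation is not well defined.

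The missing idea, which the paper uses following the construction of \cite{el2026fast}, is to measure each $u_i$ by $r_i=\tfrac14\min_{j<i}d(u_i,u_j)$, the distance to the nearest \emph{earlier} vertex $u_{J_i}$ with $J_i<i$, rather than by $d(u_i,u_{i-1})$. Locating $u_i$ then costs only $e^{C_\alpha(4r_i)^\alpha}$. The maps $i\mapsto J_i$ form a tree rooted at $u_1$, giving at most $p!$ shapes, one factor $|V|$, and no cycles. Clustered vertices become cheap because their $r_i$ are small.

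One then takes $A=Q_{k_*}$, the dyadic class $\{i:r_i\in[2^k,2^{k+1})\}$ maximizing $|Q_k|2^k$, and sets $\ell_i=\tfrac14\min_{j\in\{i-1\}\cup A\setminus\{i\}}d(u_i,u_j)$. Separation is automatic, and $\ell_i\ge r_i/2$ on $A$ because members of $A$ have comparable $r$. The key estimate is a single global budget:
\[
\sum_{i\in Q_{k_*}}r_i\ \ge\ c_\alpha\sum_{i\in[p]}r_i^{\alpha'},\qquad \alpha'=\tfrac{1+\alpha}{2}.
\]
It follows from $\sum_{i\in Q_k}r_i^{\alpha'}\le 2^{-(1-\alpha')k}\sum_{i\in Q_k}r_i$ together with $\max_k\sum_{i\in Q_k}r_i\le 2\sum_{i\in Q_{k_*}}r_i$. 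Because $\alpha'>\alpha$, the decay earned on the one scale $k_*$ pays for the location costs $\sum_i C_\alpha(4r_i)^\alpha$ of \emph{all} indices at once. This yields $C_0^p\,p!\,|V|\bigl(\sum_r e^{-c_0r^{\alpha'}+4^\alpha C_\alpha r^\alpha}\bigr)^p$ with no per-cluster accounting or promotion step.
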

\begin{remark}
    This Proposition is the only place where the property that $G$ has $\alpha$-stretched-exponential growth is used. We use the same construction for $L$ and $A$ as in \cite{el2026fast}, but do a slightly more careful combinatorial counting to cover all $\alpha\in(0,1)$.
\end{remark}

\begin{proof}
    Since the balls $B_{2\ell_i}(u_i)$ are disjoint, we have
    $$\begin{aligned}
\mathbb{E}_h[\Gamma(A,L)]&=\sum_{u_1,\cdots,u_p\in V}\prod_{i\in A(u_1,\cdots,u_p)}\left(\sum_{v\in B_{\ell_i}(u_i)}(1+\mathbf{1}_{v=u_i})\mathbb{E}_h[\delta_0(v,\ell_i)]\right)
\\&\leq \sum_{u_1,\cdots,u_p\in V}\prod_{i\in A(u_1,\cdots,u_p)}(2C\exp(C_\alpha \ell_i^\alpha)e^{-\ell_i/C})
    \end{aligned}$$ where we use the $\alpha$- stretched exponential growth of $G$ in the last step.

    Next we build the maps $A$ and $L$. For a given sequence of vertices $u_1,\cdots,u_p\in G$ and a set $I\subseteq \{1,\cdots,p\}$, we define 
    \begin{equation}
        \ell_i(I):=\frac{1}{4}\min_{j\in\{i-1\}\cup I\setminus\{i\}}d(u_i,u_j),\quad i\in[p].
    \end{equation}We further define, for each $i\in[p]\setminus \{1\}$, 
    $$
r_i=r_i(u_1,\cdots,u_p):=\frac{1}{4}\min_{j<i}d(u_i,u_j),
$$and that for each $i\in[p]\setminus \{1\}$,  
$$
J_i:=J_i(u_1,\cdots,u_p):=\arg\min_{j<i}d(u_i,u_j).
$$(we can assign arbitrary value to $r_1$ and $J_1$).

We next define a set of indices that are relatively close:
$$
Q_k=Q_k(u_1,\cdots,u_p):=\{i\in[p]:r_i\in[2^k,2^{k+1}-1]\},\quad k\geq 0.
$$ Next we let 
$k_*=k_*(u_1,\cdots,u_p):=\arg\max_k|Q_k|2^k.$
Finally, we set 
\begin{equation}
    A(u_1,\cdots,u_p):=Q_{k_*},\quad\text{and}\quad\ell_i(u_1,\cdots,u_p):=\ell_i(A(u_1,\cdots,u_p)).
\end{equation}Then we verify the separation condition \eqref{separationcondition}, which is $d(u_i,u_{i-1})\geq\ell_i,d(u_i,u_j)\geq 2(\ell_i+\ell_j)$ whenever $i,j\in Q_{k_*},i\neq j$. The above definition implies that 
\begin{equation}\label{elli12ri}
    \ell_i\geq\frac{1}{2}r_i,\forall i\in Q_{k_*},
\end{equation} since if we suppose that $\ell_i<\frac{1}{2}r_i$, then we can find $j>i,j\in Q_{k_*}$ such that $d(u_i,u_j)\leq\frac{1}{2}r_i$. Since $j\in Q_{k_*}$ also, we must have $r_j\geq\frac{1}{2}r_i$. Then this would contradict $r_j\leq d(u_i,u_j)$. Therefore we upper bound 
\begin{equation}
    \prod_{i\in Q_{k_*}}2C\exp(C_\alpha\ell_i^\alpha)e^{-\ell_i/C}\leq c^p\prod_{i\in Q_{k_*}}e^{-\ell_i/2C}\leq c^pe^{-\sum_{i\in Q_{k_*}}r_i/4C},
\end{equation}where we take $c=(2C)\sup_{x>0}\exp(C_\alpha x^\alpha)e^{-x/2C}$ and use \eqref{elli12ri} for the second inequality.

Now we let $\alpha'=\frac{\alpha+1}{2}$, and we check that there exists an $\alpha$-dependent constant $c_{0,\alpha}>0$ such that 
$
\sum_{i\in Q_{k_*}}r_i\geq c_{0,\alpha}\sum_{i\in [p]}(r_i)^{\alpha'}.
$ Indeed, 
$$
\begin{aligned}&
\sum_{i\in[p]}(r_i)^{\alpha'}=\sum_{k\geq 0}\sum_{i\in Q_k}(r_i)^{\alpha'}\\&\leq\sum_{k\geq 0}2^{-(1-\alpha')k} \sum_{i\in Q_k}r_i\leq (\sum_{k\geq 0}2^{-(1-\alpha')k})\max_k\sum_{i\in Q_k}r_i,
\end{aligned}
$$and that 
$$\begin{aligned}&
\max_k\sum_{i\in Q_k}r_i\leq\max_k|Q_k|2^{k+1}=2^{k_*+1}|Q_{k_*}|\leq 2\sum_{i\in Q_{k_*}}r_i.\end{aligned}
$$

Therefore we get 
\begin{equation}\label{thereforewegets}
    \mathbb{E}_h[\Gamma(A,L)]\leq C_0^p\sum_{u_1,\cdots,u_p\in V} \prod_{i=1}^pe^{-c_{0}(r_i)^{\alpha'}} 
\end{equation} for some $C_0,c_0$ depending only on $\alpha,C_\alpha$ and $C$.
Now, there are at most $p!$ different options for the choice of the indices $J_1,\cdots,J_i,\cdots,J_p$. Given the choice of $J_i's$ and different choices of $r_i$, we have at most $|V|\prod_{i\in\{1,\cdots,p\}}\exp(C_\alpha (4r_i)^\alpha)$ different configurations to choose from. (Since we determine the position of $u_1$ and then given $r_i$ and $J_i$, the point $u_i$ is on the boundary of $B_{4r_i}(u_{J_i})$. Since $J_i<i$, those offsets determine the entire configuration. Altogether, the right hand side of \eqref{thereforewegets} is upper bounded by 
$$\begin{aligned}\mathbb{E}_h[\Gamma(A,L)]&\leq 
C_0^p|V|p!\sum_{r_1,\cdots,r_p\in\mathbb{N}}\prod_{i\in[p]}\exp(C_\alpha (4r_i)^\alpha)e^{-c_0(r_i)^{\alpha'}}\\&=C_0^p(\sum_{r=0}^\infty e^{-c_0r^{\alpha'}+4^\alpha C_\alpha r^\alpha})^pp!|V|,
\end{aligned}$$where we recall that $\alpha'>\alpha$. This completes the proof.
\end{proof}

Now, the proof of Theorem \ref{theorem83777} is immediate:

\begin{proof}[\proofname\ of Theorem \ref{theorem83777}]
    This immediately follows from combining \eqref{tracemomentexpansions} with Proposition \ref{propositiononline869} and Proposition \ref{themaincombinatorial}.
\end{proof}

\section*{Funding}
The author receives a fellowship from IAS provided by the S.S. Chern Foundation for Mathematical Research Fund and the Fund for Mathematics.

\printbibliography

\end{document}